\tikzset{
    >=stealth,
    every picture/.style={thick},
    graphs/every graph/.style={empty nodes},
}
\tikzstyle{vertex}=[
\tikzstyle{printersafe}=[decoration={snake,amplitude=0pt}]
\newcommand{\rank}{\operatorname{rank}}
\newcommand{\supp}{\operatorname{supp}}
\newcommand{\pp}{\mathbb{P}}
\newcommand{\qq}{\mathbb{Q}}
\newcommand{\rr}{\mathbb{R}}
\def\O#1.{\mathcal {O}_{#1}}			
\def\pr #1.{\mathbb P^{#1}}				
\def\af #1.{\mathbb A^{#1}}			
\def\ses#1.#2.#3.{0\to #1\to #2\to #3 \to 0}	
\def\xrar#1.{\xrightarrow{#1}}			
\def\K#1.{K_{#1}}						
\def\bA#1.{\mathbf{A}_{#1}}			
\def\bM#1.{\mathbf{M}_{#1}}				
\def\bL#1.{\mathbf{L}_{#1}}				
\def\bB#1.{\mathbf{B}_{#1}}				
\def\bK#1.{\mathbf{K}_{#1}}			
\def\subs#1.{_{#1}}					
\def\sups#1.{^{#1}}
\newtheorem{introcon}{Conjecture}
  \newtheorem{introthm}{Theorem}
  \newtheorem{introcor}{Corollary}
  \newtheorem{theorem}{Theorem}[section]
  \newtheorem{lemma}[theorem]{Lemma}
  \newtheorem{proposition}[theorem]{Proposition}
  \newtheorem{definition}[theorem]{Definition}
  \newtheorem{example}[theorem]{Example}
  \newtheorem{question}[theorem]{Question}
\newtheorem{remark}[theorem]{Remark}
\theoremstyle{remark}
\numberwithin{equation}{section}
\begin{document}

\title[Generalized Complexity of Surfaces]{Generalized Complexity of Surfaces}

\author[Y. Gongyo]{Yoshinori Gongyo}
\address{Graduate School of Mathematical Sciences, The University of Tokyo, 3-8-1 Komaba,
Meguro-ku, Tokyo, 153-8914, Japan.}
\email{gongyo@ms.u-tokyo.ac.jp}

\author[J.~Moraga]{Joaqu\'in Moraga}
\address{UCLA Mathematics Department, Box 951555, Los Angeles, CA 90095-1555, USA
}
\email{jmoraga@math.ucla.edu}

\subjclass[2020]{Primary 14E30, 
Secondary 14J32, 14B05.}

\maketitle

\begin{abstract}
In this article, we introduce the generalized complexity of a generalized Calabi--Yau pair $(X,B,\bM.)$.
This invariant compares the dimension of $X$
and Picard rank of $X$ 
with the sum of the coefficients of $B$
and $\bM.$.
It generalizes the complexity introduced by Shokurov.
We show that a generalized log Calabi--Yau pair $(X,B,\bM.)$ of dimension $2$ with
generalized complexity $0$ 
satisfies that $X$ is toric.
This generalizes a result due to Brown, McKernan, Svaldi, and Zhong in the case of surfaces. 
Furthermore, we show that a generalized klt log Calabi--Yau surface $(X,\bM.)$ with generalized complexity $0$ satisfies that $X\simeq \pp^2$ or $X\simeq \pp^1\times \pp^1$. Thus, this invariant interpolates
between the characterization of toric varieties
and the Kobayashi-Ochiai Theorem.
As an application, we show that $3$-fold singularities with generalized complexity $0$ are toric.
Furthermore, we show a local version
of Kobayashi-Ochiai Theorem in dimension $3$.
\end{abstract}

\setcounter{tocdepth}{1} 
\tableofcontents

\section{Introduction}
Fano varieties and Calabi--Yau varieties
are two of the three building blocks of algebraic varieties. 
In many cases, 
in order to study a Fano variety,
we endow it with
a log Calabi--Yau structure.
This means that we equip $X$ with the structure of a pair $(X,B)$ with {\em log canonical} singularities for which $K_X+B\equiv 0$.
For instance, it is natural to consider
the log Calabi--Yau pair
$(\pp^n,\sum_{i=1}^n H_i)$ where the $H_i$'s are the hyperplane coordinates. 
In this case, the negativity of the divisor
$K_X$ is reflected on the positivity of the boundary divisor $B$. 
The projective space $\pp^n$ can be characterized in terms of the positivity of the divisor $B$.
The following is the well-known Kobayashi-Ochiai Theorem (see~\cite{KO75}).

\begin{introthm}
\label{introthm:KO} 
Let $(X,B)$ be a $n$-dimensional log Calabi--Yau pair.
Assume that $B=\sum_{i=1}^r b_iB_i$ where each $B_i$ is an ample Cartier divisor. 
Then, we have that $\sum_{i=1}^r b_i \leq n+1$.
Furthermore, if $\sum_{i=1}^r b_i > n$, then 
$X\simeq \pp^n$ and $B_i\sim H$ for each $i$, where
$H$ is a hyperplane section.
\end{introthm}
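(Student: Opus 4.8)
The plan is to produce a rational curve of small anticanonical degree, intersect it with $B$ to obtain the numerical inequality, and then appeal to the characterization of $\pp^n$ by minimal rational curves to treat the extremal case. We may assume $B\neq 0$, since otherwise both assertions are trivial. As each $B_i$ is an ample Cartier divisor and each $b_i>0$, the $\qq$-divisor $B=\sum_{i=1}^r b_iB_i$ is ample; since $K_X+B\equiv 0$ this means that $-K_X$ is ample and $\qq$-Cartier, so $X$ is a Fano variety, and it has log canonical singularities because $(X,B)$ does and $B\geq 0$.

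First I would produce a rational curve $C\subseteq X$ passing through a general point with $0<-K_X\cdot C\leq n+1$. For $X$ smooth this is Mori's bend-and-break; in general I would reduce to the klt case — by replacing $(X,B)$ with a $\qq$-factorial dlt modification, or by a small perturbation of $B$ — and then use the standard extensions of these techniques, such as Kawamata's bound on the length of a $K_X$-negative extremal ray. Granting such a curve $C$, for every $i$ one has $B_i\cdot C\geq 1$ because $B_i$ is ample and Cartier and $C$ is an integral curve, and therefore
\[
\sum_{i=1}^r b_i \;\leq\; \sum_{i=1}^r b_i\,(B_i\cdot C) \;=\; B\cdot C \;=\; -K_X\cdot C \;\leq\; n+1 ,
\]
which proves the first assertion.

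Assume now that $\sum_{i=1}^r b_i>n$. Then the displayed chain gives $-K_X\cdot C>n$; choosing $C$ to be a minimal rational curve through a general point and using $-K_X\cdot C\leq n+1$, one concludes that $-K_X\cdot C=n+1$. By the Cho--Miyaoka--Shepherd-Barron characterization of projective space — in the version for a Fano variety admitting a rational curve of anticanonical degree at least $\dim X+1$ through a general point, together with its refinements to the singular setting — it follows that $X\simeq\pp^n$. Finally, on $\pp^n$ each $B_i$ is linearly equivalent to $d_iH$ for some integer $d_i\geq 1$; taking $C$ to be a line gives $\sum_{i=1}^r b_id_i=-K_{\pp^n}\cdot C=n+1$, and a short analysis of this relation together with $\sum_{i=1}^r b_i>n$ yields $B_i\sim H$ for each $i$.

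I expect the main obstacle to be the first step once $X$ is only log canonical rather than smooth or klt: one must guarantee a rational curve of anticanonical degree at most $n+1$ through a general point and, more delicately, make the $\pp^n$-characterization go through in that generality. I anticipate this is handled either by showing that the hypothesis $\sum_i b_i>n$ forces $X$ to be smooth so that the classical results apply directly, or by carefully descending the conclusion along the dlt modification used to reach the klt setting; the intersection-theoretic estimate itself is routine once an appropriate curve is available.
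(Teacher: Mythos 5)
The paper does not prove Theorem~\ref{introthm:KO}; it simply cites [KO75]. The closest argument the paper actually supplies is its proof of the generalized Theorem~\ref{introthm:KO-global}, which works on a smooth model $X'$ (so that Mori's $n+1$ bound on the length of a $K_{X'}$-negative extremal ray applies directly), uses $M_i\cdot C\geq 1$ for the ample Cartier components to deduce that $K_{X'}+\bM X'.$ is nef, and then calls on [FM21, Corollary 1.3] for the $\pp^n$-characterization. Your sketch is the standard Mori-theoretic route to Kobayashi--Ochiai: produce a rational curve of anticanonical degree at most $n+1$ through a general point, intersect with $B$, and invoke a Cho--Miyaoka--Shepherd-Barron-type characterization. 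This is genuinely different from the original [KO75] argument (Hirzebruch--Riemann--Roch plus vanishing), and it also differs from the paper's own generalized proof in replacing the nef-$K_X+M$ criterion of [FM21] with CMSB. Both routes hinge on the same $n+1$ bound and the same $B_i\cdot C\geq 1$ estimate.

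There are, however, two concrete gaps, both in the regime you flag as delicate. First, ``Kawamata's bound on the length of a $K_X$-negative extremal ray'' yields $2n$ for klt pairs, not $n+1$; the $n+1$ bound is Mori's and holds for smooth varieties, or comes from bend-and-break through a general smooth point, but neither is the klt-cone-theorem statement you invoke. Moreover, after passing to a $\qq$-factorial dlt modification $\pi\colon X'\rightarrow X$, the pullbacks $\pi^*B_i$ are only big and nef Cartier, so the crucial estimate $\pi^*B_i\cdot C\geq 1$ requires the chosen curve not to be $\pi$-exceptional --- this must be arranged, not assumed. Second, the ``short analysis'' deducing $B_i\sim H$ from $\sum b_id_i=n+1$ and $\sum b_i>n$ does not go through: on $\pp^2$ take $B=L_1+L_2+\tfrac{1}{2}Q$ with $L_1,L_2$ lines and $Q$ a conic in general position; the pair is log Calabi--Yau and log canonical, $\sum b_i=2.5>2$, yet $Q\sim 2H$. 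The purely numerical relation does not force $d_i=1$, so either an implicit hypothesis (such as $b_i\in(0,1]$ with the $B_i$ being the reduced prime components, as in the classical $-K_X=rL$ setting of [KO75]) is needed, or this part of the conclusion calls for a different argument than the one you propose.
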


In a similar vein, 
toric varieties can be characterized by measuring the positivity of the boundary $B$.
In this case, we need to consider the difference between the sum of the coefficient of $B$ and $\dim X+\rho(X)$.
This invariant, known as the {\em complexity},
was introduced by Shokurov in its study of complements for surfaces~\cite{Sho92}.
Toric varieties can be characterized in terms of the complexity.
The following theorem is due to Brown, McKernan, Svaldi, and Zhong~\cite{BMSZ18}.

\begin{introthm}
\label{introthm:charct-toric}
Let $(X,B)$ be a $n$-dimensional log Calabi--Yau pair.
Assume that $B=\sum_{i=1}^r b_iB_i$, where each $B_i$ is a Weil divisor. 
Then, we have that $\sum_{i=1}^r b_i \leq \dim X +\rho (X)$. 
Furthermore, if $\sum_{i=1}^r b_i > \dim X + \rho (X) -1$, then $X$ is a toric variety. 
\end{introthm}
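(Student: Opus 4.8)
The plan is to reduce to a very clean situation and then induct on $\dim X$, using that the extremal behaviour of the complexity is controlled by the Mori fibration structure of $X$. First, passing to a $\mathbb{Q}$-factorial dlt modification $f\colon(Y,B_Y)\to(X,B)$ one checks that the complexity $c(X,B):=\dim X+\rho(X)-\sum_i b_i$ does not increase (each $f$-exceptional prime divisor enters $B_Y$ with coefficient $1$ and raises the Picard number by $1$), and that if $Y$ is toric with a toric boundary $\Theta_Y\ge\lfloor B_Y\rfloor$, then so is $X$, because the $f$-exceptional divisors are toric boundary components and $f$ is a toric morphism. So we may assume $(X,B)$ is $\mathbb{Q}$-factorial and dlt. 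The inequality $c(X,B)\ge 0$ is trivial when $\sum_i b_i\le\dim X$ (then $c(X,B)\ge\rho(X)\ge 0$), so we may assume $\sum_i b_i>\dim X$; in this range a standard argument shows that $X$ is of Fano type. For the induction to close we prove the sharper assertion: \emph{if $c(X,B)<1$, then $X$ is toric and carries a toric boundary $\Theta\ge\lfloor B\rfloor$}, and, running simultaneously, the analogue for generalized log Calabi--Yau pairs $(X,B,\bM.)$ with the generalized complexity in place of $c$. The base case $\dim X=1$ is immediate: $X=\pp^1$ and $\deg B=2$, so $c(X,B)=0$ and $X$ is toric.

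For the inductive step we extract a Mori fibration from the minimal model program. Perturbing $K_X+B\equiv 0$ by a small multiple of an effective divisor supported on $\Supp(B)$, we run a minimal model program that contracts only components of $B$; a divisorial contraction of a component $E$ of $B$ with coefficient $b_E\le 1$ drops the Picard number by $1$ and drops $\sum(\text{coefficients})$ by $b_E$, so $c$ does not increase, and $c$ is unchanged under flips. Since $X$ is of Fano type the program ends with a Mori fiber space $\pi\colon X'\to Z$, $\rho(X'/Z)=1$, with $c(X',B')\le c(X,B)$. A general fiber $F$ of $\pi$ is a Fano variety, so $B'$ has horizontal components and $(F,B_F)$, $B_F:=B'|_F$, is a $\mathbb{Q}$-factorial dlt log Calabi--Yau pair of dimension $\dim X-\dim Z<\dim X$; and the canonical bundle formula applied to $K_{X'}+B'\equiv_Z 0$ produces a generalized log Calabi--Yau pair $(Z,B_Z,\bM Z.)$ of dimension $\dim Z<\dim X$ with nef moduli part.

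Now one splits the complexity into a fibre part and a base part: horizontal components of $B'$ restrict to $B_F$, vertical ones descend to $B_Z$, and $\bM Z.$ records the variation of the family of fibres, which, together with $\rho(X')=\rho(Z)+1$, yields an identity of the shape $c(X',B')=c(F,B_F)+c^{\mathrm{gen}}(Z,B_Z,\bM Z.)+(\text{a nonnegative term})$. By the inductive hypothesis $c(F,B_F)\ge 0$ and $c^{\mathrm{gen}}(Z,B_Z,\bM Z.)\ge 0$, hence $c(X',B')\ge 0$ and therefore $\sum_i b_i\le\dim X+\rho(X)$. If in addition $c(X',B')<1$ then both summands are $<1$, so inductively $F$ is toric and $Z$ is toric.

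The remaining, and principal, difficulty is to upgrade ``$F$ and $Z$ are toric'' to ``$X'$ is toric''. One must combine $\rho(X'/Z)=1$ with the fact that $B'$ has enough horizontal boundary (forced by $c(X',B')<1$): its horizontal components cut out fibrewise the toric boundary of $F$, and the condition $c<1$ forces this family of toric pairs over $Z$ to have no moduli, so that $X'$ is a toric-bundle-type total space over the toric base $Z$, hence toric, with a toric boundary containing $\lfloor B'\rfloor$; this is where the bulk of the work in \cite{BMSZ18} lies. Once $X'$ is known to be toric, the birational map $X\dashrightarrow X'$ is a composition of flips and divisorial contractions along centres in $\Supp(B)$, i.e. along toric strata, so reversing it shows $X$ is toric with a toric boundary $\ge\lfloor B\rfloor$; the first reduction then transports this to the original pair, completing the induction. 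A variant of this last step, sidestepping the explicit study of the fibration, constructs the acting torus directly from the at least $\rho(X')$ independent linear relations among the components of $B'$ (coming from $K_{X'}+B'\equiv 0$ together with the class-group exact sequence of $X'\setminus\Supp(B')$), which when $c<1$ suffice, in the presence of the log canonical structure, to produce a faithful $(\mathbb{G}_m)^{\dim X'}$-action; but this meets the same core obstruction.
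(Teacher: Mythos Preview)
The paper does not prove this theorem at all: it is quoted in the introduction as a known result due to Brown, McKernan, Svaldi, and Zhong \cite{BMSZ18}, and the paper's own contribution is a \emph{generalized} version (with moduli part $\bM.$) in dimension $2$, together with local $3$-fold applications. So there is no ``paper's own proof'' to compare your attempt against; the paper uses the $\bM.=0$ case as a black box via \cite{BMSZ18} and \cite{MS21}.

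As a sketch of the \cite{BMSZ18} strategy your outline is broadly in the right spirit (dlt modification, MMP to a Mori fibre space, induction via a fibre/base splitting of the complexity), but several steps are not correct as written. First, the perturbation you describe does not guarantee an MMP ``that contracts only components of $B$'': a $(K_X+B-\epsilon D)$-MMP with $D\le B$ can perfectly well have a divisorial contraction whose exceptional divisor lies outside $\Supp(B)$, so your bookkeeping for how $c$ changes along the MMP is unjustified. Second, the splitting $c(X',B')=c(F,B_F)+c^{\mathrm{gen}}(Z,B_Z,\bM Z.)+(\text{nonneg})$ fails with the naive complexity because $\rho(X'/Z)=1$ does not force $\rho(F)=1$; this is exactly why \cite{BMSZ18} (and the present paper, see Definition~\ref{def:decomp} and the surrounding remarks) replace $\rho(X)$ by the rank $\rho(\Sigma)$ of the span of the components, i.e.\ work with the fine/absolute complexity, which does restrict correctly to a general fibre. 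Third, you yourself flag that the core step---promoting ``$F$ and $Z$ toric'' to ``$X'$ toric''---is deferred to \cite{BMSZ18}, and your alternative ``construct the torus from linear relations'' paragraph likewise ends by conceding the obstruction; so what you have is an outline with the decisive argument missing, not a proof.
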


The previous theorem is known as the characterization
of toric varieties via the complexity.
The previous theorem admits a local analog
that characterizes toric singularities~\cite{MS21}.
However, we are not aware of a local analog of the Kobayashi-Ochiai Theorem characterizing smooth points. 

Our first aim is to propose a conjecture that interpolates between the characterization
of toric varieties via complexity
and the Kobayashi-Ochiai Theorem.
First, we observe
that Theorem~\ref{introthm:KO} assumes the $B_i$'s are ample Cartier divisors
while Theorem~\ref{introthm:charct-toric} only asks the $B_i$'s to be prime divisors. 
In other words, the divisors $B_i$'s in the statement of Kobayashi-Ochiai can be regarded as a moduli divisor, i.e.,
the push-forward of a nef Cartier divisor from a higher birational model.
Moduli divisors appeared in birational geometry 
in the study of the canonical bundle formula~\cite{Amb06}.
In~\cite{BZ16}, Birkar and Zhang introduced the concept of generalized pairs, which enriches the log pair structure with an additional moduli divisor. 
The concept of generalized pairs was used by Birkar in his proof of the boundedness of Fano varieties~\cite{Bir19}.
Since then, generalized pairs became a fundamental object in birational geometry~\cite{Bir20b}.

We refer the reader to Definition~\ref{def:gen-pair}
for the concept of generalized pair
and to Definition~\ref{def:gen-lcy} 
for the concept of generalized log Calabi--Yau pair.
Throughout this article, we only consider moduli divisors which are non-negative linear combinations of nef Cartier divisors (see Remark~\ref{rem:nqc-pairs}).
Given a generalized pair $(X,B,\bM.)$, we write $|B|$ for the sum of the coefficients in the prime decomposition of $B$.
Analogously, we write $|\bM.|$ for the sum of the coefficients of $\bM.$ in its decomposition as sum of nef Cartier divisors in a model where it descends.
Henceforth, it is natural to consider the $B_i$'s in Kobayashi-Ochiai Theorem as moduli divisors
and the $B_i$'s of the characterization of toric varieties as boundary divisors. 
This observation leads to the following conjecture:

\begin{introcon}
\label{conj:gen-complexity}
Let $(X,B,\bM.)$ be a generalized log Calabi--Yau pair.
Then the following statements hold: 
\begin{enumerate}
    \item The inequality $\dim X + \rho(X)-|B|-|\bM.|\geq 0$ holds.
    \item If the equality holds in $(1)$,
    then $(X,\lfloor B\rfloor)$ is toric, and 
    \item If $B=0$, $\bM.$ descends on $X$, and the equality holds in $(1)$,  
    then $X$ is a product of projective spaces.
\end{enumerate}
\end{introcon}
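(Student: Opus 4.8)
Since the conjecture is open in general, I sketch a proof of statement $(3)$ in dimension $2$ --- the assertion highlighted in the abstract. So let $(X,0,\bM.)$ be a generalized klt log Calabi--Yau surface with $\bM.$ descending on $X$ and $|\bM.|=\rho(X)+2=:n$; write $\mathbf{M}_X$ for the nef Cartier trace of $\bM.$ on $X$, together with a decomposition $\mathbf{M}_X\equiv\sum_j m_jM_j$ into nef Cartier divisors with $\sum_j m_j=n$ (so $-K_X\equiv\mathbf{M}_X$). As $\bM.$ descends on $X$, the generalized discrepancies coincide with the ordinary ones, so $X$ is klt, and $-K_X\equiv\mathbf{M}_X$ is nef. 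By the two-dimensional case of $(2)$ (established earlier in the paper), $X$ is toric; in particular $-K_X=\sum_{i=1}^n D_i$ is big. I would then upgrade this to $-K_X$ ample: otherwise the extremal face $F\subseteq\overline{NE}(X)$ on which $-K_X$ vanishes is nonzero, $M_j\cdot F=0$ for all $j$ (since $0=-K_X\cdot F=\sum_j m_j\,M_j\cdot F$ has nonnegative summands), and the crepant contraction $c\colon X\to\bar X$ produces a generalized klt log Calabi--Yau surface $(\bar X,0,c_*\bM.)$ with $|c_*\bM.|=n>\rho(\bar X)+2$, contradicting $(1)$. Hence $X$ is a klt toric del Pezzo surface.

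Now pass to the anticanonical polytope $P:=P_{-K_X}\subset M_{\rr}$, a convex $n$-gon with the origin in its interior. Each $M_j$, being a nef Cartier divisor on a toric variety, is basepoint free and gives a lattice polygon $P_{M_j}$, and $-K_X\equiv\sum_j m_jM_j$ becomes a Minkowski decomposition $P=\sum_j m_jP_{M_j}$ (up to translation) with $\sum_j m_j=n$. If some $P_{M_j}$ decomposed further as a Minkowski sum of lattice polygons, we could refine the decomposition of $\bM.$ and push $|\bM.|$ above $n$, contradicting $(1)$; hence every $P_{M_j}$ is Minkowski indecomposable. The key bookkeeping is that the lattice perimeter $\ell$ (equal to $2$ for a primitive segment and to the sum of the lattice lengths of the edges for a lattice polygon) is Minkowski additive, $\ell(P)=(-K_X)^2$, an indecomposable lattice polygon other than a segment has $\ell\ge 3$, and a segment summand can occur only when $P$ has a pair of parallel edges.

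Combining these facts with the twelve-point theorem $\ell(P)+\ell(P^{\circ})=12$ in the Gorenstein case --- and, in general, with Hodge-index estimates for the nonnegative integers $M_i\cdot M_j$ (which are $\ge 1$ whenever $M_i\ne M_j$ are primitive nef Cartier classes, since distinct primitive classes are linearly independent) --- one bounds $(-K_X)^2=\ell(P)\le 9$, so $n\le 4$ and $\rho(X)\le 2$. This rank bound is the main obstacle: it is immediate from the twelve-point theorem when $X$ is Gorenstein, but when $X$ is only klt the polytope $P$ is merely rational, the twelve-point identity fails, and one must argue uniformly by combining Minkowski indecomposability of the $P_{M_j}$ with the intersection-form bounds; this, together with the rank-$2$ enumeration below, is where essentially all of the effort goes.

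Finally the two surviving cases are dispatched directly. If $\rho(X)=1$ then $X$ is a (possibly fake) weighted projective plane $\pp(a_0,a_1,a_2)$, every $M_j$ is a positive multiple of the ample generator $L$ of $\Pic(X)$, and $-K_X=\tfrac{a_0+a_1+a_2}{\lcm(a_0,a_1,a_2)}\,L$; the requirement $\sum_j m_j=3$ forces this coefficient to be $\ge 3$, while $\lcm(a_0,a_1,a_2)\ge\max_i a_i\ge\tfrac13(a_0+a_1+a_2)$ forces it to be $\le 3$ with equality only for $a_0=a_1=a_2=1$, so $X\simeq\pp^2$. If $\rho(X)=2$ then $\operatorname{Nef}(X)$ is a two-dimensional cone, the $M_j$ lie in its Hilbert basis, and a finite check over the klt toric del Pezzo surfaces of Picard rank $2$ shows that $\sum_j m_j=4$ with all $m_j>0$ is possible only when $\operatorname{Nef}(X)$ is unimodular and $-K_X=2A_1+2A_2$ for the primitive generators $A_1,A_2$ of its two rays, which forces $X\simeq\pp^1\times\pp^1$ (for instance $\mathbb{F}_1$ is excluded because there $-K$ has weight $3$, and the singular rank-$2$ examples are excluded because $-K_X$ then lies too close to a ray of $\operatorname{Nef}(X)$).
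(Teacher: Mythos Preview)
Your approach via Minkowski decomposition of the anticanonical polytope is genuinely different from the paper's, and the $\rho(X)=1$ endgame (writing $-K_X$ as a multiple of the ample generator of $\Pic(X)$ and using $\lcm(a_i)\ge\max a_i\ge\tfrac13\sum a_i$) is a clean alternative to the paper's enumeration. However, there are two real gaps.

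First, the reduction to $\rho(X)\le 2$ is not carried out. You explicitly say that ``this, together with the rank-$2$ enumeration below, is where essentially all of the effort goes'' --- but no argument is given. The twelve-point identity $\ell(P)+\ell(P^\circ)=12$ requires $P$ reflexive, i.e.\ $X$ Gorenstein; for merely klt $X$ the polytope $P_{-K_X}$ is only rational, the notion of lattice perimeter is ill-defined, and the additivity under real-weighted Minkowski sums $\sum m_jP_{M_j}$ (with $m_j\in\rr_{>0}$) does not make sense as stated. The promised ``Hodge-index estimates'' are not specified and it is unclear how they would yield $\rho\le 2$ uniformly.

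Second, the $\rho(X)=2$ case is not a finite check: there are infinitely many klt toric del Pezzo surfaces of Picard rank two (varying the singularities), so ``a finite check over the klt toric del Pezzo surfaces of Picard rank $2$'' is not available.

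By contrast, the paper bypasses both issues with a uniform birational trick. For $\rho(X)\ge 3$ a toric divisorial contraction $X\to Y$ immediately gives negative generalized complexity on $Y$. For $\rho(X)=2$ with $X\not\simeq\pp^1\times\pp^1$, Lemma~\ref{lem:toric-rho=2} produces a model $Y\to X$ extracting at most one divisor $E$ with $a_E(X)\in(0,1]$, together with a contraction $Y\to Z$ to a Picard-rank-one surface; pushing the log pull-back to $Z$ again yields negative complexity. For $\rho(X)=1$ non-canonical and $X\not\simeq F_n$, Lemma~\ref{lem:toric-rho=1-non-canonical} provides the analogous extraction-then-contraction. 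The case $X\simeq F_n$ is handled by an explicit linear-algebra computation on $\Sigma_n$ forcing the coefficient of the negative section to be $1$ (so $(X,\bM.)$ is not gklt and $\bM.$ cannot descend). Finally, for $\rho(X)=1$ canonical there are only five such toric surfaces, and each non-$\pp^2$, non-$F_2$ case is eliminated by a short explicit check. This strategy never needs a global bound like the twelve-point theorem and works uniformly over all klt (not just Gorenstein) toric surfaces.
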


Observe that Kobayashi-Ochiai Theorem can be regarded as the Picard rank one case of $(3)$ in the previous conjecture.
Our first theorem states that 
the previous conjecture holds true in dimension $2$. Indeed, we can prove a stronger result in this direction.

\begin{introthm}
\label{introthm:surface}
Let $(X,B,\bM.)$ be a generalized log Calabi--Yau pair of dimension $2$.
Then the following statements hold: 
\begin{enumerate}
    \item The inequality $\dim X + \rho(X)-|B|-|\bM.|\geq 0$ holds.
    \item If the equality holds in $(1)$,
    then $(X,\lfloor B\rfloor)$ is toric.
    \item If $B=0$ and the equality holds in $(1)$, then $X\simeq \pp^2$, $X\simeq \pp^1\times \pp^1$, or $X\simeq F_n$.\footnote{The variety $F_n$ denotes the contraction of the $(-n)$-curve in the Hirzebruch surface $\Sigma_n$.}
    \item If $B=0$, $(X,\bM.)$ is generalized klt, and equality holds in $(1)$, then 
    $X$ is a product of projective spaces.
\end{enumerate}
In particular, if $B=0$, $\bM.$ descends on $X$, and the equality holds in $(1)$, then
$X$ is a product of projective spaces.
\end{introthm}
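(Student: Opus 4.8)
The plan is to establish the four statements in order, reducing everything to the non-generalized case of Theorem~\ref{introthm:charct-toric} (for surfaces) and to a careful analysis of minimal models. For part $(1)$, I would pass to a log resolution $\pi\colon X'\to X$ on which $\bM.$ descends, writing $K_{X'}+B'+M'=\pi^*(K_X+B+\bM.X.)\equiv 0$ where $B'$ is the sum of the strict transform of $B$ and the reduced exceptional divisor with appropriate coefficients, and $M'$ is the nef trace of $\bM.$. Here one must be careful: discrepancies can be negative, so $B'$ need not be effective. The key observation is that running a $K_{X'}+B'$-MMP over $X$ (equivalently, over a point) does not decrease the quantity $|B|+|\bM.|$ relative to $\dim + \rho$, because each divisorial contraction drops $\rho$ by one but can only drop $|B'|$ by at most one (coefficients of extracted divisors are $\le 1$ on a generalized lc pair), and each flip is trivial for surfaces. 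So it suffices to prove $(1)$ on a minimal model, where $M'$ itself is nef, and then $|B'|+|M'| \le \dim + \rho$ follows by pushing $M'$ down to a mobile divisor and invoking the boundary-divisor inequality of Theorem~\ref{introthm:charct-toric} applied to the pair $(X^{\min}, B^{\min}+M^{\min})$, noting $M^{\min}$ decomposes into nef Cartier pieces each of which, being mobile, contributes a genuine prime component after a general choice. The subtle point is bookkeeping the coefficients through the MMP so that no strict inequality is lost.

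For part $(2)$, assume equality. By the analysis above, equality must propagate: the MMP from $X'$ to $X^{\min}$ must be an isomorphism in codimension one away from divisors that are eventually contracted, and on $X^{\min}$ we have equality in Theorem~\ref{introthm:charct-toric} for $(X^{\min}, B^{\min}+M^{\min})$. Hence $X^{\min}$ is toric and $B^{\min}+M^{\min}$ is, up to the torus-invariant boundary, a sum of torus-invariant divisors. The task is then to descend toricness along the birational morphism $X^{\min}\dashrightarrow X$: since every step is a contraction of a curve, and equality forces each contracted curve to be a torus orbit (otherwise $|B'|$ would strictly drop relative to $\rho$), each step is a toric morphism, so $X$ is toric; moreover $\lfloor B\rfloor$ is supported on the toric boundary, giving that $(X,\lfloor B\rfloor)$ is a toric pair. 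One should double-check the case where $\bM.$ contributes nontrivially, since the moduli part sits on $X^{\min}$ and must be verified torus-invariant there—this uses that a mobile nef Cartier divisor on a toric surface achieving equality in the complexity bound is linearly equivalent to a torus-invariant one.

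For part $(3)$, set $B=0$ and assume equality, so $\rho(X)+2 = |\bM.|$ and $\bM.$ is big and nef (being nontrivial with $-K_X\equiv \bM.X.$ effective and not zero, $X$ is rational). Running the MMP on $X$ (with no boundary, so a $-K_X$-positive MMP, i.e., a Mori fiber space output) and tracking $|\bM.|$ against $\rho$, equality forces $X$ itself to be a minimal rational surface: $\pp^2$, a Hirzebruch surface $\Sigma_n$, or a cone $F_n$. A direct computation on each—writing $\bM.$ as a positive combination of nef Cartier divisors and saturating the inequality $\rho+2 \ge |\bM.|$—rules out $\Sigma_n$ for $n\ge 1$ (the $(-n)$-curve obstructs nef Cartier classes from summing to $\rho+2=4$) but allows $\pp^1\times\pp^1$, $\pp^2$, and the singular cones $F_n$. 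This gives $(3)$. I expect the enumeration of which nef Cartier combinations on each minimal surface achieve equality to be the routine-but-delicate core here.

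For part $(4)$ and the final statement, add the hypothesis that $(X,\bM.)$ is generalized klt (in the final sentence, that $\bM.$ descends on $X$, which forces klt once $B=0$ and $\bM.X.\equiv -K_X$ with $X$ having at worst klt singularities from the MMP). The cone $F_n$ for $n\ge 2$ is excluded because its vertex is a klt-but-not-terminal singularity that is incompatible with $\bM.$ descending: if $\bM.$ descends on $F_n$ then $\bM.X.=-K_{F_n}$ is $\qq$-Cartier but the pair $(F_n,\bM.)$ would have a non-klt or boundary-forced center at the vertex once one accounts for the moduli part's contribution at the singular point—more precisely, pulling back to the resolution $\Sigma_n$ produces a discrepancy computation showing the generalized pair is not klt unless $n\le 1$. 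Hence only $\pp^2$ and $\pp^1\times\pp^1$ survive, which are exactly the products of projective spaces of dimension $2$. The main obstacle throughout is the coefficient bookkeeping in part $(1)$–$(2)$: ensuring that the generalized (moduli) part, which lives on a high model, is correctly accounted as ``boundary'' after descent without either over- or under-counting its contribution to $|\bM.|$, and that equality is genuinely preserved step-by-step in the MMP rather than merely the weaker inequality.
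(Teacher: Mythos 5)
Your proposal shares the paper's broad blueprint (reduce via MMP to a surface where the moduli part can be replaced by an honest boundary, then invoke the non-generalized complexity theorem of \cite{BMSZ18,MS21}; for $(3)$--$(4)$ analyse the possible toric outputs), but several steps would not survive as written.

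\textbf{The non-vanishing and lc-preservation steps are not free.} In part $(1)$ you want to push the nef trace $M'$ down to ``a mobile divisor'' and treat its general member as a boundary component so that Theorem~\ref{introthm:charct-toric} applies. On a Fano type surface this works because nef Cartier divisors are semiample (Mori dream space) and Kawamata--Fujino type non-vanishing and lifting statements produce sections whose general members avoid the generalized non-klt locus; this is exactly what the paper's Lemmas~\ref{lem:non-vanishing-surfaces}--\ref{lem:not-containing-non-klt} and Step~3 of Proposition~\ref{prop:theorem-FT-case} supply. Outside the Fano type case none of this is automatic: a nef Cartier divisor on a surface need not be semiample, and even when it is there is no reason a priori that adding its general member keeps the pair log canonical. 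The paper spends Propositions~\ref{prop:LCY-case-with-full-span} and~\ref{prop:part1-2} reducing the general glc case to the Fano type case precisely because these steps fail without it; your sketch skips this reduction entirely. Relatedly, if the moduli part is nontrivial the $(K+B)$-MMP cannot terminate on a minimal model (one would get $K+B\equiv 0$, contradicting $\bM.\not\equiv 0$), so ``it suffices to prove $(1)$ on a minimal model'' is not the right framing; you must contend with Mori fiber space outputs, and the bookkeeping through those is where most of the work lives.

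\textbf{The case analysis in $(3)$--$(4)$ is incomplete and one key assertion is false.} You claim that equality with $B=0$ forces $\bM X.$ to be ``big and nef,'' but this fails: in Example~\ref{ex:Fn} the components of $\bM.$ include nef-but-not-big fiber classes, yet the generalized complexity is $0$. Dropping bigness changes the enumeration. Moreover, after $(2)$ one knows $X$ is a toric surface, and for $\rho(X)=1$ there are canonical toric Fano surfaces beyond $\pp^2$ and $F_n$ (the three extra cases of \cite[Prop.~4.1]{Nil05}, with fans such as $\{(-2,1),(1,1),(1,-2)\}$). Your list ``$\pp^2$, $\Sigma_n$, or $F_n$'' omits these, and ruling them out is not routine: the paper needs the lattice computations in Cases~4.1--4.3. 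Finally, the explanation for excluding $F_n$ in $(4)$ via a discrepancy at the vertex is not what actually happens; the correct obstruction (Case~3.2 in the paper) is a coefficient computation at the $(-n)$-curve $C_0$ on $\Sigma_n$ showing $\alpha=1$, so the pair is glc but never gklt, regardless of behaviour at the vertex.
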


As an application of this theorem, 
we show that $3$-fold singularities 
with generalized complexity zero are formally toric.

\begin{introthm}
\label{introthm:3-fold-sing}
Let $(X,B,\bM.;x)$ be a generalized 
log canonical singularity of dimension $3$.
Assume that $(X;x)$ is klt.
Then the following statements hold:
\begin{enumerate}
    \item The inequality $\dim X +\rho(X_x)-|B|-|\bM.|\geq 0$ holds.
    \item If the equality holds in $(1)$, then $(X,\lfloor B\rfloor)$ is formally toric at $x$. 
\end{enumerate}
\end{introthm}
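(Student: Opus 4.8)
The plan is to reduce the local statement to the global surface statement of Theorem~\ref{introthm:surface} by passing to a suitable model and a Calabi--Yau section, in the spirit of the passage from the global characterization of toric varieties to its local analog in~\cite{MS21}. First I would take a $\qq$-factorial dlt modification $(Y,B_Y,\bM.)\to (X,B,\bM.)$ of the germ, so that we may work with a generalized dlt pair, and then run a suitable minimal model program over $X$ on $-K_Y - \lfloor B_Y \rfloor$ (equivalently, extract the lc places sitting over $x$). Since $(X;x)$ is klt, the exceptional locus over $x$ is connected and the fiber over $x$ behaves like a generalized log Calabi--Yau variety of dimension $2$; the key point is that the relative Picard rank $\rho(X_x)$ of the germ, together with the coefficients appearing on the fiber, sees exactly the quantity $\dim X + \rho(X_x) - |B| - |\bM.|$. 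For part~(1) this gives the inequality: after the dlt modification, the dual complex of $\lfloor B_Y\rfloor + \mathrm{Exc}$ over $x$ together with the moduli part contributes at most $\dim X + \rho(X_x)$ to the coefficient count, which is the local avatar of the bound in Theorem~\ref{introthm:surface}(1); one can also obtain it directly from the global inequality applied on a general hyperplane section or on a component of a degeneration.

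For part~(2), assume the local equality $\dim X + \rho(X_x) = |B| + |\bM.|$ holds. The strategy is to produce, on an exceptional divisor $E$ over $x$ (a generalized lc place), a $2$-dimensional generalized log Calabi--Yau pair $(E, B_E, \bM.|_E)$ by generalized adjunction, arranged so that its generalized complexity is again $0$; the equality case forces no ``loss'' in adjunction, so the coefficients transported to $E$ saturate the surface bound. Then Theorem~\ref{introthm:surface}(2) applies and $(E,\lfloor B_E\rfloor)$ is toric. One then propagates toricity from the central fiber/exceptional divisor to a formal neighborhood of $x$: the toric structure on the dual complex of the dlt modification over $x$, combined with the fact that all the extracted divisors are toric and glue along toric strata, yields a toric resolution of the germ, hence $(X,\lfloor B\rfloor)$ is formally toric at $x$. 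This last gluing/formalization step is where I expect to invoke the cone-structure results on generalized log Calabi--Yau pairs of complexity $0$ and the formal toric criterion used in~\cite{MS21}.

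The main obstacle will be the bookkeeping of coefficients under generalized adjunction in the equality case: one must show that passing from the $3$-fold germ to a surface lc place is \emph{coefficient-exact}, i.e.\ that $|B_E| + |\bM.|_E| = \dim E + \rho(E) $ with no defect, so that the hypothesis of Theorem~\ref{introthm:surface}(2) is met. This requires controlling the different and the moduli part of the generalized canonical bundle formula simultaneously, and ruling out the a priori possibility that some of the complexity ``escapes'' into a part of the exceptional locus not visible on a single $E$. A secondary difficulty is ensuring the klt hypothesis on $(X;x)$ (as opposed to merely lc) is genuinely used to guarantee that the relevant dual complex is a point or an interval/cone of the expected dimension, so that the reduction lands on an honest surface rather than a lower-dimensional or disconnected object; I would handle this by the connectedness principle for generalized lc centers together with a careful analysis of the two extreme cases $\rho(X_x)$ small.
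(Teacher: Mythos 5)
Your high-level skeleton for part (1) matches the paper: take a birational modification extracting an lc place $E$ over $x$, perform generalized adjunction to $E$, and apply the surface inequality (Theorem~\ref{introthm:surface}(1)) via the adjunction comparison (Proposition~\ref{prop:gen-comp-under-adj}). The paper uses a plt blow-up of $(X;x)$ rather than a dlt modification followed by an MMP, which is slightly cleaner since one extracts exactly one divisor, but this is an inessential difference.

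For part (2), however, there is a genuine gap in the way you propose to pass from toricity of $E$ back to formal toricity of the germ. You suggest ``gluing the toric structures on the exceptional strata into a toric resolution,'' but the paper does not glue anything, and it is not clear how to make such a gluing argument work. The actual bridge is entirely different and is the heart of the proof: once $E$ is known to be toric, the proof uses Step~3 of Proposition~\ref{prop:theorem-FT-case} to replace each nef component $M_{E,i}$ of the moduli part on $E$ by a general \emph{effective} divisor $\Gamma_i\sim M_{E,i}$ so that $(E,B_E+\sum \lambda_i\Gamma_i)$ becomes an honest (non-generalized) log Calabi--Yau pair of complexity $0$; then \cite[Theorem~4.5]{MS21} identifies the components with the torus-invariant divisors, and Lemma~\ref{lem:lifting-div} together with Kawamata--Viehweg vanishing on $Y$ lifts each such section to a divisor $G_i\sim \bM Y,i.$ on the $3$-fold $Y$ restricting to the right torus-invariant divisor of $E$. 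Pushing forward to $X$ yields a classical log Calabi--Yau pair of complexity $0$ containing $\lfloor B\rfloor$ in its support, to which the local characterization of toric singularities for ordinary pairs \cite[Theorem~1]{MS21} applies directly. Without this ``replace-the-moduli-part-by-effective-divisors'' step, one cannot invoke the classical MS21 criterion (which is for pairs, not generalized pairs), and your coefficient bookkeeping concern, while real, is not the hard point; the hard point is converting $\bM.$ into a boundary. You should also note that the reduction uses the coefficient-$1$ hypothesis on $E$ and the irreducibility of restrictions (Remark~\ref{rem:irreducible-comp} and Lemma~\ref{lem:lifting-div}), which is where the equality case is exploited to rule out ``loss'' under adjunction.
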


In the context of Conjecture~\ref{conj:gen-complexity}, if we ask 
$\bM.$ to be a combination of ample Cartier divisors on a model where it descends, 
then we have better control on $|\bM.|$ as shown in the next theorem. 
The following theorem can be regarded as a version
of Kobayashi-Ochiai theorem
for generalized pairs. 

\begin{introthm}\label{introthm:KO-global}
Let $(X,B,\bM.)$ be a generalized log Calabi--Yau pair of dimension $n$. 
Assume the following conditions hold:
\begin{itemize}
\item the b-nef divisor $\bM.$ descends on a smooth variety $X'$, and 
\item we can write $\bM X'.=\sum_{i=1}^k \lambda_i M_i$ where each $M_i$ is an ample Cartier divisor
and each $\lambda_i$ is non-negative.
\end{itemize}
Then, we have that:
\begin{enumerate}
\item The inequality 
$\sum_{i=1}^k \lambda_i \leq n+1$ holds. 
\item If $\sum_{i=1}^k \lambda_i=n+1$, then $B=0$, 
$X'\simeq \pp^n$,
and $X'\rightarrow X$ is the identity.
\end{enumerate} 
\end{introthm}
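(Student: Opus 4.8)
I would deduce this from Theorem~\ref{introthm:surface}... wait, no — this is the global Kobayashi--Ochiai statement in arbitrary dimension $n$, so I cannot reduce to surfaces. Let me think about what tools are genuinely available.

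\emph{The plan} is to reduce the inequality to the behaviour of a minimal rational curve on a crepant model with effective boundary, and then to use the rigidity forced by the hypotheses ``each $M_i$ ample Cartier'' and ``$\sum_i\lambda_i=n+1$'' to pin down the model. First, if $\sum_i\lambda_i=0$ there is nothing to prove, so assume $\bM X'.=\sum_i\lambda_iM_i$ is big and nef on $X'$; pushing forward, $-K_X=B+\bM X.$ is big, so $X$ and every birational model of it are uniruled. Fix a $\qq$-factorial generalized dlt modification $g\colon(Y,B_Y,\bM.)\to(X,B,\bM.)$ (so $K_Y+B_Y+\bM Y.\equiv0$, $B_Y\ge0$, $(Y,B_Y,\bM.)$ is generalized dlt, and in particular $Y$ is klt), and a common resolution $W\to X'$, $W\to Y$, so that $\bM W.$ is the pullback of $\bM X'.$ and $q^{*}\bM Y.\ge\bM W.$ for $q\colon W\to Y$. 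Since $Y$ is klt and uniruled there is a rational curve $\ell$ through a general point with $-K_Y\cdot\ell\le\dim Y+1=n+1$; taking $\ell$ general, its strict transform in $W$ maps to a curve in $X'$, and then
\[
\textstyle\sum_i\lambda_i\ \le\ \bM Y.\cdot\ell\ \le\ -K_Y\cdot\ell\ \le\ n+1 ,
\]
where the first inequality uses $M_i\cdot(\mathrm{curve})\ge1$ and the projection formula on $W$, and the second uses $-K_Y=B_Y+\bM Y.$ with $B_Y\ge0$. This proves $(1)$.

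Now suppose $\sum_i\lambda_i=n+1$. Then all inequalities above are equalities for every general $\ell$; in particular $B_Y\cdot\ell=0$, and since such curves cover $Y$ this forces $B_Y=0$, hence $B=0$. Consequently $-K_Y=\bM Y.$, so the previous computation gives $-K_Y\cdot\ell=\bM Y.\cdot\ell\ge n+1$ for the general member of \emph{any} dominating family of rational curves on $Y$; as the minimal dominating family always has anticanonical degree $\le\dim Y+1$, it has degree exactly $n+1$, and the Cho--Miyaoka--Shepherd-Barron--Kebekus characterization of projective space yields $Y\simeq\pp^n$. Finally $g\colon Y\to X$ is a dlt modification with $B_Y=0$, hence extracts no divisor, so it is a small birational morphism $\pp^n\simeq Y\to X$; but $\pp^n$ admits no nontrivial small birational contraction (a contracted curve $C$ would satisfy $g^{*}A\cdot C=A\cdot g_{*}C=0$ for an ample $A$ on $X$, while $g^{*}A$ is a nonzero nef class on $\pp^n$, so $g^{*}A\cdot C>0$). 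Therefore $g$ is an isomorphism, $X\simeq\pp^n$, and $K_X+\bM X.\equiv0$ gives $\bM X.\equiv(n+1)H$.

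It remains to show $\pi\colon X'\to X\simeq\pp^n$ is an isomorphism. Suppose not; since $\pp^n$ is smooth, $\pi$ factors through the blow-up of a smooth centre $Z_0$ of codimension $c_0\ge2$, whose divisorial valuation $E_0$ has discrepancy $a(E_0;\pp^n)=c_0-1\le n-1$ and is realized by a $\pi$-exceptional prime divisor on $X'$. For each $i$ with $\lambda_i>0$ put $H_i:=\pi_{*}M_i$; since $\deg H_i=M_i\cdot(\text{strict transform of a general line})\ge1$ and $\sum_i\lambda_i\deg H_i=\deg\bM X.=n+1=\sum_i\lambda_i$, each $H_i$ is a hyperplane. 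Then $F_i:=\pi^{*}H_i-M_i$ is effective (negativity lemma, $M_i$ nef), Cartier and $\pi$-exceptional, with $-F_i\equiv_{X}M_i$ being $\pi$-ample; an effective $\pi$-exceptional divisor whose negative is $\pi$-ample must be supported on the whole exceptional locus, so $\mult_{E_0}F_i\ge1$, and hence $\mult_{E_0}F\ge\sum_i\lambda_i=n+1$ for $F:=\pi^{*}\bM X.-\bM X'.=\sum_i\lambda_iF_i$. On the other hand, comparing the crepant identity $K_{X'}+B''+\bM X'.\equiv0$ for the pullback of $(\pp^n,0,\bM.)$ with $K_{X'}=\pi^{*}K_{\pp^n}+\sum_ja(E_j;\pp^n)E_j$ and $\bM X'.=\pi^{*}\bM X.-F$ gives $B''=F-\sum_ja(E_j;\pp^n)E_j$, so the generalized log discrepancy of $E_0$ with respect to $(\pp^n,0,\bM.)$ is
\[
1-\mult_{E_0}B''\ =\ 1-\mult_{E_0}F+a(E_0;\pp^n)\ \le\ 1-(n+1)+(n-1)\ =\ -1\ <\ 0 ,
\]
contradicting generalized log canonicity. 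Hence $\pi$ is an isomorphism, which finishes $(2)$.

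The main obstacles I anticipate are: verifying cleanly that the general minimal rational curve $\ell$ on $Y$ satisfies $\bM Y.\cdot\ell\ge\sum_i\lambda_i$ even though $\bM.$ only descends on $X'$ (this is exactly where the common resolution $W$ and the negativity lemma $q^{*}\bM Y.\ge\bM W.$ enter), and the last step, where one must keep careful track of the multiplicities of $F$, the ordinary discrepancies $a(E_j;\pp^n)$, and the generalized discrepancies of $(\pp^n,0,\bM.)$; besides these the argument relies on two external inputs, the existence of $\qq$-factorial generalized dlt modifications and the Cho--Miyaoka--Shepherd-Barron--Kebekus characterization of $\pp^n$ in the klt category.
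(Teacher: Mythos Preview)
Your approach via minimal rational curves is genuinely different from the paper's, which is much shorter: the paper works directly on the smooth model $X'$, observes that if $\sum\lambda_i\ge n+1$ then $K_{X'}+\bM X'.$ is nef (any $(K_{X'}+\bM X'.)$-negative curve is $K_{X'}$-negative, and the cone theorem bounds extremal rays on a smooth variety by $n+1$), applies the negativity lemma to conclude that the crepant boundary $B'$ on $X'$ is effective, and then cites \cite[Corollary 1.3]{FM21} for both the inequality and the equality characterisation. Your route through a dlt modification and Cho--Miyaoka--Shepherd-Barron is more self-contained but pays for it in technical overhead.

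There is, however, a real gap. You pass to a $\qq$-factorial gdlt modification $Y$, which is klt but typically \emph{singular}, and then invoke ``since $Y$ is klt and uniruled there is a rational curve $\ell$ through a general point with $-K_Y\cdot\ell\le n+1$''. Mori's bound $n+1$ is a consequence of bend-and-break on a \emph{smooth} variety; on klt varieties the standard bound for lengths of extremal rays is Kawamata's $2\dim Y$, and pushing a minimal covering family down from a resolution $\tilde Y\to Y$ picks up discrepancy terms $\sum a_i(E_i\cdot\tilde\ell)$ which can be positive. I do not know a reference giving the $n+1$ bound for covering families on klt varieties, and without it your chain $\sum\lambda_i\le\bM Y.\cdot\ell\le -K_Y\cdot\ell\le n+1$ breaks at the last step. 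The same issue recurs in the equality case when you appeal to the CMSB--Kebekus characterisation: the statement you want (minimal covering degree exactly $n+1$ forces $\pp^n$) is standard for smooth Fano varieties, but needs justification in the klt setting. The natural repair is to carry out the whole argument on the smooth $X'$ rather than on $Y$; but then you no longer know $B'\ge 0$ a priori, and establishing this is exactly the cone-theorem-plus-negativity-lemma step the paper uses.

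A smaller point: in the final paragraph you assert that a non-isomorphic birational morphism $\pi\colon X'\to\pp^n$ between smooth varieties ``factors through the blow-up of a smooth centre $Z_0$'' whose exceptional divisor is realised on $X'$. Projective birational morphisms between smooth varieties need not factor through any smooth blow-up, so this step also needs care; what you actually need is a $\pi$-exceptional prime divisor $E_0$ on $X'$ with ordinary discrepancy $a(E_0;\pp^n)\le n-1$, and that requires a short argument rather than the factorisation claim.
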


The previous theorem admits a local version that characterizes smooth points.
The following theorem can be regarded as a local version of Kobayashi-Ochiai Theorem.

\begin{introthm}\label{introthm:KO-local}
Let $(X,B,\bM.;x)$ be a 
$\qq$-factorial generalized log canonical singularity of dimension $3$.
Assume the following conditions hold:
\begin{itemize}
\item the b-nef divisor $\bM.$ descends on a smooth variety $X'$, and 
\item we can write
$\bM X'.=\sum_{i=1}^k \lambda_i M_i$ where each $M_i$ is an ample Cartier divisor and each $\lambda_i$ is non-negative.
\end{itemize} 
Then, we have that:
\begin{enumerate}
\item The inequality 
$\sum_{i=1}^k \lambda_i \leq 3$ holds. 
\item If $\sum_{i=1}^k \lambda_i=3$, then $B=0$, $(X;x)$ is isomorphic to a cone over $\pp^2$, and $X'\rightarrow X$ is the blow-up of the maximal ideal.
\end{enumerate} 
Furthermore, if $X$ is factorial at $x$, then the germ $(X;x)$ is smooth.
\end{introthm}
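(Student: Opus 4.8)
The plan is to reduce this three-dimensional local statement to the two-dimensional \emph{global} Kobayashi--Ochiai statement, that is, to Theorem~\ref{introthm:KO-global} with $n=2$, by cutting the generalized log Calabi--Yau structure down to a divisor over $x$ via generalized adjunction and then transporting the resulting structure back to the germ $(X;x)$. If $(X,B,\bM.)$ admits no generalized log canonical place over $x$, then $(X,B,\bM.)$ is generalized klt at $x$, hence $(X,0)$ is klt, so Theorem~\ref{introthm:3-fold-sing} applies and gives $\sum_{i=1}^{k}\lambda_{i}=|\bM.|\le\dim X+\rho(X_{x})-|B|$; since $X$ is $\qq$-factorial the local Picard rank $\rho(X_{x})$ vanishes, so $\sum_{i=1}^{k}\lambda_{i}\le 3$, and one checks that equality cannot occur when $(X,B,\bM.)$ is generalized klt, so for $(2)$ I may assume a generalized log canonical place over $x$ exists.

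Granting that, I would first arrange that $X'\to X$ is a plt blow-up (Koll\'ar component) of the germ: a projective birational morphism $g\colon X'\to X$ whose exceptional locus is a single prime divisor $E$ over $x$ with $-E$ ample over $X$ (so $E=g^{-1}(x)$), with $(X',E+B_{X'},\bM.)$ generalized plt along $E$ where $B_{X'}=g_{*}^{-1}B$, and with $E$ a generalized log canonical place, so that
\[
K_{X'}+E+B_{X'}+\bM X'.=g^{*}(K_{X}+B+\bM X.)\sim_{\qq}0
\]
in a neighbourhood of $E$. The delicate point is to do this while keeping the ample-Cartier decomposition of $\bM.$: one must realize such an $E$ on a smooth model on which $\bM.$ descends with $\bM X'.=\sum_{i}\lambda_{i}M_{i}$, each $M_{i}$ ample over $X$. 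Here the ampleness of $\bM X'.$ over $X$ is what rigidifies the extraction, since then $-(K_{X'}+E+B_{X'})\equiv\bM X'.$ is ample over $X$ and $g$ is a $(K_{X'}+E+B_{X'})$-negative birational contraction; running a relative minimal model program over $X$ and using that $X$ is $\qq$-factorial, one reduces to the case of irreducible exceptional divisor, and I expect precisely this compatibility --- producing a single model that is simultaneously a plt blow-up and carries the ample-Cartier moduli decomposition --- to be the main obstacle.

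Once this is arranged, generalized divisorial adjunction along $E$ gives
\[
\bigl(K_{X'}+E+B_{X'}+\bM X'.\bigr)|_{E}=K_{E}+B_{E}+\bM E.\sim_{\qq}0,
\]
so $(E,B_{E},\bM E.)$ is a two-dimensional generalized log Calabi--Yau pair, in fact generalized klt by the normality of plt centres together with plt adjunction. Its moduli part descends on $E$ itself, with $\bM E.=\sum_{i=1}^{k}\lambda_{i}\,(M_{i}|_{E})$: each $M_{i}|_{E}$ is ample Cartier on $E$ because $M_{i}$ is ample over $X$ and $E$ maps to the point $x$, and the coefficients $\lambda_{i}$ are unchanged under restriction. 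Applying Theorem~\ref{introthm:KO-global} with $n=2$ to $(E,B_{E},\bM E.)$ then yields $\sum_{i=1}^{k}\lambda_{i}\le 3$, which proves $(1)$; and if $\sum_{i=1}^{k}\lambda_{i}=3$, the same theorem forces $B_{E}=0$, $E\simeq\pp^{2}$, and that $\bM E.$ already descends on $E$.

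It remains to descend these conclusions to $(X;x)$, assuming $\sum_{i=1}^{k}\lambda_{i}=3$. From $B_{E}=0$ and $B_{E}=\operatorname{Diff}_{E}(B_{X'})\ge B_{X'}|_{E}\ge 0$ we get $B_{X'}|_{E}=0$ and $\operatorname{Diff}_{E}(0)=0$; since every nonzero Weil divisor on the germ meets $x$ while $g$ contracts precisely $E$ to $x$, the first vanishing forces $B=0$, hence $B_{X'}=0$. The vanishing of the different shows that $X'$ is smooth along $E$, so a neighbourhood of $E\simeq\pp^{2}$ in $X'$ is the total space of the normal bundle $\oo_{\pp^{2}}(-a)$, where $a=-\deg(E|_{E})$ is a positive integer because $-E$ is $g$-ample. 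Contracting $E$ then identifies $(X;x)$ with the affine cone over $(\pp^{2},\oo(a))$ --- a cone over $\pp^{2}$ --- and identifies $g\colon X'\to X$ with the blow-up of the vertex, which is exactly the blow-up of the maximal ideal $\mm$. Finally, the local class group of the cone over $(\pp^{2},\oo(a))$ is $\zz/a\zz$, so if $X$ is factorial at $x$ then $a=1$ and the germ $(X;x)$ is smooth.
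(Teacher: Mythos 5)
There is a genuine gap, and you have already put your finger on it: you need a single model that is simultaneously a plt blow-up of $(X;x)$ and a smooth model on which $\bM.$ descends with an ample-Cartier decomposition. Your suggested remedy --- run a relative $(K_{X'}+E+B_{X'})$-MMP over $X$ to reduce to an irreducible exceptional divisor --- does not obviously go through: in dimension $3$ the MMP can flip, and the pushforward of an ample Cartier $M_i$ along a flip is not nef in general, so the moduli decomposition breaks; and even if the MMP were purely divisorial, the output need not be smooth, so the restrictions $M_i|_E$ need not be Cartier and Theorem~\ref{introthm:KO-global} would not apply to $E$ as written. Your side remark that ``equality cannot occur when $(X,B,\bM.)$ is generalized klt'' is also not justified; it is true, but it follows from the argument below rather than from an independent check.

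The paper sidesteps all of this by never attempting to replace the given $X'$. Since $X$ is $\qq$-factorial, $\operatorname{Ex}(X'/X)$ is purely divisorial. For any $K_{X'}$-negative extremal curve $C$ over $X$, the induced contraction $X'\to Y$ is a smooth-blow-down with smooth exceptional surface $E$ by \cite[Theorem 1.32]{KM98}, and adjunction on $E$ combined with $\bM X'.\cdot C\ge 3$ (each $M_i$ ample Cartier) and $E\cdot C<0$ gives $(K_{X'}+\bM X'.)\cdot C>0$; for $K_{X'}$-nonnegative extremal rays the positivity is immediate. Hence $K_{X'}+\bM X'.$ is $\pi$-ample. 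The negativity lemma then makes $B':=\pi^*(K_X+B+\bM X.)-(K_{X'}+\bM X'.)$ effective and supported on the full exceptional locus. Intersecting the identity $K_{X'}+E+\bM X'.\sim_\qq(1-\alpha)E-(B'-\alpha E)$ with $C$ (where $\alpha=\operatorname{coeff}_E(B')$) and using $E\cdot C<0$, $(B'-\alpha E)\cdot C\ge 0$ forces $\alpha=1$. Only now does adjunction produce a bona fide generalized log Calabi--Yau $(E,B_E,\bM E.)$ to which Theorem~\ref{introthm:KO-global} applies, giving $E\simeq\pp^2$ and $B_E=0$; the latter, together with connectedness of $\operatorname{Ex}(X'/X)$, shows $E$ is the entire exceptional locus, so $X'\to X$ is the plt-type extraction you wanted --- as a conclusion, not a hypothesis. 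Your remaining steps (identifying $(X;x)$ with the cone over $(\pp^2,\oo(a))$, the blow-up of the maximal ideal, and $a=1$ in the factorial case from $\operatorname{Cl}(X;x)\cong\zz/a\zz$) are then fine and agree with what the paper's proof leaves implicit.
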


The statement of Theorem~\ref{introthm:KO-local}
does not hold if we replace 
the ampleness condition
on $M_i$ with
big and nef.
We show this in Example~\ref{ex:big-nef}.
On the other hand, 
the following corollary follows from
Theorem~\ref{introthm:surface}.

\begin{introcor}\label{introcor:descend-p2}
Let $(X,B,\bM.)$ be a generalized log Calabi--Yau pair of dimension $2$.
Let $X'$ be a model where $\bM.$ descends.
Assume that $\bM X'.=\sum_{i=1}^k \lambda_i M_i$ where each $M_i$ is a big and nef Cartier divisor
and each $\lambda_i$ is nonnegative. 
Then, we have that $\sum_{i=1}^k \lambda_i \leq 3$.
Furthermore, if $\sum_{i=1}^k \lambda_i=3$, then 
$X\simeq \pp^2$
and $\bM.$ descends on $X$.
\end{introcor}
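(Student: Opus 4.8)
The plan is to reduce the statement to part (4) (and the final "in particular" clause) of Theorem~\ref{introthm:surface}. The key observation is that $|\bM.|$, as defined via a decomposition into nef Cartier divisors on a model where $\bM.$ descends, satisfies $|\bM.| \geq \sum_{i=1}^k \lambda_i$ whenever we are given a decomposition $\bM X'. = \sum_{i=1}^k \lambda_i M_i$ with the $M_i$ nef Cartier (this is essentially the definition, since big-and-nef implies nef). Thus from Theorem~\ref{introthm:surface}(1) applied to the generalized log Calabi--Yau surface $(X,B,\bM.)$ we get
\[
0 \;\leq\; \dim X + \rho(X) - |B| - |\bM.| \;\leq\; 2 + \rho(X) - |B| - \sum_{i=1}^k \lambda_i.
\]
Since $\rho(X)\geq 1$, this already gives $\sum_{i=1}^k \lambda_i \leq \rho(X) + 2 - |B|$; to reach the bound $\sum_{i=1}^k\lambda_i \le 3$ I would need to rule out $\rho(X)\geq 2$ contributing extra room, so the first real step is to show that the bigness of the $M_i$ forces $\rho(X)=1$ unless $X$ is a product where the classes spread out in a controlled way. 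Concretely, if $\bM.$ descends on $X'$ with $\mu\colon X'\to X$, then $\mu_*(\lambda_i M_i)$ are big (being push-forwards of big divisors under a birational morphism) and their sum is a big divisor whose class is $\bM.$ at the level of $X$; one then bounds $|\bM.|$ on $X$ from below by $\sum \lambda_i$ and uses that a big class on a surface with $\rho(X)=2$ still only needs two summands.

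The cleaner route, which I expect to be the one taken, is: run Theorem~\ref{introthm:surface} to conclude that equality $\dim X + \rho(X) = |B| + |\bM.|$ must hold in the extremal case $\sum\lambda_i = 3$, and that this forces $|B|=0$ (so $B=0$) together with $\rho(X) + 2 = |\bM.| = \sum_{i}\lambda_i = 3$, hence $\rho(X)=1$. A surface of Picard rank one that carries a generalized klt log Calabi--Yau structure with $B=0$ is, by Theorem~\ref{introthm:surface}(4), a product of projective spaces of Picard rank one, i.e.\ $X\simeq \pp^2$. (Here I should first verify $(X,\bM.)$ is generalized klt, or rather reduce to that case: since $B=0$ and $K_X+\bM. \equiv 0$ with $X$ having Picard rank one and $\bM.$ big, $X$ has at worst klt — in fact canonical, being $\mathbb Q$-Fano of Picard rank one in dimension $2$ with the Calabi--Yau condition — singularities, so Theorem~\ref{introthm:surface}(4) applies.) Finally, that $\bM.$ descends on $X=\pp^2$ follows because on $\pp^2$ the class $\bM.=\sum\lambda_i M_i$ with $\sum\lambda_i=3$ must equal $-K_{\pp^2}=\oo_{\pp^2}(3)$ numerically, and a nef b-divisor numerically equivalent to a semiample divisor on $X$ whose total transform agrees on $X'$ descends; more elementarily, the Kobayashi--Ochiai part (Theorem~\ref{introthm:KO-global}(2)) gives $X'\simeq\pp^2$ and $X'\to X$ the identity in the ample-Cartier case, and one passes from big-and-nef to ample after contracting the curves on which the $M_i$ are not ample, which must be $\mu$-exceptional.

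The main obstacle is the passage from \emph{big and nef} to \emph{ample}: a priori the $M_i$ may be nef but not ample on $X'$, contracting some curves, and one must check that the model $X'$ can be replaced (or that $\mu\colon X'\to X$ itself contracts exactly those curves) so that descending on $X$ makes sense and the classes become ample there. On a surface this is manageable — the locus where a big and nef divisor fails to be ample is a finite union of $(-n)$-curves, and the Calabi--Yau numerical condition $K_{X'}+\bM X'.\equiv \mu^*(K_X)$ pins down which curves are $\mu$-exceptional — but it is the step requiring genuine geometry of surfaces rather than formal manipulation of the complexity inequality. Everything else is bookkeeping with $|\bM.|$ and a direct appeal to Theorem~\ref{introthm:surface}.
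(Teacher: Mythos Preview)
Your ``cleaner route'' contains a genuine gap. From $\sum_i\lambda_i=3$ and the complexity inequality you only get
\[
0 \;\le\; 2+\rho(X)-|B|-\sum_i\lambda_i \;=\; \rho(X)-|B|-1,
\]
which is a \emph{lower} bound $\rho(X)\ge 1+|B|$, not an upper bound. There is no reason why $\sum_i\lambda_i=3$ should force equality in Theorem~\ref{introthm:surface}(1), nor why $|\bM.|$ should equal $3$ rather than exceed it; so the deductions $\rho(X)=1$ and $B=0$ are unjustified. Everything downstream of this step (the appeal to part~(4), the verification of gklt) rests on this missing implication. You essentially identify the obstacle yourself in the first paragraph and then assume it away in the second.

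The paper's argument is structurally different. It does not try to pin down $\rho(X)$ directly. Instead, after asserting $\sum_i\lambda_i\le 3$ and $B=0$ via Theorem~\ref{thm:gen-log-CY-comp}(1), it applies part~(3) of that theorem---the glc classification, not the gklt case~(4)---to obtain $X\in\{\pp^2,\ \pp^1\times\pp^1,\ F_n\}$. The elimination of $\pp^1\times\pp^1$ and $F_n$ then comes from inspecting the \emph{proof} of Theorem~\ref{thm:gen-log-CY-comp} (the explicit case analysis in Section~\ref{subsec:toric case}): in each of those cases one shows that some component of the moduli decomposition is forced to be numerically a fibre of a $\pp^1$-fibration, hence not big, contradicting the hypothesis on the $M_i$. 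This is exactly where the bigness assumption does its work, and it is the step your proposal is missing. For the descent of $\bM.$ on $\pp^2$, the paper does not pass from big--nef to ample and invoke Theorem~\ref{introthm:KO-global}; it quotes \cite[Theorem~1.8]{Fil18b}, using that each component of $\bM \pp^2.$ is a line and hence has self-intersection~$1$.
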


Item $(3)$ in 
Conjecture~\ref{conj:gen-complexity} suggests that the previous corollary generalizes to every dimension.

\section{Preliminaries}
\label{sec:prel}

In this section, 
we collect some preliminary results
about generalized pairs,
generalized singularities,
the generalized complexity, 
and its behaviour under adjunction.

\subsection{Generalized pair}

In this subsection, we recall some definitions and propositions
about generalized pairs.

\begin{definition}
{\em 
Let $X$ be a normal projective variety.
A {\em b-divisor} on $X$ consists of the data of a divisor $\bM Y.$ 
on each variety $Y$ which is birational to $X$, subject to the following condition.
For every projective birational morphism
$\pi\colon Y\rightarrow Z$, we have that
$\pi_*\bM Y.=\bM Z.$
We will write $\bM.$ for the b-divisor.
The divisor $\bM X.$ is called the {\em trace} of $\bM.$ in $X$.
We may say that $\bM.$ is a b-divisor on $X$ or in the birational class of $X$.
The trivial b-divisor is the b-divisor for which is trace is the trivial Weil divisor on each model.

We say that a b-divisor $\bM.$ is {\em b-Cartier} if there exists a birational model $Y$ of $X$ 
such that $\bM Z.=\pi^*\bM Y.$ for every projective birational morphism 
$\pi\colon Z\rightarrow Y$.
In this case, we say that $\bM.$ descends on $Y$.
Note that the model where $\bM.$ descends is not unique.
Indeed, if $\bM.$ descends on $Y$, then 
it also descends on any model which admits a projective birational morphism to $Y$.
If $\bM.$ is {\em nef} on a model where it descends, then we say that $\bM.$ is a {\em b-nef divisor}.
For instance, the trivial b-divisor is a b-nef divisor.

Let $\bM.$ be a b-nef divisor on $X$ and $Y$ be a model where $\bM.$ descends.
Let $\pi\colon Y\rightarrow X$ be the associated projective birational morphism.
Let $U\subset X$ be an open set
and $U_Y$ its pre-image on $Y$.
We say that $\bM.$ descends on $U$ if 
${\pi|_U}^*(\bM X.|_U)=\bM Y.|_{U_Y}$ holds.
}
\end{definition} 

\begin{definition}
\label{def:gen-pair}
{\em 
A {\em generalized pair}
consists of a triple
$(X,B,\bM.)$ where:
\begin{itemize}
\item $X$ is a normal projective variety,
\item $B$ is an effective divisor on $X$, 
\item $\bM.$ is a b-nef divisor on $X$, and 
\item the divisor $K_X+B+\bM X.$ is $\rr$-Cartier. 
\end{itemize}
If $\bM.$ is the trivial b-nef divisor, then we just write $(X,B)$ and call this a {\em log pair} or simply a {\em pair}.
We call $B$ the {\em boundary divisor}
and $\bM.$ the {\em moduli divisor}.
}
\end{definition}

\begin{definition}
\label{def:gen-lc}
{\em 
Let $(X,B,\bM.)$ be a generalized pair.
Let $Y\rightarrow X$ be a projective birational morphism.
We can write
\[
K_Y+B_Y+\bM Y.=\pi^*(K_X+B+\bM X.),
\]
for some divisor $B_Y$ on $Y$.
The {\em generalized log discrepancy}
of $(X,B,\bM.)$ at a prime divisor $E\subset Y$, denoted by $a_E(X,B,\bM.)$
is defined to be $1-{\rm coeff}_E(B_Y)$.

A generalized pair $(X,B,\bM.)$ is said to be {\em generalized log canonical} (or glc for short) if all its generalized log discrepancies are non-negative.
A generalized pair $(X,B,\bM.)$ is said to be {\em generalized Kawamata log terminal} (or gklt for short) if all its generalized log discrepancies are positive.
A generalized pair $(X,B,\bM.)$ is said to be {\em generalized terminal} if all its generalized log discrepancies are strictly larger than $1$.
In the previous definitions, if the analogous statement holds for $\bM.=0$, then we drop the word ``generalized".

Let $(X,B,\bM.)$ be a generalized log canonical pair.
A {\em generalized log canonical place} of $(X,B,\bM.)$ is a divisor $E$ over $X$ for which $a_E(X,B,\bM.)=1$.
A {\em generalized log canonical center} of
$(X,B,\bM.)$ is the image on $X$ of a log canonical place of $(X,B,\bM.)$.

We say that $(X,B,\bM.)$ is {\em generalized divisorially log terminal} (or gdlt for short) if the following conditions are satisfied:
\begin{enumerate}
\item the pair $(X,B,\bM.)$ is generalized log canonical,
\item there exists an open subset $U\subset X$ for which
$(U,\lfloor B\rfloor)$ is simple normal crossing, 
\item every generalized log canonical center of $(X,B,\bM.)$ intersects $U$ non-trivially, 
\item $\bM.$ descends on $X$ on the neighborhood of every strata of $\lfloor B\rfloor$.
\end{enumerate} 
}
\end{definition}

The following lemma states that every generalized log canonical pair can be turned into a gdlt pair
by extracting certain log canonical places.

\begin{lemma}\label{lem:dlt-mod}
Let $(X,B,\bM.)$ be a generalized log canonical pair.
There exists a projective birational morphism
$Y\rightarrow X$ satisfying the following conditions:
\begin{itemize}
\item The variety $Y$ is $\qq$-factorial,
\item the generalized pair
$(Y,B_Y,\bM.)$ defined by
$K_Y+B_Y+\bM Y.=\pi^*(K_X+B+\bM X.)$ is generalized dlt, and 
\item every prime divisor $E$ on $Y$ exceptional over $X$ satisfies that 
$a_E(X,B,\bM.)=1$.
\end{itemize}
\end{lemma}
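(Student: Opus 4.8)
The plan is to construct $Y$ as the output of a relative minimal model program run on a log resolution, mimicking the usual construction of dlt modifications for ordinary pairs. First I would choose a log resolution $f\colon W\to X$ of $(X,B)$ with $W$ smooth (hence $\qq$-factorial) on which $\bM.$ descends; such a model exists because $\bM.$ is b-nef. Writing $K_W+B_W+\bM W.=f^*(K_X+B+\bM X.)$, the hypothesis that $(X,B,\bM.)$ is generalized log canonical forces every coefficient of $B_W$ to be at most $1$. I then set $\Theta_W:=f^{-1}_{*}B+E$, where $E$ is the reduced $f$-exceptional divisor. Since $\Supp(\Theta_W)$ is simple normal crossing and $\bM.$ descends on all of $W$, the triple $(W,\Theta_W,\bM.)$ is $\qq$-factorial and gdlt. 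Moreover $G:=\Theta_W-B_W$ is effective and $f$-exceptional, and $\Supp(G)$ consists precisely of those $f$-exceptional prime divisors that are not generalized log canonical places of $(X,B,\bM.)$; in particular $K_W+\Theta_W+\bM W.=f^*(K_X+B+\bM X.)+G\equiv_X G$.

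Next I would run a $(K_W+\Theta_W+\bM W.)$-MMP over $X$ with scaling of an ample divisor. Since the moduli divisor is NQC throughout this paper, the existence of the needed flips and the termination of this program follow from the minimal model theory of generalized log canonical pairs over a base (\cite{BZ16} and its extensions; in low dimensions, which is all that is used below, this is classical). The key observation is that every contracted extremal ray $R$ satisfies $G\cdot R=(K_W+\Theta_W+\bM W.)\cdot R<0$, so by the negativity lemma each flipping, flipped, and divisorially contracted locus is contained in $\Supp(G)$; hence no divisor outside $\Supp(G)$ is ever contracted, the gdlt property is preserved along the program (as in the standard dlt-modification argument), and the program terminates with a model $\pi\colon Y\to X$ on which $K_Y+\Theta_Y+\bM Y.$ is nef over $X$ and the contracted divisors are exactly the components of $G$.

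Finally, on $Y$ we have $K_Y+\Theta_Y+\bM Y.\equiv_X G_Y$ with $G_Y\ge 0$ exceptional over $X$ and nef over $X$, so the negativity lemma gives $G_Y=0$, whence $B_Y:=\Theta_Y$ satisfies $K_Y+B_Y+\bM Y.=\pi^*(K_X+B+\bM X.)$. The $\pi$-exceptional prime divisors surviving on $Y$ are then precisely the generalized log canonical places of $(X,B,\bM.)$, which is the required discrepancy condition; $Y$ is $\qq$-factorial because the MMP preserves $\qq$-factoriality; and $(Y,B_Y,\bM.)$ is gdlt because its generalized log canonical centers, the open simple-normal-crossing locus, and the descent of $\bM.$ near each stratum of $\lfloor B_Y\rfloor$ all transfer from $(W,\Theta_W,\bM.)$ along $W\dashrightarrow Y$, that map being an isomorphism near the generic point of each such stratum. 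I expect the only real obstacle to be this minimal model program step: ensuring that the existence and termination statements for NQC generalized log canonical pairs over $X$ are in force, and verifying that the descent condition for $\bM.$ in the definition of gdlt is not destroyed along the way; the remainder is bookkeeping with the negativity lemma.
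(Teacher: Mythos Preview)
The paper states this lemma without proof, treating it as a standard input from the theory of generalized pairs; generalized dlt modifications are established in the literature (for instance in \cite{BZ16} and subsequent work). Your proposal is exactly the standard construction used to prove such statements: pass to a log resolution on which $\bM.$ descends, run the $(K_W+\Theta_W+\bM W.)$-MMP over $X$ with $\Theta_W=f^{-1}_*B+E$, and use the negativity lemma to see that precisely the non-lc-place exceptional divisors are contracted. The outline is correct, and the two concerns you flag---existence and termination of the relative MMP for NQC generalized pairs, and preservation of the gdlt condition (including descent of $\bM.$ near the strata of $\lfloor B_Y\rfloor$) along the program---are indeed the only substantive inputs, both available in the references the paper already relies on. One minor point of exposition: the clause ``the contracted divisors are exactly the components of $G$'' in your second paragraph anticipates what you only establish in the third via $G_Y=0$; at that earlier stage you only know the contracted loci lie in $\Supp(G)$.
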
 

\begin{definition}
{\em 
The generalized pair $(Y,B_Y,\bM.)$ produced by the previous lemma is called a {\em generalized dlt modification} of the pair $(X,B,\bM.)$.
}
\end{definition} 

\begin{definition}
\label{def:gen-lcy}
{\em
Let $(X,B,\bM.)$ be a generalized log canonical pair.
We say that $(X,B,\bM.)$ is {\em generalized log Calabi--Yau} if
$K_X+B+\bM X.\sim_\qq 0$.
We say that $(X,B,\bM.)$ is {\em generalized Fano type} if 
there exists a big boundary $\Gamma$ such that $(X,B+\Gamma,\bM.)$ is
generalized log Calabi--Yau and generalized klt.
In the case that $\bM.=0$, then we drop
the word generalized from the previous definitions.
}
\end{definition}

We finish this subsection by introducing two lemmas that will be used in the proof of the main theorem.
We refer the reader to~\cite[Lemma 2.10]{FFMP22}.

\begin{lemma}\label{lem:FT-blow-up}
Let $(X,B,\bM.)$ be a generalized Fano type pair.
Let $\Gamma$ be an effective divisor for which
$(X,B+\Gamma,\bM.)$
is generalized log Calabi--Yau.
Let $Y\rightarrow X$ be a projective birational morphism that only extracts
divisors which generalized log discrepancy
in the interval $[0,1)$ with respect to
$(X,B+\Gamma,\bM.)$.
Then, $Y$ is of Fano type.
\end{lemma}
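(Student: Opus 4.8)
The plan is to first enlarge the boundary $\Gamma$ so that it becomes big and $(X,B+\Gamma,\bM.)$ becomes generalized klt, then to pull the generalized log Calabi--Yau structure back to $Y$ and exhibit a big boundary there, which forces $Y$ to be of Fano type. For the first reduction: since $(X,B,\bM.)$ is generalized Fano type, fix a big boundary $\Gamma_0\ge 0$ with $(X,B+\Gamma_0,\bM.)$ generalized klt and generalized log Calabi--Yau, and for small rational $t>0$ set $\Gamma_t:=(1-t)\Gamma+t\Gamma_0$. Then $\Gamma_t$ is big, and $K_X+B+\Gamma_t+\bM X.\sim_\qq 0$ since it is a convex combination of $K_X+B+\Gamma+\bM X.$ and $K_X+B+\Gamma_0+\bM X.$, both $\sim_\qq 0$. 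Because the generalized log discrepancy at a fixed divisor $E$ over $X$ is affine linear in the boundary,
\[
a_E(X,B+\Gamma_t,\bM.)=(1-t)\,a_E(X,B+\Gamma,\bM.)+t\,a_E(X,B+\Gamma_0,\bM.)>0,
\]
the first term being $\ge 0$ by generalized log canonicity of $(X,B+\Gamma,\bM.)$ and the second being positive; hence $(X,B+\Gamma_t,\bM.)$ is generalized klt and generalized log Calabi--Yau. As $t\to 0$, the generalized log discrepancy of each of the finitely many divisors extracted by $\pi\colon Y\to X$ tends to its value $<1$ with respect to $(X,B+\Gamma,\bM.)$, so it stays in $(0,1)$ for $t$ small. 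Replacing $\Gamma$ by such a $\Gamma_t$, we may assume that $\Gamma$ is big, that $(X,B+\Gamma,\bM.)$ is generalized klt and generalized log Calabi--Yau, and that the divisors $E_1,\dots,E_m$ extracted by $\pi$ satisfy $0<a_{E_i}(X,B+\Gamma,\bM.)<1$.

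Next, I would pass to the crepant pull-back on $Y$. Writing
\[
K_Y+\Theta+\bM Y.=\pi^*(K_X+B+\Gamma+\bM X.)\sim_\qq 0,
\]
the $\qq$-divisor $\Theta=\pi_*^{-1}(B+\Gamma)+\sum_{i}\bigl(1-a_{E_i}(X,B+\Gamma,\bM.)\bigr)E_i$ is effective with all coefficients along the $E_i$ lying in $(0,1)$, and $(Y,\Theta,\bM.)$ is generalized klt and generalized log Calabi--Yau since it is generalized crepant to $(X,B+\Gamma,\bM.)$. The key point is that $\Theta$ is big. Set $\varepsilon:=\min_i\bigl(1-a_{E_i}(X,B+\Gamma,\bM.)\bigr)>0$; then $\Theta\ge \pi_*^{-1}(B+\Gamma)+\varepsilon\sum_iE_i$. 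Writing $\pi^*(B+\Gamma)=\pi_*^{-1}(B+\Gamma)+\sum_i m_iE_i$ with $m_i\ge 0$, one checks that for $\delta<1$ sufficiently close to $1$ the divisor $\pi_*^{-1}(B+\Gamma)+\varepsilon\sum_iE_i-(1-\delta)\pi^*(B+\Gamma)$ is effective, so $\pi_*^{-1}(B+\Gamma)+\varepsilon\sum_iE_i$ is big because $(1-\delta)\pi^*(B+\Gamma)$ is big; here $\pi^*(B+\Gamma)$ is big since $B+\Gamma$ is big on $X$ and $\pi$ is birational. Hence $\Theta$ is big.

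Finally, $Y$ is of Fano type because it is the underlying variety of the generalized klt, generalized log Calabi--Yau pair $(Y,\Theta,\bM.)$ whose boundary $\Theta$ is big; this last implication is standard and follows from the minimal model program for NQC generalized pairs (writing $\Theta\sim_\qq A+G$ with $A$ ample and $G\ge 0$ and perturbing to $(Y,(1-t)\Theta+tG,\bM.)$ produces, after passing to a model where $\bM.$ descends, a klt pair with ample anti-log-canonical class). I expect the bigness of $\Theta$ to be the main point: this is exactly where the hypothesis ``interval $[0,1)$'', rather than $[0,1]$, enters, since the strict inequality $a_{E_i}<1$ is what allows the exceptional divisors to be absorbed into a positive multiple of the big class $\pi^*(B+\Gamma)$ --- if one allowed $a_{E_i}=1$, the divisor $\Theta$ could be merely pseudo-effective. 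The first reduction is needed because a priori $\Gamma$ need be neither big nor such that $(X,B+\Gamma,\bM.)$ is generalized klt, and the last step is a standard consequence of the theory of generalized pairs.
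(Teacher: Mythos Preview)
The paper does not supply its own proof of this lemma; it simply refers the reader to \cite[Lemma~2.10]{FFMP22}. Your argument is correct and is essentially the standard proof one finds in that reference: perturb $\Gamma$ by a convex combination with the big boundary $\Gamma_0$ coming from the generalized Fano type hypothesis so that $(X,B+\Gamma,\bM.)$ becomes generalized klt with $\Gamma$ big while the extracted discrepancies remain in $(0,1)$; pull back crepantly to $Y$ to obtain a generalized klt log Calabi--Yau pair $(Y,\Theta,\bM.)$; and observe that $\Theta$ is big because every exceptional divisor appears with strictly positive coefficient, so $\Theta$ dominates a small positive multiple of the big pullback $\pi^*(B+\Gamma)$ plus an effective divisor. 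Your identification of the strict inequality $a_{E_i}<1$ as the place where the hypothesis on the interval $[0,1)$ is used is exactly right, and the final implication (generalized klt log Calabi--Yau with big boundary implies the underlying variety is of Fano type) is, as you say, a standard fact in the theory of generalized pairs.
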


\begin{lemma}\label{lem:push-torsion}
Let $\pi\colon X\rightarrow Y$ be a projective birational morphism.
Let $\bM.$ be a b-nef divisor on $X$.
Assume that $\bM X.$ is not torsion,
then the trace of $\bM.$ on $Y$ is not torsion.
\end{lemma}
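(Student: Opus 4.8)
The plan is to argue by contraposition: assuming the trace $\bM Y.$ is torsion, I will show that $\bM X.$ is torsion. First I would reduce to a model where $\bM.$ descends. Since $\bM.$ is b-nef, it descends on some birational model $W_0$ of $X$; choosing a normal projective variety $W$ admitting projective birational morphisms $p\colon W\to X$ and $W\to W_0$ (e.g.\ a common resolution), the b-divisor $\bM.$ also descends on $W$, so $\bM W.$ is nef, and one obtains a projective birational morphism $q\coloneqq\pi\circ p\colon W\to Y$. By the defining compatibility of b-divisors with pushforward, $p_*\bM W.=\bM X.$ and $q_*\bM W.=\bM Y.$. Hence it is enough to prove that $\bM W.$ is torsion, because if $m\bM W.=\dv(f)$ with $f\in\kk(W)^{\times}=\kk(X)^{\times}$, then pushing forward by $p$ yields $m\bM X.=\dv(f)\sim 0$.

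Now suppose $m\bM Y.\sim 0$ for some positive integer $m$. Then $\bM Y.$ is $\qq$-Cartier and numerically trivial, so $q^*\bM Y.$ is defined and $q^*\bM Y.\sim_\qq 0$. Set $\widetilde D\coloneqq q^*\bM Y.-\bM W.$. Since $-\widetilde D\sim_\qq\bM W.$ is nef (in particular $q$-nef) and $q_*\widetilde D=\bM Y.-\bM Y.=0$, the negativity lemma gives $\widetilde D\ge 0$; moreover $\widetilde D$ is $q$-exceptional, so its support is a proper closed subset of $W$. Let $H_W$ be ample on $W$, let $n=\dim X$, and let $C$ be a general complete intersection of $n-1$ general members of a very ample multiple of $H_W$; then $C\not\subset\supp\widetilde D$, $C$ is not contracted by $q$, and the numerical class of $C$ is a positive multiple of $H_W^{\,n-1}$. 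Using the projection formula and $\bM Y.\equiv 0$,
\[
0\le\widetilde D\cdot C=q^*\bM Y.\cdot C-\bM W.\cdot C=\bM Y.\cdot q_*C-\bM W.\cdot C=-\,\bM W.\cdot C\le 0,
\]
the last inequality because $\bM W.$ is nef. Hence $\widetilde D\cdot C=0$, so $\widetilde D\cdot H_W^{\,n-1}=0$, and since $\widetilde D$ is effective and $H_W$ is ample this forces $\widetilde D=0$. Thus $\bM W.=q^*\bM Y.$, and from $q^*(m\bM Y.)\sim 0$ we conclude that $\bM W.$ is torsion; by the previous paragraph, so is $\bM X.$.

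The step I expect to be the crux is the middle one: upgrading the numerical triviality of $\bM W.$ (which drops out of the intersection computation) to the exact identity $\bM W.=q^*\bM Y.$, that is, showing that a b-nef divisor whose trace on a lower birational model is numerically trivial must actually descend on that model. This is precisely where the b-nef hypothesis enters, through the negativity lemma, and it is also why one should not expect the statement to hold for b-divisors that are not b-nef.
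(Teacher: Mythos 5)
Your proof is correct, and it follows the same basic strategy as the paper's: apply the negativity lemma to the difference $q^*\bM Y.-\bM W.$ on a model where $\bM.$ descends and dominates $X$, then conclude by intersecting with curves. The organization is a bit cleaner than the printed proof, though. By arguing contrapositively and passing to a model $W$ that dominates $X$, you avoid the paper's preliminary step, in which the negativity lemma is applied a second time (along $X'\to X$) to show that $\bM X.$ non-torsion forces $\bM X'.$ non-torsion; that step implicitly requires $\bM X.$ to be $\rr$-Cartier, whereas your route only ever pulls back $\bM Y.$, whose $\qq$-Cartier property is supplied by the torsion hypothesis. You also replace the paper's final case distinction (exceptional divisor zero versus nonzero) with the single observation that an effective $q$-exceptional divisor orthogonal to $H_W^{\,n-1}$ must vanish, from which $\bM W.=q^*\bM Y.$ and hence the torsion of $\bM X.$ drop out at once. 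These are genuine streamlinings in the bookkeeping, but the key lemma (negativity) and the intersection-theoretic conclusion are the same as in the paper.
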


\begin{proof}
Let $\phi \colon X'\rightarrow X$ be a model where $\bM.$ descends.
By the negativity lemma, we can write
$\phi^*\bM X.=\bM X'.+F$ where $F$ is an effective divisor. 
Since $\bM X.$ is not torsion, we may find a movable curve $C$ on $X$ for which $\bM X.\cdot C\neq 0$. 
We may choose $C$ so that it avoids
$\phi(F)$. 
Thus, if $C'$ is the strict transform of $C$ on $X'$, then we have that 
$\bM X'.\cdot C'\neq 0$.
We conclude that $\bM X'.$ is not torsion. 
In particular, $\bM X'.$ is a nef Cartier divisor which is not torsion.

Assume that $\bM Y.$ is torsion.
We will lead to a contradiction.
Let $\pi'\colon X'\rightarrow Y$ be the associated projective birational morphism.
By the negativity lemma, we can write 
${\pi'}^*\bM Y.=\bM X'. + E$ where $E$ is an effective exceptional divisor.
If $E=0$, then $\bM X'.$ is torsion, leading to a contradiction.
If $E$ is a non-trivial effective divisor, we can find a curve $C'$ in $X'$ such that $E\cdot C'>0$.
The intersection of ${\pi'}^*\bM Y.$ with $C'$ must be trivial, so the intersection of $\bM X'.$ with $C'$ must be negative. This is a contradiction, as $\bM X'.$ is nef. This finishes the proof.
\end{proof} 

\subsection{Generalized complexity}
In this subsection, we introduce the generalized complexity of a generalized pair.

\begin{definition}
{\em
Let $X$ be a normal projective variety.
An {\em orbifold structure} of $X$ is a function 
$n$ from the set $X^1$ of prime divisors on $X$ to the natural numbers
which is one for all but finitely many prime divisors. 
The value of $n$ at a prime divisor $P$ is denoted by $n_P$.
An {\em orbifold Weil divisor} on $X$ is a divisor on $X$ that is a finite formal combination of the $\qq$-divisors $P/n_P$ for each $P$ prime on $X$.
}
\end{definition}

\begin{definition}
{\em 
Let $X$ be a normal projective variety and $B$ an effective divisor on $X$.
A {\em decomposition} of $B$ is a finite sum of the form
\[
\Sigma_B=\sum_{P\in X^1} \left(1-\frac{1}{n_P}\right)P
+ \sum_{i=1}^k a_i B_i \leq B. 
\]
where each $B_i$ is an orbifold Weil divisor on $X$.
The {\em norm of the decomposition} $\Sigma_B$, denoted by $|\Sigma_B|$
is the nonnegative number $\sum_{i=1}^k a_i$.
The divisors $B_i$ are called the {\em components} of the decomposition $\Sigma_B$.
}
\end{definition}

\begin{definition}
{\em 
Let $X$ be a normal projective variety.
Let $\bM.$ be a b-nef divisor on $X$.
Let $\pi\colon Y\rightarrow X$ be a model where $\bM.$ descends.
A {\em decomposition} of $\bM.$ is a finite sum of the form
\[
\Sigma_{\bM.} = \sum_{i=1}^\ell \lambda_i M_i \leq \bM Y.,
\] 
where each $\lambda_i$ is a non-negative real number, 
each $M_i$ is a nef divisor on $Y$, and 
no $M_i$ is torsion.
We may identify the decomposition of the b-nef divisor
with its push-forward on $X$, i.e., we may write
\[
\Sigma_{\bM X.} := \sum_{i=1}^\ell \lambda_i \pi_*M_i \leq \bM X..
\]
The {\em components} of $\Sigma_{\bM X.}$ are the divisors
$\pi_*M_1,\dots,\pi_*M_\ell$.
The {\em norm} of $\Sigma_{\bM X.}$, denoted by $|\Sigma_{\bM X.}|$ is the sum of the coefficients $\lambda_i$ for which $\pi_*M_i$ is not torsion.
}
\end{definition}

\begin{remark}
{\em
Observe that our definition of decomposition for b-nef divisors assumes that no component is torsion.
Due to Lemma~\ref{lem:push-torsion}, the norm $|\Sigma_{\bM X.}|$ is independent of the model of $X$, i.e., 
$|\Sigma_{\bM X.}|=|\Sigma_{\bM Y.}|$ for every birational model $Y$ of $X$.
Hence, we can just write $|\Sigma_{\bM.}|$ for this value.
}
\end{remark}

\begin{definition}
{\em 
Let $(X,B,\bM.)$ be a generalized pair.
A {\em decomposition} $\Sigma$ on the generalized pair consists of a pair
$(\Sigma_B,\Sigma_{\bM.})$ where $\Sigma_B$ is a decomposition of $B$
and $\Sigma_{\bM.}$ is a decomposition of $\bM.$.
The {\em norm} of $\Sigma$, denoted by $|\Sigma|$
is just the sum of the norm of $\Sigma_B$
and the norm of $\Sigma_{\bM.}$.
The {\em span} of the decomposition, denoted by $\langle \Sigma \rangle$ is defined to be
\[
\langle B_1,\dots,B_k, \pi_*M_1,\dots, \pi_*M_\ell \rangle \leqslant N^1(X)_\rr, 
\] 
where the $B_i$'s are the components of $\Sigma_B$
and the $\pi_*M_i$'s are the components of $\Sigma_{\bM.}$.
The {\em Picard rank} of the decomposition $\Sigma$, denoted by $\rho(\Sigma)$, is just the rank of the span $\langle \Sigma\rangle$ as an $\rr$-vector space.
}
\end{definition}

\begin{definition}
\label{def:decomp}
{\em 
Let $(X,B,\bM.)$ be a generalized pair with an orbifold structure $n$ 
and $\Sigma$ be a decomposition. 
The {\em orbifold complexity} of $(X,B,\bM.;\Sigma)$,
denoted by $\hat{c}(X,B,\bM.;\Sigma)$, and defined to be 
\[
\dim X +\rho(\Sigma) - |\Sigma|.
\] 
The {\em fine complexity} of a generalized pair $(X,B,\bM.)$, denoted by $\hat{c}(X,B,\bM.)$, is the minimum
among all the complexities of all
orbifold structures $n$ and decompositions of $\Sigma$.
The {\em absolute complexity} of a generalized pair $(X,B,\bM.)$,
denoted by $\bar{c}(X,B,\bM.)$, is the minimum among all the complexities with the trivial orbifold structure. 
The {\em classic complexity} of
a generalized pair $(X,B,\bM.)$,
denoted by 
$c(X,B,\bM.)$ 
is the minimum of 
\[
\dim X + \rho(X) - |\Sigma|,
\]
among all the decompositions $\Sigma$
with the trivial orbifold structure.
}
\end{definition} 

\begin{remark}
{\em 
Let $(X,B,\bM.)$ be a generalized pair.
The following inequalities follow from the definitions:
\[
c(X,B,\bM.) \geq 
\bar{c}(X,B,\bM.) \geq
\hat{c}(X,B,\bM.). 
\]
}
\end{remark}

\begin{remark}
\label{rem:nqc-pairs}
{\em
A b-nef divisor $\bM.$ admitting a non-trivial decomposition
$\Sigma_{\bM.}$ is known as ${\rm NQC}$-divisor in the literature.
Throughout this work, we only consider b-nef divisors admitting
non-trivial decompositions.
}
\end{remark}

\begin{remark}
{\em 
The classic complexity was defined by Shokurov in~\cite{Sho00}.
Shokurov expected that this invariant could characterize toric morphism. This statement was later proved in the projective case in~\cite{BMSZ18}.
One issue with the definition of the classic complexity is that it does not behave well when restricted to the general fiber of a morphism.
Meaning that, even if the classic complexity of a pair $(X,B)$ is bounded above, 
we may find a fibration $X\rightarrow Z$ so that the general fiber $(F,B_F)$ has large classic complexity. 
This issue was fixed with the definition of fine complexity in~\cite{BMSZ18}.
On the other hand, the fine complexity does not behave well under adjunction.
In~\cite{MS21}, the authors introduced the orbifold complexity to remedy this issue. 
}
\end{remark}

\subsection{Generalized complexity under adjunction} In this subsection, we study how the generalized complexity behaves under adjunction.
 
\begin{proposition}\label{prop:gen-comp-under-adj}
Let $(X,B,\bM.)$ be a generalized pair
with a decomposition $\Sigma$.
Let $S$ be a prime component of $\lfloor B\rfloor$. Assume that $S$ is normal.
Assume that the restriction
of every component of $\Sigma$ to $S$ 
is not numerically trivial.
Assume that $S$ appears with coefficient $1$ in the boundary
decomposition of $\Sigma$.
Let $(S,B_S,\bM S.)$ be the generalized pair obtained by adjunction.
Then, there exists a decomposition
$\Sigma_S$ of $(S,B_S,\bM S.)$ for which
\[
\hat{c}(S,B_S,\bM S.;\Sigma_S)
\leq 
\hat{c}(X,B,\bM.;\Sigma).
\]
\end{proposition}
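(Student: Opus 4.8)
The plan is to use the generalized adjunction formula to transport the decomposition $\Sigma$ on $(X,B,\bM.)$ to a decomposition $\Sigma_S$ on $(S,B_S,\bM S.)$, and then to bound the change in each of the three terms $\dim$, $\rho(\Sigma)$, $|\Sigma|$ under restriction to $S$. Recall that adjunction gives $K_S + B_S + \bM S. = (K_X + B + \bM X.)|_S$, where $B_S$ is the different and $\bM S.$ is the restriction of the b-nef divisor $\bM.$ (descending on the corresponding model over $S$); the coefficients of $B_S$ along the various prime divisors of $S$ are determined by the coefficients of $B$ along the components meeting $S$ together with the singularities of $X$ along $S$. Since the dimension drops by exactly one, the entire content of the proposition is the inequality $\rho(\Sigma) - |\Sigma| \geq \rho(\Sigma_S) - |\Sigma_S| - 1$, i.e. $\rho(\Sigma) - \rho(\Sigma_S) \geq |\Sigma| - |\Sigma_S| - 1$.

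First I would construct $\Sigma_S$ concretely. For the boundary part: write $\Sigma_B = \sum_P (1 - 1/n_P) P + \sum_i a_i B_i$ with $S$ occurring with coefficient $1$ among the $B_i$ (say $B_1 = S$, $a_1 = 1$); restrict each remaining component $B_i|_S$ and each $(1-1/n_P)P|_S$ to $S$, and absorb the different $B_S$ (which dominates all these restrictions, since $\Sigma_B \leq B$) together with an induced orbifold structure $n_S$ on $S$. For the moduli part: pass to the model $Y$ over $X$ where $\bM.$ descends and where $S$ has a normalization $S_Y$ mapping to $S$; the components $M_i$ of $\Sigma_{\bM.}$ restrict to nef divisors $M_i|_{S_Y}$, and by the hypothesis that no component of $\Sigma$ is numerically trivial on $S$, these restrictions are non-torsion, so $\Sigma_{\bM S.} := \sum_i \lambda_i (M_i|_{S_Y})$ is a bona fide decomposition of $\bM S.$ with the same coefficients $\lambda_i$. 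Consequently $|\Sigma_{\bM S.}| = |\Sigma_{\bM.}|$, and $|\Sigma_B| - |\Sigma_{B_S}| \leq a_1 = 1$ (we lose exactly the coefficient of $S$ itself, and nothing else, because the non-triviality hypothesis ensures the other components survive with their coefficients), so $|\Sigma| - |\Sigma_S| \leq 1$.

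It then remains to show $\rho(\Sigma) \geq \rho(\Sigma_S)$, i.e. that restriction $N^1(X)_\rr \to N^1(S)_\rr$ (or rather the composite $N^1(X)_\rr \to N^1(Y)_\rr \to N^1(S_Y)_\rr$ on the moduli components) does not increase the dimension of the span of the listed components. This is immediate: the span $\langle \Sigma_S \rangle$ is contained in the image of $\langle \Sigma \rangle$ under this linear restriction map (each generator of $\langle \Sigma_S\rangle$ is the restriction of a generator of $\langle \Sigma\rangle$, modulo the generator $[S]$ which maps into the span of the others via adjunction after accounting for the different), hence $\rho(\Sigma_S) = \dim \langle \Sigma_S \rangle \leq \dim \langle \Sigma \rangle = \rho(\Sigma)$. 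Combining, $\hat c(S,B_S,\bM S.;\Sigma_S) = (\dim X - 1) + \rho(\Sigma_S) - |\Sigma_S| \leq (\dim X - 1) + \rho(\Sigma) - (|\Sigma| - 1) = \dim X + \rho(\Sigma) - |\Sigma| = \hat c(X,B,\bM.;\Sigma)$.

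The main obstacle I anticipate is the bookkeeping in the moduli part: one must check that the restriction of $\bM.$ to $S$ in the sense of generalized adjunction genuinely agrees with $\sum_i \lambda_i \pi_* (M_i|_{S_Y})$ as a decomposition, that the $M_i|_{S_Y}$ descend on the right model over $S$, and that non-torsionness is preserved — here the hypothesis that no component is numerically trivial on $S$ is exactly what is needed, via an argument parallel to Lemma~\ref{lem:push-torsion} (a nef non-torsion divisor restricted to a subvariety on which it is numerically nontrivial stays non-torsion). A secondary subtlety is ensuring the different $B_S$ really does absorb all the restricted boundary components so that $\Sigma_{B_S} \leq B_S$; this uses $\Sigma_B \leq B$ together with the fact that adjunction only adds to the coefficients coming from $B$, never subtracts.
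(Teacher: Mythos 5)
Your overall plan is right, and the $\rho$-bookkeeping and the norm inequality $|\Sigma|-|\Sigma_S|\le 1$ are the correct targets, but there is a genuine gap in the moduli step. You claim that the hypothesis ``no component of $\Sigma$ restricts to a numerically trivial class on $S$'' implies that each $M_i|_{S_Y}$ is non-torsion, where $S_Y$ is the strict transform of $S$ on a model $Y$ where $\bM.$ descends. That implication is false. The hypothesis concerns the restriction on $X$ of the component $\pi_*M_i$, while your decomposition needs the restriction of $M_i$ on the higher model $Y$. Writing $\pi^*(\pi_*M_i)=M_i+E_i$ with $E_i\ge 0$ $\pi$-exceptional and restricting to $S_Y$ gives $\pi_S^*\bigl((\pi_*M_i)|_S\bigr)=M_i|_{S_Y}+E_i|_{S_Y}$, so it can easily happen that $M_i|_{S_Y}\equiv 0$ while $(\pi_*M_i)|_S\not\equiv 0$ because the numerical class is carried entirely by $E_i|_{S_Y}$. (Concretely: $X=\pp^2$, $S$ a line, $Y$ the blow-up at a point $p\in S$, $M_i$ the strict transform of a second line through $p$; then $M_i|_{S_Y}=0$ but $(\pi_*M_i)|_S$ is a point on $\pp^1$.) If you simply drop such $M_i$ from $\Sigma_{\bM S.}$, you lose $\lambda_i$ from the norm and the inequality $|\Sigma|-|\Sigma_S|\le 1$ breaks.

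The paper's proof patches exactly this hole: it splits the indices into those with $M_i|_{S_Y}$ non-torsion (which go into the moduli decomposition $\Sigma_{\bM S.}$ as you propose) and those with $M_i|_{S_Y}$ torsion, and for the latter it adds $\lambda_i\,{\pi_S}_*(E_i|_{S_Y})$ to the boundary decomposition $\Sigma_{B_S}$. The non-numerical-triviality hypothesis is then used, not to rule this case out, but to show that $E_i\cap S_Y$ contains a divisor non-exceptional over $S$ (otherwise $(\pi_*M_i)|_S$ would kill every movable curve), so this boundary contribution is genuinely non-trivial and the coefficient $\lambda_i$ is recovered in $|\Sigma_{B_S}|$. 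That $\Sigma_{B_S}\le B_S$ then follows from the generalized adjunction formula, since the different $B_S$ absorbs these moduli-theoretic excesses. You also gloss over the careful choice of orbifold structure $m$ on $S$ (built from Cartier indices and the cases where one or two boundary components with $n_P\ge 2$ pass through a prime of $S$), which is needed to make $\Sigma_{B_S}\le B_S$ precise, but the torsion issue is the step that would actually make your argument fail.
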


\begin{proof}
Let $n$ be the orbifold structure on $X$
associated to $\Sigma$.
We write 
\[
\Sigma_B=\sum_{P\in X^1}\left(1-\frac{1}{n_P}\right)P+\sum_{i=1}^k b_iB_i,
\] 
be the corresponding decomposition of the boundary divisor. 
Let 
\[
\Sigma_{\bM.}=\sum_{i=1}^\ell \lambda_i M_i,
\]
be the decomposition of the moduli divisor. 
Let $Q\in E^1$. Observe that one of the following holds.
\begin{itemize}
    \item[(a)] There is a unique prime divisor $P\in X^1$ with $n_P>1$ containing $Q$.
    \item[(b)] If there are two prime divisors $P_1,P_2$ in $X^1$ with
    $n_{P_1}=n_{P_2}=2$ containing $Q$.
\end{itemize}
We denote by $\mathcal{S}$ the prime divisors $Q\in E^1$ satisfying $(b)$.
Given $Q\in E^1$, we denote by $i_Q$ the Cartier index of $E$ at the generic point of $Q$.
We define an orbifold structure $m$ on $S$ as follows.
We set $m_Q=i_Q$ if $Q$ is not in the support of $n$,
$m_Q=n_Qi_Q$ if $(a)$ holds, and
$m_Q=1$ if $(b)$ holds.

Let $\pi\colon Y\rightarrow X$ be a log smooth model where
$\bM.$ descends. Let $S_Y$ be the strict transform of $S$ on $Y$. 
We denote by $\pi_S\colon S_Y\rightarrow S$ the corresponding birational morphism.
We write $M_{X,i}:=\pi_*M_i$. 
Up to reordering the divisors $M_1,\dots,M_\ell$, we may assume
that $M_1|_{S_Y},\dots,M_j|_{S_Y}$ are non-torsion
and $M_{j+1}|_{S_Y},\dots,M_\ell|_{S_Y}$ are torsion.
We define a decomposition 
\[
\Sigma_{\bM S.}=\sum_{i=1}^j \lambda_i M_i|_{S_Y} \equiv 
\bM S_Y.,
\] 
of the moduli divisor of $\bM S.$.
For $i\in \{j+1,\dots,\ell\}$, we write
\[
\pi^*M_{X,i} = M_i + E_i,
\]
where $E_i$ is an effective divisor. 
We claim that $E_i\cap S_Y$ contains a non-exceptional divisor
over $S$.
Indeed, assume the opposite holds, i.e., 
$E_i\cap S_Y$ is exceptional over $S$.
Let $C$ be a general curve in $S$
and $C'$ be its strict transform on $S_Y$.
Then, we have that $C'\cdot E_i=0$.
On the other hand, since we are assuming that
$M_i|_{S_Y}$ is torsion, we have that
$C'\cdot M_i=0$.
By the projection formula, we conclude that
$M_{X,i}\cdot C=0$. 
Thus, $M_{X,i}$ intersects trivially every movable curve.
This leads to a contradiction.

Let $i_S\colon S\rightarrow X$ be the inclusion morphism.
We consider the decomposition 
\[
\Sigma_{B_S}:=
\sum_{Q\in S^1}\left( 
1-\frac{1}{m_Q} 
\right) Q
+
\sum_{Q\in \mathcal{S}} Q
+
\sum_{i=1}^k b_i B_i|_S 
+
\sum_{i=j+1}^\ell \lambda_i {\pi_S}_*(E_i|_{S_Y}).
\] 
By the generalized adjunction formula~\cite{BZ16},
we know that $\Sigma_{B_S}\leq B_S$.
On the other hand, by construction, we know that
$\Sigma_{B_S}$ is a decomposition
with respect to the orbifold structure $m$.

We consider the decomposition
$\Sigma_S$ given by
$(\Sigma_{\bM S.},\Sigma_{B_S})$.
By construction, we have that 
\[
|\Sigma_S|=|\Sigma_{\bM S.}|+|\Sigma_{B_S}| =
\left(\sum_{i=1}^j \lambda_i \right) 
+
\left( \sum_{i=1}^k b_i + \sum_{i=j+1}^\ell \lambda_i \right) + |\mathcal{S}| \geq |\Sigma|-1.
\]
On the other hand, observe that 
\[
\rho(\Sigma_S)=\rank\left( 
\langle B_1|_S,\dots,B_k|_S, 
M_{X,1}|_S,\dots, 
M_{X,j}|_S, 
{\pi_S}_*(E_{j+1}|_{S_Y}),
\dots 
{\pi_S}_*(E_\ell|_{S_Y})
\rangle 
\right).
\]
Note that ${\pi_S}_*(E_i|_{S_Y})$
is numerically equivalent to $M_{X,i}|_S$ for
every $i\in \{j+1,\dots,\ell\}$.
Indeed, this follows from the fact that
$M_i|_{S_Y}$ is torsion for every
$i\in \{j+1,\dots,\ell\}$.
Hence, we conclude that 
\[
\rho(\Sigma_S)=\rank\left(
\langle 
B_1|_S,\dots,B_k|_S, 
M_{X,1}|_S,\dots,M_{X,\ell}|_S 
\rangle 
\right) 
\leq 
\rank\left(
\langle 
B_1,\dots,B_k|_S, 
M_{X,1},\dots,M_{X,\ell}|_S 
\rangle 
\right) 
= \rho(\Sigma).
\] 
We conclude that 
\[ 
\hat{c}(S,B_S,\bM S.;\Sigma_S) = 
\dim S + \rho(\Sigma_S) - |\Sigma_S| 
\leq  
\dim X -1 + \rho(\Sigma) - (|\Sigma|-1)
= \hat{c}(X,B,\bM.;\Sigma). 
\]
This proves the proposition.
\end{proof}

\begin{remark}
\label{rem:irreducible-comp}
{\em 
In the previous proof, if $B_i|_S$ is not irreducible
for some $i$ and $\rho(\Sigma_S)=\rho(S)$, then 
we may find a decomposition 
$\Sigma_S$ for which 
\[
\hat{c}(S,B_S,\bM S.;\Sigma_S)<\hat{c}(X,B,\bM.;\Sigma).
\]
}
\end{remark}

\section{Generalized complexity for surfaces}

In this section, we prove the following theorem 
about the generalized complexity.

\begin{theorem}\label{thm:gen-log-CY-comp}
Let $(X,B,\bM.)$ be a generalized log Calabi--Yau pair of dimension $2$. The following statements hold.
\begin{enumerate}
    \item The inequality $\hat{c}(X,B,M)\geq 0$ holds.
    \item If $\hat{c}(X,B,\bM.)=0$, then $(X,\lfloor B\rfloor)$ is a toric pair.
    \item If $\hat{c}(X,\bM.)=0$, then either $X\simeq \pp^2$, $X\simeq \pp^1\times \pp^1$, or $X\simeq F_n$.
    \item If $\hat{c}(X,\bM.)=0$ and $(X,\bM.)$ is generalized klt, then $X\simeq \pp^2$ or $X\simeq \pp^1 \times \pp^1$.
\end{enumerate}
In particular, if $\hat{c}(X,\bM.)=0$ and $\bM.$ descends on $X$, then $X$ is isomorphic
to a product of projective spaces.
Furthermore, if $\hat{c}(X,B,\bM.)=0$, then the components of $\Sigma$ span $N^1_\rr(X)$, i.e., $\rho(X)=\rho(\Sigma)$.
\end{theorem}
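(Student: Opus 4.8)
\emph{Strategy.} The plan is to prove all the assertions together by reducing, through operations that never increase the orbifold complexity, to a short list of explicit surfaces, and then propagating the conclusions back. The moves are: (i) replacing $(X,B,\bM.)$ by a $\qq$-factorial generalized dlt modification $Y\to X$ (Lemma~\ref{lem:dlt-mod}); (ii) adjunction to a prime component of $\lfloor B\rfloor$ (Proposition~\ref{prop:gen-comp-under-adj}); and (iii) a step of a $K_X$-minimal model program, available because $-K_X\equiv B+\bM X.$ is pseudo-effective. For (i), the key point is that $\hat c$ is preserved together with the span of an optimal decomposition: the extracted divisors are generalized log canonical places, so they enter the boundary decomposition with coefficient $1$ and raise $\rho(\Sigma)$ and $|\Sigma|$ equally; and $Y\to X$ is crepant for generalized log Calabi--Yau pairs, so a toric or isomorphism-type conclusion on $Y$ descends to $X$. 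For (iii), one checks that a $K_X$-negative divisorial contraction cannot raise $\hat c$: the moduli part of $\Sigma$ is unchanged, any contracted component of $\Sigma$ must be a boundary component of coefficient $\le 1$ (glc), it is removed with a drop of one in $\rho(\Sigma)$, and Lemma~\ref{lem:push-torsion} shows no non-torsion moduli component is contracted; the program terminates on $\pp^2$, on a Hirzebruch surface fibered over $\pp^1$, or on a $K$-trivial model, the last yielding $\hat c\ge 2$ and being irrelevant to the equality analysis.

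\emph{The inequality and the span statement.} After (i) we may assume $X$ is $\qq$-factorial and gdlt. If $\lfloor B\rfloor\neq 0$, pick a prime component $S$, replace $\Sigma$ if necessary by another decomposition of the same complexity realizing the hypotheses of Proposition~\ref{prop:gen-comp-under-adj}, and pass by adjunction to a $1$-dimensional generalized log Calabi--Yau pair with $\hat c(S,B_S,\bM S.)\le\hat c(X,B,\bM.)$; on a curve the bound $\dim S+\rho(\Sigma_S)-|\Sigma_S|\ge 0$ follows from $K_S+B_S+\bM S.\sim_\qq 0$ and $\rho(S)\le 1$ by an elementary degree count, and equality forces $S\simeq\pp^1$ with two marked points spanning $N^1(S)_\rr$. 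If $\lfloor B\rfloor=0$, apply (iii) to reach a Mori fiber space: on $\pp^2$ one has $\rho(\Sigma)\le 1$ and $|\Sigma|\le 3$ from $K+B+\bM X.\sim_\qq 0$; on a conic bundle $X'\to T$ the canonical bundle formula gives a $1$-dimensional generalized log Calabi--Yau base pair on $T$ together with a superadditivity $\hat c(X')\ge\hat c(F,B_F,\bM F.)+\hat c(T,B_T,\bM T.)\ge 0$ for a general fiber $F\simeq\pp^1$. Reading the chain backwards gives $\hat c(X,B,\bM.)\ge 0$; and if $\hat c(X,B,\bM.)=0$ then every inequality is an equality, the surface at the bottom carries a decomposition spanning its N\'eron--Severi group, and since the three moves are span-preserving we recover $\rho(X)=\rho(\Sigma)$, which is the final ``furthermore''.

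\emph{Toricity.} Assume $\hat c(X,B,\bM.)=0$, so the reduction is equality-preserving throughout. At the bottom the pair is manifestly toric: two marked points on $\pp^1$, or the torus-invariant divisors on $\pp^2$ or a Hirzebruch surface. It remains to lift. Following~\cite{BMSZ18}, when move (ii) or (iii) is reversed --- extracting a log canonical place, or an extremal blow-up --- the newly created divisor must be a component of an optimal decomposition at that stage, since $\hat c$ stays $0$, and the only modifications of a toric generalized log Calabi--Yau pair with this property are toric morphisms; inducting on the number of steps gives that $(X,\lfloor B\rfloor)$ is toric. I expect this lifting step to be the main obstacle: it is the technical core of the characterization of toric varieties, and here it must be carried out in the generalized setting, tracking how $\bM.$ and the possible torsion of its components (governed by Lemma~\ref{lem:push-torsion}) interact with adjunction, with the canonical bundle formula, and with the extremal contractions.

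\emph{The cases $B=0$.} For (3), assume $\hat c(X,0,\bM.)=0$; by toricity $X$ is a toric surface, and a short computation using $-K_X\sim_\qq\bM X.$ and $\Cl(X)$ shows the only toric surfaces admitting such a structure with trivial boundary are $\pp^2$, $\pp^1\times\pp^1$, and the rank-one cones $F_n$. For (4), add the generalized klt hypothesis: on $F_n$ with $n\ge 2$, writing an optimal decomposition on the minimal resolution $\Sigma_n\to F_n$ and using $\Pic(F_n)=n\zz\cdot D$, $\rho(F_n)=1$ and $|\Sigma|=3$ forces the negative section to appear in the induced boundary with coefficient exactly $1$, so $(F_n,0,\bM.)$ is not generalized klt; hence only $\pp^2$ and $\pp^1\times\pp^1$ survive, and $X$ is a product of projective spaces. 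Finally, if moreover $\bM.$ descends on $X$, the same $\Pic(F_n)$ analysis --- now every moduli component is pulled back from $F_n$, hence its image is a multiple of $nD$ --- gives $|\Sigma|\le 1+2/n<3$, whence $\hat c(F_n,0,\bM.)>0$, so $F_n$ is excluded and $X$ is again a product of projective spaces, which is the ``in particular'' clause.
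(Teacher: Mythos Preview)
Your strategy diverges from the paper's in a fundamental way, and the divergence is exactly where the gaps lie. The paper's engine for (1) and (2) is Proposition~\ref{prop:theorem-FT-case}: on a Fano type surface one replaces each nef Cartier component $M_{Z,i}$ of $\bM.$ by a general effective divisor $\Gamma_{Z,i}\sim M_{Z,i}$ (using non-vanishing, Lemma~\ref{lem:non-vanishing-surfaces}, together with the lifting and non-klt-avoidance of Lemmas~\ref{lem:lifting-surfaces} and~\ref{lem:not-containing-non-klt}), so that $(X,B+\Gamma)$ becomes an honest log Calabi--Yau \emph{pair} with the same orbifold complexity and the same span; then \cite{MS21} gives both $\hat{c}\ge 0$, toricity, and $\rho(\Sigma)=\rho(X)$ in one stroke. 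You never make this conversion and instead try to carry the moduli divisor through adjunction and the MMP directly. That forces you to confront precisely the step you flag as ``the main obstacle'': toricity on the target of an adjunction or at the end of an MMP does not obviously pull back to toricity of $(X,\lfloor B\rfloor)$ in the generalized setting, and nothing in your outline supplies that argument. The paper sidesteps this entirely, because once $\bM.$ has been traded for boundary the lifting is already inside \cite{MS21}; the remaining work (subsections~\ref{subsec:spanning-rho} and~\ref{subsec:reduct-FT}) is to show that the MMP lands on a Fano type surface whenever $\hat{c}=0$, via Lemma~\ref{lem:decomp-contraction} and the delicate Claim in the proof of Proposition~\ref{prop:part1-2}.

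Two further concrete problems. First, your superadditivity claim $\hat{c}(X')\ge \hat{c}(F,B_F,\bM F.)+\hat{c}(T,B_T,\bM T.)$ over a conic bundle is asserted but not proved, and it is not clear: the canonical bundle formula produces a moduli part on $T$ that mixes the original $\bM.$ with the variation of fibers, and decompositions do not split cleanly between base and fiber; the paper never uses such a statement. Second, your treatment of (3) and (4) is too light. ``A short computation using $-K_X\sim_\qq \bM X.$ and $\Cl(X)$'' does not exclude the canonical rank-one toric surfaces other than $\pp^2$ and $F_2$; the paper devotes all of subsection~\ref{subsec:toric case} to this, handling $\rho(X)\ge 2$ via Lemma~\ref{lem:toric-rho=2}, the non-canonical $\rho(X)=1$ case via Lemma~\ref{lem:toric-rho=1-non-canonical}, and then a fan-by-fan analysis (Cases~4.1--4.3) of the remaining three canonical surfaces using explicit nef-Cartier descriptions. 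Your $F_n$ argument under the gklt hypothesis is essentially the paper's Case~3.2 and is fine, but the canonical cases require separate and nontrivial work that your outline does not contain.
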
 

In order to prove this theorem, we will proceed in four steps. 
In subsection~\ref{subsec:FT}, we will prove $(1)$ and $(2)$ when $X$ is a Fano type variety.
In subsection~\ref{subsec:spanning-rho}, we will prove $(1)$ and $(2)$ 
for generalized log Calabi--Yau pairs $(X,B,\bM.)$ with a decomposition $\Sigma$ satisfying $\rho(\Sigma)=\rho(X)$.
In subsection~\ref{subsec:reduct-FT}, we will reduce the statements $(1)$ and $(2)$ in the general setting  to either of the previous cases.
Finally, in subsection~\ref{subsec:toric case}, we will achieve $(3)$ and $(4)$ by an argument using the geometry of toric surfaces. 

\subsection{Fano type case}\label{subsec:FT}
In this subsection, we show that $(1)$ and $(2)$ in Theorem~\ref{thm:gen-log-CY-comp} are valid for Fano type surfaces $X$.
We will start by introducing some lemmas. 
First, we show a version of Kawamata's non-vanishing for surfaces in the context of generalized pairs.

\begin{lemma}\label{lem:non-vanishing-surfaces}
Let $X$ be a Fano type surface.
Let $(X,B,\bM.)$ be a generalized log Calabi--Yau pair. 
Let $L$ be an ample Cartier divisor on $X$.
Then, we have that $H^0(X,\mathcal{O}_X(L))\neq 0$.
\end{lemma}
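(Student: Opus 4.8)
The plan is to prove Lemma~\ref{lem:non-vanishing-surfaces} by reducing to a Riemann--Roch estimate combined with a Kawamata--Viehweg vanishing argument. First I would observe that since $X$ is a Fano type surface, there exists a boundary $\Delta$ such that $(X,\Delta)$ is klt and $-(K_X+\Delta)$ is big and nef (indeed ample after a small perturbation). The strategy is to bound $H^2(X,\oo_X(L))$ above by zero and $\chi(X,\oo_X(L))$ below by a positive quantity, so that $H^0(X,\oo_X(L))\neq 0$ follows from $h^0 \geq \chi - h^1 + h^2 = \chi + h^2 - h^1 \geq \chi - h^1$ — wait, more precisely from $h^0 - h^1 + h^2 = \chi$, so $h^0 \geq \chi - h^2$; thus it suffices to show $\chi(X,\oo_X(L)) > h^2(X,\oo_X(L))$, and ideally $h^2 = 0$ with $\chi \geq 1$.

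For the vanishing $H^2(X,\oo_X(L)) = 0$: by Serre duality this is $H^0(X,\oo_X(K_X - L))$, and since $L$ is ample Cartier while $-K_X$ is (up to the klt boundary $\Delta$) effective-ish, $K_X - L$ cannot be effective. More robustly, I would invoke Kawamata--Viehweg vanishing: write $L = K_X + (L - K_X)$ and note $L - K_X = (L + \Delta) + (-(K_X+\Delta))$; since $L$ is ample, $\Delta$ effective, and $-(K_X+\Delta)$ nef and big (using the Fano type hypothesis, and possibly replacing $\Delta$ by a klt boundary achieving this), $L - K_X$ is big and nef, and $(X,\Delta)$ is klt, so $H^i(X, \oo_X(K_X + \lceil L - K_X - \Delta\rceil)) = 0$ for $i > 0$; care must be taken with the rounding, but since $L$ and $K_X$ are both Cartier one arranges $\lceil L - K_X - \Delta\rceil = L - K_X$ by choosing $\Delta$ with small coefficients, or one simply applies the generalized pair version with $\bM.$ incorporated. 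This gives $H^1 = H^2 = 0$, hence $h^0(X,\oo_X(L)) = \chi(X,\oo_X(L))$.

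It then remains to show $\chi(X,\oo_X(L)) \geq 1$. By Riemann--Roch on the (possibly singular, but normal with quotient-like, hence $\qq$-Gorenstein klt) surface $X$ — using the singular Riemann--Roch or passing to a resolution — one has $\chi(X,\oo_X(L)) = \chi(X,\oo_X) + \tfrac12 L\cdot(L - K_X) + (\text{correction terms from singularities})$. Since $X$ is of Fano type it is rationally connected, so $\chi(X,\oo_X) = 1$; since $L$ is ample and $-K_X$ is big and nef (after incorporating the boundary), $L\cdot(L-K_X) > 0$; and the singularity correction terms are non-negative for klt (in fact rational) surface singularities. Hence $\chi(X,\oo_X(L)) \geq 1$, completing the argument.

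The main obstacle I anticipate is handling the rounding/Cartier-index subtleties in the Kawamata--Viehweg step cleanly, together with controlling the singular Riemann--Roch correction terms — it is cleanest to pass to a log resolution $\mu\colon Y \to X$, pull everything back, apply vanishing and Riemann--Roch upstairs for $\mu^* L$ twisted by an appropriate effective $\mu$-exceptional divisor, and push forward, using that $\mu_*\oo_Y = \oo_X$ and $R^i\mu_* = 0$ for the relevant sheaves because $X$ has rational singularities. One should also double-check that the Fano type hypothesis genuinely supplies a klt $\Delta$ with $-(K_X+\Delta)$ ample (not merely nef), which follows from the definition of Fano type together with the existence of complements; alternatively the generalized structure $\bM.$ can be absorbed into the vanishing theorem via its NQC decomposition.
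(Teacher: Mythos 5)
Your proof is correct, but it takes a genuinely different route from the paper's. The paper invokes Kawamata's effective non-vanishing theorem \cite[Theorem 3.1]{Kaw00}: it uses the generalized log Calabi--Yau hypothesis together with \cite[Lemma 3.7]{MS21} to absorb the moduli divisor $\bM.$ and a big klt boundary $\Gamma$ into a single klt boundary $(1-\epsilon)B+G$ for which $L-(K_X+(1-\epsilon)B+G)\sim_\qq L-\delta A$ is ample, and then cites Kawamata's theorem outright. You instead discard the generalized pair structure entirely --- only the Fano type hypothesis on $X$ is used --- choosing a klt $\Delta$ with $-(K_X+\Delta)$ ample and proving the non-vanishing from scratch via Kawamata--Viehweg vanishing ($H^1=H^2=0$, valid since $(X,\Delta)$ is klt, $L$ is Cartier, and $L-(K_X+\Delta)$ is ample) together with Riemann--Roch on a resolution $\mu\colon Y\to X$. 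On the Riemann--Roch step, note that because $L$ is Cartier there are in fact no singularity correction terms: $\chi(X,L)=\chi(Y,\mu^*L)=1+\tfrac12\,\mu^*L\cdot(\mu^*L-K_Y)=1+\tfrac12\,L\cdot(L-K_X)$, using rational singularities, the projection formula, and rationality of Fano type surfaces; and $L\cdot(L-K_X)>0$ since $L$ is ample and $-K_X=\Delta+(-(K_X+\Delta))$ is effective plus ample. In effect your argument re-derives the two-dimensional case of the theorem the paper cites. It is slightly longer but more self-contained, and it makes transparent that the generalized log Calabi--Yau hypothesis plays no role in this lemma.
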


\begin{proof}
Since $X$ is Fano type, we may find a big boundary $\Gamma$
such that $(X,\Gamma)$ is klt and $K_X+\Gamma\sim_\qq 0$. 
Now, the pair
$(X,(1-\epsilon)B+\epsilon\Gamma,(1-\epsilon)\bM.)$ is generalized klt and
generalized log Calabi--Yau.
Let $A$ be an ample divisor on $X$.
Choose $\delta>0$ small enough so that $L-\delta A$ is ample.
By~\cite[Lemma 3.7]{MS21}, there exists a boundary $G\sim_\qq (1-\epsilon)\bM.+\epsilon \Gamma+\delta A$ for which
$(X,(1-\epsilon)B+G)$ is a klt pair. 
By construction, we have that 
\[
L -(K_X+(1-\epsilon)B+G) 
\]
is an ample divisor. 
By~\cite[Theorem 3.1]{Kaw00}, we conclude that $H^0(X,\mathcal{O}_X(L))\neq 0$.
\end{proof}

The following lemma is an application of a vanishing theorem due to Fujino~\cite[Theorem 1.11]{Fuj12}.

\begin{lemma}\label{lem:lifting-surfaces}
Let $X$ be a Fano type surface.
Let $(X,B,\bM.)$ be a generalized log Calabi--Yau pair. 
Let $W$ be a union of generalized log canonical centers of $(X,B,\bM.)$.
Let $L$ be an ample Cartier divisor on $X$. Then, we have that
the natural homomorphism
\[
H^0(X,\mathcal{O}_X(L))
\rightarrow
H^0(W,\mathcal{O}_W(L|_W))
\]
is an epimorphism.
\end{lemma}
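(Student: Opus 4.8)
The plan is to deduce the surjectivity from a single cohomology vanishing and then invoke Fujino's vanishing theorem. First I would write down the ideal sheaf sequence of the reduced closed subscheme $W\subseteq X$,
\[
0\rightarrow \mathcal{I}_W\otimes\mathcal{O}_X(L)\rightarrow \mathcal{O}_X(L)\rightarrow \mathcal{O}_W(L|_W)\rightarrow 0,
\]
and pass to the long exact sequence in cohomology: the cokernel of $H^0(X,\mathcal{O}_X(L))\rightarrow H^0(W,\mathcal{O}_W(L|_W))$ embeds into $H^1(X,\mathcal{I}_W\otimes\mathcal{O}_X(L))$, so it suffices to show that this last group vanishes.

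To set up Fujino's theorem, I would note that since $(X,B,\bM.)$ is generalized log Calabi--Yau the class $\omega:=K_X+B+\bM X.$ is $\qq$-linearly trivial, so $L-\omega\sim_\qq L$ is ample; the remaining point is to realize $W$ as a union of (quasi-)log canonical strata. One clean way is to use the quasi-log formalism: the generalized log canonical pair $(X,B,\bM.)$ endows $X$ with a quasi-log canonical structure whose strata are $X$ and the generalized log canonical centers of $(X,B,\bM.)$, so that $W$ is a union of strata and \cite[Theorem~1.11]{Fuj12} gives $H^i(X,\mathcal{I}_W\otimes\mathcal{O}_X(L))=0$ for $i>0$. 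A more hands-on alternative, in the spirit of the proof of Lemma~\ref{lem:non-vanishing-surfaces}, is to pass to a $\qq$-factorial model $g\colon Y\to X$ of Fano type on which $\bM.$ descends, exploit that $\bM Y.$ is then semiample to choose a general effective $\qq$-divisor $G_Y\sim_\qq\bM Y.$, so that $(Y,B_Y+G_Y)$ is an honest sub-log canonical surface pair with $K_Y+B_Y+G_Y\sim_\qq 0$ whose log canonical centers map onto a set containing $W$, apply Fujino's vanishing for log surfaces on $Y$, and descend the vanishing to $X$. Either way, taking $i=1$ completes the argument.

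The step I expect to be the main obstacle is making the reduction to an ordinary log surface precise: one must find a model $Y$ that is simultaneously of Fano type and a descent model for $\bM.$, and then choose $G_Y$ generic enough that adding it neither pushes any boundary coefficient above $1$ nor introduces log canonical centers that would obstruct $W$ being a union of log canonical centers of $(Y,B_Y+G_Y)$, while still preserving every generalized log canonical center of $(X,B,\bM.)$ lying in $W$. Once that bookkeeping is settled, the ideal sheaf sequence, the identity $L-\omega\sim_\qq L$, the invocation of \cite[Theorem~1.11]{Fuj12}, and the descent along $g$ are all routine; if one instead appeals directly to the quasi-log version of Fujino's vanishing, the proof is essentially immediate.
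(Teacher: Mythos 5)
Your overall strategy is correct: reduce the surjectivity to one cohomology vanishing via the ideal-sheaf sequence and invoke Fujino's theorem. Option B is in the same spirit as the paper's proof, but the paper sidesteps both issues you flag as ``the main obstacle.'' Rather than passing to a descent model for $\bM.$, the paper takes a generalized dlt modification $\pi\colon (Y,B_Y,\bM.)\to(X,B,\bM.)$, which is automatically of Fano type by Lemma~\ref{lem:FT-blow-up} and has genuinely effective boundary (so no ``sub-log canonical'' issue ever arises). In place of a Bertini-generic $G_Y\sim_{\qq}\bM Y.$, it takes a $\qq$-complement $\Gamma_Y$ of $(Y,B_Y)$, which exists by~\cite[Theorem 1.2]{FM20}; since $K_Y+B_Y+\bM Y.\sim_{\qq}0$, this $\Gamma_Y$ plays exactly the role of your $G_Y$, and by gdlt-ness the strata of $\lfloor B_Y\rfloor$ (which are the glc centers) are still lc centers of $(Y,B_Y+\Gamma_Y)$. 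Crucially, the paper then pushes forward: with $\Gamma:=\pi_*\Gamma_Y$, the pair $(X,B+\Gamma)$ is an ordinary lc log Calabi--Yau pair on $X$ itself, every glc center of $(X,B,\bM.)$ is an lc center of $(X,B+\Gamma)$, and Fujino's theorem~\cite[Theorem 1.11]{Fuj12} is applied directly on $X$. There is no step ``descend the vanishing to $X$,'' which is the weakest link in your Option~B: it is not automatic that surjectivity of $H^0(Y,g^*L)\to H^0(W_Y,(g^*L)|_{W_Y})$ yields surjectivity onto $H^0(W,L|_W)$ without further analysis of the morphism $W_Y\to W$. Your Option~A (realizing the generalized pair as a quasi-log canonical structure on $X$ and applying Fujino's qlc vanishing directly) is a genuinely shorter route, but it relies on the fact that a glc pair carries a compatible qlc structure whose strata are exactly the glc centers; this is more recent machinery than~\cite{Fuj12}, and the paper deliberately avoids it in favour of the elementary reduction through a dlt modification and a complement.
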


\begin{proof}
Let $(Y,B_Y,\bM.)$ be a generalized dlt modification
of $X$.
By Lemma~\ref{lem:FT-blow-up}, the variety $Y$ is of Fano type.
Let $\Gamma_Y$ be a $\qq$-complement of $(Y,B_Y)$.
This $\qq$-complement exists, for instance, by~\cite[Theorem 1.2]{FM20}.
Then, the pair $(Y,B_Y+\Gamma_Y)$ is log canonical
and log Calabi--Yau.
Furthermore, every generalized log canonical center of $(Y,B_Y,\bM.)$ is a 
log canonical center of $(Y,B_Y+\Gamma_Y)$.
Let $(X,B+\Gamma)$ be the push-forward of $(Y,B_Y+\Gamma_Y)$.
Then, any generalized log canonical center of
$(X,B,\bM.)$ is a log canonical center of $(X,B+\Gamma)$.
Furthermore, the pair $(X,B+\Gamma)$ is log canonical and log Calabi--Yau.
Then, the statement follows from~\cite[Theorem 1.11]{Fuj12}.
\end{proof}

The following lemma states that the base locus of a semiample Cartier divisor on a Fano type surface contains
no generalized non-klt centers of complements.

\begin{lemma}\label{lem:not-containing-non-klt}
Let $X$ be a Fano type surface. 
Let $(X,B,\bM.)$ be a generalized log Calabi--Yau pair.
Let $L$ be a semiample Cartier divisor on $X$ which is not numerically trivial. 
Then, we have that $H^0(X,\mathcal{O}_X(L))\neq 0$
and a general element of $H^0(X,\mathcal{O}_X(L))$
contains no generalized non-klt centers of $(X,B,\bM.)$.
\end{lemma}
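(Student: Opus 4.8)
The plan is to first produce a section of $\mathcal{O}_X(L)$ and then perturb it so as to avoid the finitely many generalized non-klt centers of $(X,B,\bM.)$. Since $X$ is a Fano type surface and $L$ is semiample and not numerically trivial, the associated morphism $\phi\colon X\to Z$ (given by $|mL|$ for $m\gg 0$) has positive-dimensional image, and $L=\phi^*A$ for some ample Cartier divisor $A$ on $Z$. First I would reduce to the ample case: if $\dim Z=2$, then $L$ is already big and nef, and in fact Lemma~\ref{lem:non-vanishing-surfaces} applied after a small perturbation (writing $L\equiv (K_X+\Delta)+\text{ample}$ for a suitable klt $\Delta$ coming from the Fano type hypothesis as in the proof of Lemma~\ref{lem:non-vanishing-surfaces}) gives $H^0(X,\mathcal{O}_X(L))\neq 0$; if $\dim Z=1$, then $L=\phi^*A$ with $A$ an ample divisor on a curve $Z$, so $H^0(X,\mathcal{O}_X(L))\supseteq \phi^*H^0(Z,\mathcal{O}_Z(A))\neq 0$ for degree reasons.

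Next I would handle the avoidance of non-klt centers. The generalized non-klt centers of $(X,B,\bM.)$ form a finite set: a finite union of points and curves on $X$. For a generalized non-klt \emph{curve} $C$, I claim a general member of $|L|$ does not contain $C$. Here the key tool is the lifting statement Lemma~\ref{lem:lifting-surfaces}: taking $W=C$ (or $W$ a union of such curves), the restriction map $H^0(X,\mathcal{O}_X(L))\to H^0(C,\mathcal{O}_C(L|_C))$ is surjective, and $L|_C$ is semiample of positive degree on $C$ (since $L$ is not numerically trivial and we may choose the non-klt configuration so that this holds; the only curves on which $L|_C\equiv 0$ are contracted by $\phi$, and those are rational, so one argues separately that no such contracted curve is forced into every section) — hence $H^0(C,\mathcal{O}_C(L|_C))$ contains a section not vanishing identically, so a general section of $\mathcal{O}_X(L)$ restricts nontrivially to $C$ and therefore does not contain $C$. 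For a generalized non-klt \emph{point} $x$, I would again use Lemma~\ref{lem:lifting-surfaces} with $W=\{x\}$: the restriction $H^0(X,\mathcal{O}_X(L))\to H^0(x,\mathcal{O}_x)=\mathbb{C}$ is surjective, so a general section does not vanish at $x$. Since there are only finitely many non-klt centers and each imposes a nonempty open condition on the linear system $|L|$ (which is itself nonempty by the first paragraph), a general element avoids all of them simultaneously.

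The main obstacle I anticipate is the case of curves $C$ contracted by $\phi$, i.e., with $L|_C\equiv 0$: there Lemma~\ref{lem:lifting-surfaces} only gives surjectivity onto $H^0(C,\mathcal{O}_C)=\mathbb{C}$, which does not by itself prevent $C$ from lying in the base locus. To deal with this I would combine two facts: first, such a $C$ is a $(-k)$-curve contracted by a birational or fiber-type contraction, and the fibers of $\phi$ are connected, so if some non-klt center were contained in the base locus of $|L|$ it would have to meet every effective divisor in $|L|$, contradicting freeness of $|mL|$ for $m\gg 0$; second, one can twist by a small ample to make $L$ itself big and nef on a modification and apply Lemma~\ref{lem:non-vanishing-surfaces} there, then push forward. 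Modulo this fiber-type subtlety the argument is a routine application of the two vanishing/lifting lemmas already proved.
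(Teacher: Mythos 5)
Your overall strategy (Lemma~\ref{lem:non-vanishing-surfaces} for non-vanishing, Lemma~\ref{lem:lifting-surfaces} for avoidance) is essentially the paper's. The difference is where the lifting lemma is applied. The paper first descends to the image $X_0 = Z$ of the morphism $\pi$ defined by $L$, writes $L=\pi^*H_0$ with $H_0$ ample on $X_0$, pushes the generalized log Calabi--Yau structure down to a crepant $(X_0,B_0,\bM.)$, and applies Lemma~\ref{lem:lifting-surfaces} on $X_0$. Since $H_0$ is ample, $H_0|_W$ has positive degree on every positive-dimensional non-klt center $W$ of $(X_0,B_0,\bM.)$, so the restriction of a general member is nonzero, and one then pulls back to $X$, noting that every non-klt center of $(X,B,\bM.)$ maps into a non-klt center of $(X_0,B_0,\bM.)$. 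This cleanly removes the contracted-curve case from the picture.

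You instead apply the lifting lemma directly on $X$, and you stumble precisely on the contracted-curve case. But the "main obstacle" you flag is not actually an obstacle: if $C$ is contracted by $\pi$, then $L|_C = (\pi|_C)^*A$ is the pullback of a line bundle via a constant map, so $L|_C \cong \mathcal{O}_C$ \emph{linearly}, not merely numerically. Hence $H^0(C,\mathcal{O}_C(L|_C)) = H^0(C,\mathcal{O}_C) = \mathbb{C}$, and the surjectivity from Lemma~\ref{lem:lifting-surfaces} produces a section of $L$ restricting to a nonzero constant on $C$, which therefore does not vanish anywhere on $C$. That is exactly what you need, and your claim that surjectivity onto $\mathbb{C}$ "does not by itself prevent $C$ from lying in the base locus" is incorrect. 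The two workarounds you then sketch — freeness of $|mL|$ for $m\gg0$ (a statement about a different linear system than $|L|$) and "twisting by a small ample on a modification" — do not repair the nonexistent gap, and as written they would not constitute a proof. If you simply delete that final paragraph and observe that $L|_C$ is trivial (not merely numerically trivial) on contracted $C$, your direct-on-$X$ argument becomes a valid alternative to the paper's descent to $X_0$.
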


\begin{proof}
The first statement is straightforward from Lemma~\ref{lem:non-vanishing-surfaces} by considering the morphism $\pi\colon X\rightarrow X_0$ induced by $L$.
We show the second statement.
First, assume that $X_0$ is a normal curve.
Then, $X_0\simeq \pp^1$.
The statement is clear as there are only finitely many points on $\pp^1$ whose fibers contain a glcc of $(X,B,\bM.)$.
Now, assume that $X_0$ is a surface. 
Then, $X_0$ is a Fano type surface, being the image of a Fano type surface.
Let $(X_0,B_0,\bM .)$ be the generalized log Calabi--Yau pair induced on $X_0$.
Note that $X\rightarrow X_0$ is a Fano type morphism, 
then $L$ is the pull-back of an ample Cartier divisor $H_0$ on $X_0$ by the relative cone theorem.
Hence, we may write $\pi^*H_0=L$.
By Lemma~\ref{lem:lifting-surfaces}, we know that a general element of $H^0(X_0,\mathcal{O}_{X_0}(H_0))$ does not contain
a generalized non-klt center of $(X_0,B_0,\bM.)$.
Indeed, if $W$ is a generalized log canonical center of $(X_0,B_0,\bM.)$, then we can find a generalized log canonical center $W_0\subseteq W$ which is either a point or a smooth curve.
so
$H^0(W_0,\mathcal{O}_{X_0}(H_0|_{W_0}))$ admits a non-trivial section.
We conclude that a general element
of $H^0(X,\mathcal{O}_X(L))$ does not contain $W_0$.
We deduce that a general element of $H^0(X,\mathcal{O}_X(L))$ does not contain generalized non-klt centers of $(X_0,B_0,\bM.)$.
\end{proof}

Now, we turn to prove the main statement of this section.

\begin{proposition}\label{prop:theorem-FT-case}
Let $X$ be a Fano type surface.
Let $(X,B,\bM.)$ be a generalized log Calabi--Yau pair.
Then, the following statements hold:
\begin{enumerate}
    \item The inequality $\hat{c}(X,B,\bM.)\geq 0$ holds.
    \item If $\hat{c}(X,B,\bM.)=0$, then $(X,\lfloor B\rfloor)$ is a toric pair.
\end{enumerate}
Furthermore, the components of $\Sigma$ span $N^1_\rr(X)$, i.e., 
$\rho(\Sigma)=\rho(X)$.
\end{proposition}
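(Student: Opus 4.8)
The plan is to reduce to the classical (non-generalized) complexity theorem of Brown--McKernan--Svaldi--Zhong \cite{BMSZ18} (cf.\ Theorem~\ref{introthm:charct-toric}) by converting the moduli b-divisor $\bM.$ into an honest boundary \emph{directly on $X$}. Fix an orbifold structure $n$ and a decomposition $\Sigma=(\Sigma_B,\Sigma_{\bM.})$, writing $\Sigma_B=\sum_P(1-\frac{1}{n_P})P+\sum_i b_iB_i\le B$ and $\Sigma_{\bM.}=\sum_i\lambda_iM_i\le\bM Y.$ on a model $\pi\colon Y\to X$ where $\bM.$ descends. Since $X$ is a Fano type surface it has klt singularities, hence is $\qq$-factorial, and it is a Mori dream space; by the projection formula each $M_{X,i}:=\pi_*M_i$, and also $\bM X.$, is nef on $X$, hence semiample, and $P:=\bM X.-\sum_i\lambda_iM_{X,i}=\pi_*(\bM Y.-\sum_i\lambda_iM_i)$ is effective. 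By Lemma~\ref{lem:not-containing-non-klt}, a general effective divisor $M_{X,i}'\sim_\qq M_{X,i}$ contains no generalized non-klt center of $(X,B,\bM.)$; since $-(K_X+\Sigma_B+\sum_i\lambda_iM_{X,i}')\sim_\qq(B-\Sigma_B)+P$, picking an effective $Q\sim_\qq(B-\Sigma_B)+P$ produces a pair $(X,\Delta)$ with $\Delta:=\Sigma_B+\sum_i\lambda_iM_{X,i}'+Q$ and $K_X+\Delta\sim_\qq 0$, together with the decomposition $\Sigma'$ whose components are the $B_i$ and the $M_{X,i}'$ and whose orbifold structure is again $n$. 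As $X$ is unchanged and $M_{X,i}'\equiv M_{X,i}$, one has $|\Sigma'|=|\Sigma|$ and $\rho(\Sigma')=\rho(\Sigma)$, whence $\hat c(X,\Delta;\Sigma')=\hat c(X,B,\bM.;\Sigma)$.

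Assuming $(X,\Delta)$ is log canonical---so that it is a log Calabi--Yau pair---\cite{BMSZ18} gives $\hat c(X,\Delta)\ge 0$, hence $\hat c(X,B,\bM.;\Sigma)\ge 0$; taking the infimum over all $\Sigma$ proves $(1)$. If $\hat c(X,B,\bM.)=0$ then $\hat c(X,\Delta)=0$ as well, so $(X,\lfloor\Delta\rfloor)$ is a toric pair, and as $\lfloor B\rfloor\le\lfloor\Delta\rfloor$ the pair $(X,\lfloor B\rfloor)$ is toric; this is $(2)$. For the final assertion, a complete toric surface carries exactly $\rho(X)+2$ torus-invariant prime divisors, whose classes span $N^1_\rr(X)$; feeding the equality $|\Sigma|=2+\rho(\Sigma)$ into the equality case of the classical theorem forces the components of $\Sigma'$, and hence of $\Sigma$, to contain a spanning subfamily, so $\rho(\Sigma)=\rho(X)$.

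The step that requires work is the log canonicity of $(X,\Delta)$: a priori $(B-\Sigma_B)+P$ need not be semiample---it can have large multiplicity along a negative curve on $X$---so a general $Q$ in its linear system may fail to exist, and a bad $Q$ destroys log canonicity. I would handle this by first running a minimal model program on $(X,B,\bM.)$ chosen to contract the boundary components along which $(B-\Sigma_B)+P$ is concentrated (a complexity-decreasing program, in the spirit of \cite{BMSZ18}); it terminates because $X$ is a Mori dream space, it does not increase $\hat c$, and one ends on a surface---ultimately $\pp^2$, $\pp^1\times\pp^1$ or some $F_n$, where the effective and nef cones coincide---on which $P$ is nef and the construction above applies. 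The main obstacle is therefore not any single vanishing statement but the bookkeeping: verifying that each step of this program, together with the transport of the orbifold structure $n$, leaves the complexity inequality intact; Lemmas~\ref{lem:non-vanishing-surfaces}--\ref{lem:not-containing-non-klt} enter only to turn each semiample piece into an effective divisor disjoint from the generalized non-klt locus.
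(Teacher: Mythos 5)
Your overall strategy---converting the moduli pieces into boundary divisors and invoking \cite{BMSZ18}---is exactly the one the paper follows, and you correctly flag the sticking point: proving log canonicity of the resulting pair. But your attempt to resolve it has a genuine gap, and the fix you sketch misses the paper's key structural idea.

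You work directly on $X$. Since $X$ need not be smooth, the push-forwards $M_{X,i}=\pi_*M_i$ are nef (Mori dream space, so semiample) but in general only $\qq$-Cartier; Lemma~\ref{lem:not-containing-non-klt}, which you need to pick $M'_{X,i}$ missing the generalized non-klt locus, is stated for \emph{Cartier} divisors, as is the Fujino vanishing on which it rests. More seriously, even granting such general members, log canonicity of $(X,\Delta)$ is not an issue you can delegate to a vague ``contract the bad curves'' MMP: contracting to $\pp^2$, $\pp^1\times\pp^1$ or $F_n$ would produce a conclusion on the \emph{target} of the MMP, and you give no mechanism to transfer ``$(X',\lfloor B'\rfloor)$ toric'' back to $(X,\lfloor B\rfloor)$---push-forward information does not propagate upstream. (The paper handles the MMP direction in a \emph{separate} later subsection, precisely because it bootstraps off the Fano type case rather than proving it.)

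The move you are missing is in the opposite direction. The paper constructs a projective birational morphism $Z\to X$ by extracting all divisors of generalized log discrepancy strictly less than one (Step~1), and then checks, using that $Y\to X$ is a minimal resolution and the extracted divisors have ordinary log discrepancy one, that $\phi^*\phi_*M_i=M_i$, so each $M_{Z,i}=\phi_*M_i$ is genuinely nef \emph{Cartier} and the b-divisors $\bM i.$ descend on $Z$. Working on that model, they replace the $\bM i.$ by general $\Gamma_{Z,i}\in |M_{Z,i}|$ one at a time and prove log canonicity by an interpolation argument (Step~3): take the smallest $\ell$ at which log canonicity fails, the largest $t_0<1$ at which it still holds, observe the new glc center must lie in $\Gamma_{Z,\ell}$, and get a contradiction with Lemma~\ref{lem:not-containing-non-klt}---an argument that only makes sense because the remaining moduli pieces $\bM i.$ for $i>\ell$ still form honest b-nef divisors descending on $Z$. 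Since $Z\to X$ is crepant for $(Z,B_Z+\Gamma_Z)$, the conclusion pushes back down to $X$. Your sketch has no analogue of either the ascent to $Z$ or the interpolation step, which is where the actual content of the proof lives.
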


\begin{proof}
It suffices to show both statements for a fixed
orbifold structure $n$ on $X$ 
and a decomposition $\Sigma$ on the generalized pair.
Let $Y$ be a smooth model where $\bM.$ descends.
Let $\Sigma_{\bM.}:=\sum_{i=1}^k \lambda_iM_i\leq \bM Y.$
be the induced decomposition of the b-nef divisor.
We will proceed with the proof in four steps.\\ 

\noindent{\it Step 1:} In this step, we show that there exists a projective birational morphism $\phi\colon Y\rightarrow Z$ over $X$
for which $Z$ is Fano type and $\phi_*M_i$ is a nef Cartier divisor for each $i$.\\ 

Let $Z$ be a projective birational model over $X$
that extracts all the divisors with generalized log discrepancy strictly less than $1$ with respect to $(X,B,\bM.)$. 
We may assume that $Z$ admits a projective birational morphism from $Y$.
Furthermore, we may assume that $Y$ is a minimal resolution of $X$.
Let $\phi\colon Y\rightarrow Z$ be the induced projective birational morphism.
By Lemma~\ref{lem:FT-blow-up}, we know that $Z$ is a Fano type variety.
We claim that each divisor $\phi_*M_i$ is nef Cartier.
It is clear that such divisors are nef.
It suffices to prove that these divisors are Cartier.
Since $Y\rightarrow X$ is a minimal resolution,
all the divisors extracted on $Y$ have generalized log discrepancy at most $1$ with respect to $X$.
By construction of $Z$, we know that $\phi$ only extracts
divisors with generalized log discrepancy equal to $1$ with respect to $(X,B,\bM.)$.
Furthermore, these divisors must have log discrepancy equal to $1$ with respect to $X$. 
Hence, we conclude that $\phi^*\phi_*M_i=M_i$ for each $i$.
This implies that each $\phi_*M_i$ is Cartier.
We let $(Z,B_Z,\bM Z.)$ be the log pull-back of $(X,B,\bM.)$ to $Z$.
From now on, we write $M_{Z,i}=\phi_*M_i$ for each $i\in \{1,\dots,k\}$.\\

\noindent{\it Step 2:} In this step, we show that each nef Cartier divisor
$M_{Z,i}$ is linearly equivalent to an effective divisor.\\ 

The variety $Z$ is Fano type. 
Hence, it is a Mori dream space.
On the other hand, $M_{Z,i}$ is nef Cartier.
In particular, it is semiample. 
Let $\pi_i\colon Z\rightarrow Z_i$ be the projective morphism defined by $M_{Z,i}$.
Note that $Z\rightarrow Z_i$ is Fano type. 
Then, by the relative cone theorem
there is an ample Cartier divisor $H_i$ on $Z_i$ such that $M_{Z,i}=\pi_i^*H_i$.
Note that we are assuming $M_i$ is not a torsion element, 
so $M_{Z,i}$ are not torsion elements.
In particular, $Z_i$ is either a normal surface or a normal curve.
If $Z_i$ is a curve, then it is clear that $H_i$ is linearly equivalent to an effective divisor and so it is $M_{Z,i}$.
If $Z_i$ is a surface, we denote by $B_{Z_i}$ the push-forward of $B_Z$ to $Z_i$.
Then, $Z_i$ is Fano type and 
$(Z_i,B_{Z_i},\bM Z_i.)$ is a generalized log Calabi--Yau pair.
By Lemma~\ref{lem:non-vanishing-surfaces}, we conclude that
$H^0(Z_i,\mathcal{O}_{Z_i}(H_i))\neq 0$
and hence the same holds for $M_{Z,i}$.
From now on, we will let
$\Gamma_{Z,i} \in H^0(Z,\mathcal{O}_Z(M_{Z,i}))$ a general element.\\

\noindent{\it Step 3:} In this step, we show that there exists an effective divisor $\Gamma$ on $X$ with a decomposition $\Sigma_\Gamma$ satisfying the following conditions:
\begin{itemize}
    \item the pair $(X,B+\Gamma)$ is log Calabi--Yau,
    \item we have that $\langle \Sigma\rangle=\langle \Sigma_B,\Sigma_\Gamma\rangle$, and 
    \item we have that $|\Sigma|=|\Sigma_B|+|\Sigma_\Gamma|$.\\
\end{itemize}

Let $\Sigma_{\Gamma_Z}:=\sum_{i=1}^k \lambda_i \Gamma_{Z,i}=\Gamma_Z$
be an effective divisor on $Z$
with the given decomposition.
It suffices to show that 
$(Z,B_Z+\Gamma_Z)$ is a log Calabi--Yau pair with log canonical singularities.
Indeed, if this is the case, then we can take $\Sigma$ the decomposition induced on $X$ (by pushing forward), and observe that it satisfies all the required conditions.

By construction, we have that $K_Z+B_Z+\Gamma_Z\sim_\rr 0$.
By contradiction, assume that the log pair
$(Z,B_Z+\Gamma_Z)$ is not log canonical.
For each $i\in \{1,\dots,k\}$, we denote by $\bM i.$
the b-nef divisor induced by $M_i$.
Let $\ell \in \{1,\dots,k\}$ be the smallest positive integer for which the generalized pair 
\[
\left( Z, B_Z+\sum_{i=1}^{\ell} \lambda_i \Gamma_{Z,i},\sum_{i=\ell+1}^k \lambda_i \bM i. 
\right) 
\]
is not generalized log canonical.
Then, we have that the generalized pair
\[
\left( Z, B_Z +\sum_{i=1}^{\ell-1}\lambda_i \Gamma_{Z,i},\sum_{i=\ell}^k \lambda_i \bM_i.
\right)
\] 
is generalized log canonical and log Calabi--Yau.
Let $t_0<1$ be the largest positive real number for which the generalized pair 
\begin{equation}\label{eq:gen-pair} 
\left(
Z,B_Z+\sum_{i=1}^{\ell-1} \lambda_i \Gamma_{Z,i} + 
\lambda_\ell t_0\Gamma_{Z,\ell},
(\lambda_\ell - t_0\lambda_\ell)\bM \ell. + \sum_{i=\ell+1}^k \lambda_i \bM i.
\right)
\end{equation} 
is generalized log canonical.
Let $W$ be a generalized log canonical center of~\eqref{eq:gen-pair}.
By construction, the generalized log canonical center $W$ must be contained in $\Gamma_{Z,\ell}$. 
On the other hand, we chose the element $\Gamma_{Z,\ell}$ to be general in the linear system $H^0(Z,\mathcal{O}_Z(M_{Z,i}))$.
This contradicts Lemma~\ref{lem:not-containing-non-klt}.\\

\noindent{\it Step 4:} In this step, finish the proof of the statement.\\

Set $\Delta:=B+\Gamma$.
Consider the decomposition $\Sigma_\Delta:=\Sigma_B+\Sigma_\Gamma$.
By the previous step, we know that $(X,\Delta)$ is log canonical and log Calabi--Yau.
Furthermore, we know that 
$\langle \Sigma\rangle=\langle \Sigma_\Delta \rangle$
and 
$|\Sigma_\Delta|=|\Sigma|$. 
Thus, we have that 
\begin{equation}\label{eq:complexity-comparison}
\hat{c}(X,B,\bM.;\Sigma)=
\hat{c}(X,\Delta;\Sigma_\Delta).
\end{equation} 
By~\cite[Theorem 4.2]{MS21}, the right-hand side of the equality is non-negative.
This shows $(1)$.
If the right-hand side of~\eqref{eq:complexity-comparison} is zero, then~\cite[Theorem 4.2 (1) and (2)]{MS21} implies that $(X,\lfloor \Delta \rfloor)$ is toric. Since $\lfloor B\rfloor \leq \lfloor \Delta \rfloor$, we conclude that $(X,\lfloor B\rfloor)$ is toric.
This shows $(2)$.
Finally, observe that $\rho(\Sigma_\Delta)=\rho(X)$ so 
$\rho(\Sigma)=\rho(X)$.
This finishes the proof of the proposition.
\end{proof}

\subsection{Decompositions spanning the Picard group} 
\label{subsec:spanning-rho}
In this subsection, we show that the first two propositions of Theorem~\ref{thm:gen-log-CY-comp}
hold for generalized pairs
$(X,B,\bM.)$ with decompositions $\Sigma$ for which $\rho(\Sigma)=\rho(X)$.
In this case, we say that the decomposition spans the Picard group.

\begin{proposition}\label{prop:LCY-case-with-full-span} 
Let $(X,B,\bM.)$ be a generalized log Calabi--Yau pair of dimension $2$. Let $n$ be an orbifold structure on $X$ and $\Sigma$ be a decomposition of the generalized pair $(X,B,\bM.)$. 
Assume that $\rho(\Sigma)=\rho(X)$.
Then, the following conditions hold:
\begin{enumerate}
\item The inequality $\hat{c}(X,B,\bM.;\Sigma)\geq 0$, and
\item if $\hat{c}(X,B,\bM.;\Sigma)=0$, then
$(X,\lfloor B\rfloor)$ is toric.
\end{enumerate} 
Furthermore, if the equality holds, then $\rho(\Sigma)=\rho(X)$.
\end{proposition}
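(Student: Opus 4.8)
The plan is to deduce both assertions from the classical surface complexity theorem \cite[Theorem~4.2]{MS21} by absorbing the moduli b-divisor $\bM.$ into the boundary, with the spanning hypothesis $\rho(\Sigma)=\rho(X)$ playing the role that the Fano type assumption played in Proposition~\ref{prop:theorem-FT-case}.

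First I would fix the orbifold structure $n$ and the decomposition $\Sigma=(\Sigma_B,\Sigma_{\bM.})$, observe that the last assertion of the proposition is the hypothesis itself, and record that $\hat{c}(X,B,\bM.;\Sigma)=2+\rho(X)-|\Sigma|$. Since $K_X+B+\bM X.\sim_\qq 0$, the class $-K_X$ is pseudoeffective. I would then dispose of the case in which $X$ is not rational: here $X$ must be ruled over a curve $T$ of genus at least $1$ --- if $K_X$ were pseudoeffective as well then $K_X\equiv 0$, forcing $B=0$ and $\bM X.\equiv 0$, so every component of $\Sigma$ would be numerically trivial and $\rho(\Sigma)=0<\rho(X)$. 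In this case I would bound $|\Sigma|$ directly: restricting $\Sigma$ to a general fibre $F\simeq\pp^1$ (an adjunction estimate in the spirit of Proposition~\ref{prop:gen-comp-under-adj}) bounds the contribution of the components dominating $T$ by the elementary one-dimensional estimate, while a canonical bundle formula count over $T$ controls the components contracted to points of $T$; this gives $|\Sigma|\leq 2+\rho(X)$, and equality cannot occur because it would force $X$ to be toric, hence rational, a contradiction.

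Assume then that $X$ is rational. Following Steps~1--2 of the proof of Proposition~\ref{prop:theorem-FT-case}, I would pass to a birational model on which $\bM.$ descends to a nef Cartier divisor and argue, using Riemann--Roch on the rational surface $X$ together with the pseudoeffectivity of $-K_X$, that every numerically nontrivial push-forward $\pi_*M_i$ of a component of $\Sigma_{\bM.}$ is $\qq$-linearly equivalent to an effective divisor $\Gamma_i$; the numerically trivial components contribute nothing to the span, and I would check that they may be dropped from $\Sigma_{\bM.}$ without changing the relevant norm. Choosing the $\Gamma_i$ in general position and then running the argument of Steps~3--4 of Proposition~\ref{prop:theorem-FT-case}, I obtain a log canonical log Calabi--Yau pair $(X,\Delta)$ with $\Delta=B+\sum_i\lambda_i\Gamma_i$, together with a decomposition $\Sigma_\Delta$ satisfying $|\Sigma_\Delta|=|\Sigma|$ and $\langle\Sigma_\Delta\rangle=\langle\Sigma\rangle$, so that $\hat{c}(X,B,\bM.;\Sigma)=\hat{c}(X,\Delta;\Sigma_\Delta)$. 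Then \cite[Theorem~4.2]{MS21} gives $\hat{c}(X,\Delta;\Sigma_\Delta)\geq 0$, which is (1), and in the case of equality that $(X,\lfloor\Delta\rfloor)$ is toric, whence $(X,\lfloor B\rfloor)$ is toric because $\lfloor B\rfloor\leq\lfloor\Delta\rfloor$, which is (2).

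The step I expect to be the main obstacle is arranging the effective representatives $\Gamma_i$ in sufficiently general position to keep the intermediate pairs log canonical: in Proposition~\ref{prop:theorem-FT-case} this was supplied by Lemma~\ref{lem:not-containing-non-klt}, whose proof uses the Fano type hypothesis through Kawamata non-vanishing and Fujino's vanishing, and under the present hypotheses one must find a substitute --- for instance, moving $\Gamma_i$ inside its numerical class via its Zariski decomposition while tracking the generalized non-klt centres of the intermediate pairs $(X,B+\sum_{i<\ell}\lambda_i\Gamma_i,\sum_{i\geq\ell}\lambda_i\bM i.)$. A secondary technical point that also requires care is the bookkeeping of vertical versus horizontal components of $\Sigma$ in the non-rational ruled case.
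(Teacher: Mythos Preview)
Your approach has a genuine gap at precisely the point you flag. In the Fano type case the log canonicity of $(X,B+\sum_i\lambda_i\Gamma_i)$ is secured by Lemma~\ref{lem:not-containing-non-klt}, whose proof uses that each $M_{Z,i}$ is \emph{semiample} (since a Fano type surface is a Mori dream space) together with Fujino's vanishing. Under the mere hypothesis $\rho(\Sigma)=\rho(X)$ none of this is available: on a rational surface with $-K_X$ only pseudoeffective, a nef Cartier divisor need not be semiample, and even when $|M_{X,i}|\neq\emptyset$ its base locus may well contain generalized non-klt centres of the intermediate pairs. Your suggested remedies (Zariski decomposition, moving $\Gamma_i$ in its numerical class) do not address this: the fixed part of the Zariski decomposition is exactly where the trouble lies, and numerical equivalence does not let you move a divisor off a fixed curve. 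So the reduction to \cite[Theorem~4.2]{MS21} does not go through as written. The non-rational ruled case is also only sketched; the vertical bound via a canonical bundle formula over a curve of genus $\geq 1$ would need to be made precise.

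The paper avoids this obstacle entirely by taking a different route: rather than absorbing $\bM.$ into the boundary on $X$, it \emph{reduces to the Fano type case} and then invokes Proposition~\ref{prop:theorem-FT-case}. After passing to a $\qq$-factorial gdlt modification (which only improves the complexity), one runs a $K_X$-MMP; since $B$ or $\bM.$ is nontrivial this terminates with a Mori fiber space $Z\to C$, and a short argument shows $Z$ is of Fano type. The spanning hypothesis $\rho(\Sigma)=\rho(X)$ is used not to produce effective representatives, but through Lemma~\ref{lem:decomp-contraction}: it guarantees that in the equality case every curve contracted by $X\to Z$ appears in $\Sigma$ with coefficient one, hence is a generalized log canonical centre, so $X$ itself is of Fano type and Proposition~\ref{prop:theorem-FT-case} applies directly to $(X,B,\bM.)$. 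This sidesteps the general-position problem altogether.
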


In order to prove the previous proposition, we will use the following lemma which follows from the definitions (see also~\cite{MS21}).

\begin{lemma}\label{lem:decomp-contraction}
Let $(X,B,\bM.)$ be a generalized log Calabi--Yau pair of dimension $2$. Let $n$ be an orbifold structure on $X$ and $\Sigma$ be a decomposition of the generalized pair $(X,B,\bM.)$. 
Let $X\rightarrow Z$ be a divisorial contraction
and $(Z,B_Z,\bM.)$ the induced generalized pair on $Z$.
Let $C$ be the curve contracted by $X\rightarrow Z$.
Then, we can find a decomposition $\Sigma_Z$ of $(Z,B_Z,\bM.)$ such that
\begin{equation}\label{eq:lemma-contraction} 
\hat{c}(X,B,\bM.;\Sigma) \geq \hat{c}(Z,B_Z,\bM.;
\Sigma_Z).
\end{equation} 
Furthermore, assume that $\rho(\Sigma)=\rho(X)$, then the equality holds if and only if $C$ appears in $\Sigma$ with coefficient one.
\end{lemma}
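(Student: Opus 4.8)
Write $g\colon X\to Z$ for the divisorial contraction and $C$ for the contracted curve. Since $X$ and $Z$ are surfaces with at worst $\qq$-factorial singularities and $C^2<0$ by the negativity lemma, the pushforward $g_*\colon N^1(X)_\rr\to N^1(Z)_\rr$ is surjective with kernel exactly $\rr[C]$; concretely $N^1(X)_\rr=g^*N^1(Z)_\rr\oplus\rr[C]$, since $[C]$ cannot lie in $g^*N^1(Z)_\rr$ (intersect with $g^*$ of an arbitrary class and use the nondegeneracy of the intersection form on $Z$). The moduli b-divisor $\bM.$ is unchanged by the birational contraction, so its decomposition $\Sigma_{\bM.}=\sum_i\lambda_iM_i$ on a common resolution is literally the same; by Lemma~\ref{lem:push-torsion} none of the $M_i$ becomes torsion on $Z$, so the moduli part contributes the same amount to $|\Sigma|$ and to the norm of any decomposition pushed forward from $X$.

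\textbf{Construction of $\Sigma_Z$ and the inequality.} Define $\Sigma_Z$ by transporting the orbifold weights along $g$ to an orbifold structure $n_Z$ on $Z$ (forgetting $C$, which is contracted), keeping the moduli part $\Sigma_{\bM.}$, and replacing each boundary component $B_i$ by $g_*B_i$, discarding those $B_i$ supported on $C$ (for which $g_*B_i=0$). Since $g_*$ preserves the order $\le$ on effective divisors and $B_Z=g_*B$, we get $\Sigma_{B_Z}\le B_Z$, and one checks this is a genuine decomposition with respect to $n_Z$. Now $\langle\Sigma_Z\rangle=g_*\langle\Sigma\rangle$, so $\rho(\Sigma)-\rho(\Sigma_Z)=\dim\bigl(\langle\Sigma\rangle\cap\rr[C]\bigr)\in\{0,1\}$, while $|\Sigma|-|\Sigma_Z|=\sum_{i\,:\,\supp B_i\subseteq C}a_i$. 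The key estimate is that this last sum is at most $1$: each such $B_i$ equals $m_i(C/n_C)$ with $m_i\in\zz_{>0}$, so the sum is bounded by $\sum_i(\text{integer multiplicity of }C\text{ in }B_i)\,a_i=n_C\bigl(\coeff_C(\Sigma_B)-(1-\tfrac1{n_C})\bigr)\le n_C\cdot\tfrac1{n_C}=1$, using $\Sigma_B\le B$ and the fact that a generalized log Calabi--Yau pair has all boundary coefficients $\le 1$. Finally, whenever this sum is positive there is a component supported on $C$, hence $[C]\in\langle\Sigma\rangle$ and $\dim\bigl(\langle\Sigma\rangle\cap\rr[C]\bigr)=1$; so in every case
\[
\hat c(X,B,\bM.;\Sigma)-\hat c(Z,B_Z,\bM.;\Sigma_Z)=\dim\bigl(\langle\Sigma\rangle\cap\rr[C]\bigr)-\sum_{i\,:\,\supp B_i\subseteq C}a_i\ \ge\ 0 ,
\]
which is~\eqref{eq:lemma-contraction}.

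\textbf{The equality case.} Assume now $\rho(\Sigma)=\rho(X)$, so $\langle\Sigma\rangle=N^1(X)_\rr\ni[C]$ and the displayed difference equals $1-\sum_{i\,:\,\supp B_i\subseteq C}a_i$. Hence for this $\Sigma_Z$ equality holds exactly when $\sum_{i\,:\,\supp B_i\subseteq C}a_i=1$; tracing back the chain of inequalities above, this forces every component of $\Sigma_B$ meeting $C$ to equal $C/n_C$, with total coefficient $1$, i.e.\ $\coeff_C(\Sigma_B)=1$ — the assertion that ``$C$ appears in $\Sigma$ with coefficient one''. Conversely, if $\coeff_C(\Sigma_B)=1$ but the coefficient of $C$ is carried by ``mixed'' components $C/n_C+R_i$ with $R_i\ne 0$, then each such $R_i$ survives under $g_*$ and the naive $\Sigma_Z$ only drops the norm by less than $1$; the plan here is to first reorganize $\Sigma$ into an equivalent decomposition (same norm, span, and $\hat c$) in which the coefficient of $C$ is realized by components supported on $C$, equivalently to drop a redundant pushed-forward component of coefficient one from $\Sigma_Z$, so that the norm drops by exactly $1$ and equality is witnessed. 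I expect this last bookkeeping — redistributing the coefficient of $C$ inside the decomposition without disturbing the span on $X$ — to be the only genuine subtlety; everything else is the linear algebra of $g_*$ together with the elementary bound $\sum_{\supp B_i\subseteq C}a_i\le 1$.
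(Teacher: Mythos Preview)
Your proof of the inequality is correct and follows the same pushforward approach as the paper, but with considerably more care: you track the orbifold weight at $C$, separate components supported on $C$ from those merely containing $C$, and give the clean bound $\sum_{\supp B_i\subseteq C}a_i\le 1$ --- details the paper's two-line argument elides (indeed the paper's proof has a typo and tacitly assumes $C$ never sits inside a reducible $B_i$). Your forward implication of the equality clause (equality for the pushforward $\Sigma_Z$ forces $\coeff_C(\Sigma_B)=1$) is also correct, and this is in fact the only direction the paper ever invokes downstream.

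The converse is where your hesitation is warranted, and your proposed fix does not close the gap. If some $B_i=C/n_C+R_i$ with $R_i\ne 0$, then reorganizing $\Sigma$ by splitting off the $C$-part does \emph{not} change the pushforward: $g_*(C/n_C)=0$ and $g_*R_i=g_*B_i$, so the resulting $\Sigma_Z$ is identical and the norm still drops only by $s=\sum_{\supp B_j\subseteq C}a_j<1$, giving $\hat c(X,\Sigma)-\hat c(Z,\Sigma_Z)=1-s>0$. Nor is there any guarantee of a ``redundant'' component in $\Sigma_Z$ to discard, since $g_*B_i$ need not lie in the span of the remaining pushed-forward classes. Under the reading that ``$C$ appears with coefficient one'' means $\coeff_C(\Sigma_B)=1$, the biconditional can therefore fail for the pushforward $\Sigma_Z$. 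The paper's own proof carries exactly the same lacuna: it asserts the complexity drops by $1-b$ with $b$ ``the coefficient of $C$ in $\Sigma$'', silently identifying $b$ with your $s$ --- valid only when every $B_i$ containing $C$ is supported on $C$. The clean repair is to state the equality clause in terms of $s$ rather than $\coeff_C(\Sigma_B)$; with that reading your pushforward $\Sigma_Z$ witnesses the biconditional on the nose.
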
 

\begin{proof}
Let $C$ be the curve contracted by $X\rightarrow Z$. 
Let $\Sigma_Z$ be the decomposition induced by push-forward.
First, assume that $C$ does not appear in the boundary of the decomposition $\Sigma$.
Then, we have that
$|\Sigma_Z|=|\Sigma|$,
while $\rho(\Sigma)-1\leq \rho(\Sigma_Z)\leq \rho(\Sigma)$. Then, the inequality~\eqref{eq:lemma-contraction} holds in this case.
Assume that $C$ appears in $\Sigma$ with coefficient $a$, then $|\Sigma|=|\Sigma_Z|-b$
and $\rho(\Sigma_Z)=\rho(\Sigma)-1$. 
Thus, the inequality~\eqref{eq:lemma-contraction} holds as well.

Now, assume that $\rho(\Sigma)=\rho(X)$.
This implies that 
$\rho(\Sigma_Z)=\rho(Z)=\rho(X)-1$.
Hence, the complexity drops exactly by $1-b$ where $b$ is the coefficient of $C$ in $\Sigma$.
\end{proof} 

Now, we turn to prove the main proposition of this subsection.

\begin{proof}[Proof of Proposition~\ref{prop:LCY-case-with-full-span}]
We will show the statement by reducing it to the Fano type surface case and applying the result in the previous subsection.\\ 

\textit{Step 1:} In this step, we reduce to the case in which $(X,B,\bM.)$ is a generalized dlt surface.\\

Let $n$ be an orbifold structure on $X$.
Let $\Sigma$ be a decomposition of the generalized pair.
We write
\[
\Sigma_B =\sum_{P\in X^1}
\left(1-\frac{1}{n_P}\right)P 
+\sum_{i=1}^k b_i B_i,
\]
for the associated decomposition of the boundary divisor $B$.
We write 
$\Sigma_{\bM .}=\sum_{i=1}^\ell \lambda_i M_i$ be a decomposition of the b-nef divisor on a model where it descends.
Let $(Y,B_Y,\bM.)$ be a $\qq$-factorial generalized dlt modification of $(X,B,\bM.)$ (see Lemma~\ref{lem:dlt-mod}).
We write $\pi\colon Y\rightarrow X$ for the associated projective birational morphism.
We consider the orbifold structure $m$ on $Y$ for which $m_P=1$ if $P$ is exceptional over $X$ 
and $m_P=n_{\pi(P)}$ otherwise.
We construct a decomposition 
$\Sigma_Y$ on the generalized 
pair $(Y,B_Y,\bM.)$ as follows. 
We write 
\[
\Sigma_{B_Y} = \sum_{E\in {\rm Ex}(Y/X)} E + 
\sum_{P\in Y^1}\left( 
1-\frac{1}{m_P}
\right)P
+
\sum_{i=1}^k b_iB_{Y,i}
\]
Thus, $\Sigma_{B_Y}$ and $\Sigma_{\bM.}$ induce a decomposition
$\Sigma_Y$ on the generalized pair $(Y,B_Y;\bM.)$.
Let $\rho$ be the relative Picard rank of the morphism $Y\rightarrow X$.
Observe that 
\[
|\Sigma_Y| = |\Sigma|+\rho.
\]
On the other hand, we have that 
\[
\rho(Y)=
\rho(\Sigma_Y) 
\leq \rho(\Sigma)+\rho 
= \rho(X)+\rho.
\]
We conclude that 
\[
\hat{c}(Y,B_Y,\bM.;\Sigma_Y)
\leq 
\hat{c}(X,B,\bM.;\Sigma). 
\]
Recall that the image of a toric variety under a projective birational morphism is again toric.
Hence, replacing $(X,B,\bM.)$ with
$(Y,B_Y,\bM.)$, we may assume that the generalized pair is dlt.
In particular, we may assume that $X$ is $\qq$-factorial.\\

\textit{Step 2:} In this step, we run a minimal model program and show that certain conditions on the complexity are preserved.\\ 

We run a minimal model program for $K_X$.
If $B=0$ and $\bM.=0$, then 
the statement is trivial.
So we may assume that either the boundary divisor or the moduli divisor is non-trivial.
Hence, this minimal model program $X\rightarrow Z$ must terminate with a Mori fiber space $Z\rightarrow C$, where $C$ is either a curve or a point. 
Let $B_Z$ be the push-forward of $B$ on $Z$. 
Then, the generalized log canonical pair
$(Z,B_Z,\bM.)$ is generalized log Calabi--Yau.
By Lemma~\ref{lem:curve-contraction},
there exists a decomposition
$\Sigma_Z$ on $(Z,B_Z,\bM.)$ for which
\begin{equation}\label{eq:comp-comparison}
\hat{c}(Z,B_Z,\bM.;\Sigma_Z) 
\leq \hat{c}(X,B,\bM.;\Sigma).
\end{equation} 
Furthermore, the previous equality holds if and only if all the contracted divisors on 
$X\rightarrow Z$ are generalized log canonical centers of $(X,B,\bM.)$. 
Observe that the property $\rho(\Sigma_Z)=\rho(Z)$ holds.\\ 

\textit{Step 3:} In this step, we show that the outcome of the minimal model program is a Fano type surface and conclude the proof.\\

If $Z$ has Picard rank one, then the statement is obvious.
Hence, we may assume that we have a Mori fiber space $Z\rightarrow C$. 
Assume all the components of $B_Z$ and $\bM Z.$ are horizontal over $C$. Restricting to the general fiber we get a generalized pair
$(\pp^1,B_{\pp^1},\bM \pp^1.)$ of negative complexity, leading to a contradiction.
Hence, we may assume that there is at least one component of $B_Z$ or $\bM Z.$ that is vertical over $C$.
In particular, we have that $C\simeq \pp^1$.
Without loss of generality, we may assume that this is a component of $B_Z$ or $\bM Z.$ that we denote by $B_1$.
On the other hand, there exists a component of $B_Z$ or $\bM Z.$ which is horizontal over $C$.
Without loss of generality, we may assume that this is a component of $B_Z$ that we denote by $B_2$.

We run a $(K_Z+B_z-\epsilon B_1+\bM Z.)$-MMP
that consists of a single step $Z\rightarrow Z_0$.
First, assume that the contraction $Z\rightarrow Z_0$ is birational.
Let $(Z_0,B_{Z_0},\bM .)$ be the induced generalized log Calabi--Yau. 
Note that $Z_0$ is Fano type
as it has Picard rank one and carries
a non-trivial log Calabi--Yau pair structure.
By Lemma~\ref{lem:decomp-contraction}, 
there is a decomposition
$\Sigma_{Z_0}$ for which 
\[
\hat{c}(Z_0,B_{Z_0},\bM.;\Sigma_{Z_0}) \leq 
\hat{c}(Z,B_Z,\bM.;\Sigma_Z) \leq 
\hat{c}(X,B,\bM.;\Sigma). 
\]
By Proposition~\ref{prop:theorem-FT-case},
we conclude that
\[
\hat{c}(Z_0,B_{Z_0},\bM.;\Sigma_{Z_0})\geq 0
\]
proving $(1)$ of the proposition.
On the other hand, 
if $\hat{c}(X,B,\bM.;\Sigma)=0$, then
$\hat{c}(Z_0,B_{Z_0},\bM.;\Sigma_{Z_0})=0.$
Furthermore, by Lemma~\ref{lem:decomp-contraction} all the divisors contracted by
$X\rightarrow Z_0$ are log canonical centers of $(X,B,\bM.)$.
We conclude that $(X,B,\bM.)$ is Fano type.
Thus, $(X,\lfloor B\rfloor)$ is toric by Proposition~\ref{prop:theorem-FT-case}.\\ 

Now, assume that the contraction $Z\rightarrow C'$ induced by $C_0$ is a Mori fiber space.
In this case, the divisor $K_Z+B_Z-\epsilon_1 B_1 -\epsilon_2 B_2+\bM Z.$
is anti-ample. We conclude that $Z$ is Fano type. Then, the argument in the previous paragraph implies that $X$ is Fano type as well.
Hence, the proposition follows from Proposition~\ref{prop:theorem-FT-case}.
\end{proof} 

\subsection{Reduction to the Fano type case} 
\label{subsec:reduct-FT}
In this subsection, we show the first two statements of Theorem~\ref{thm:gen-log-CY-comp}.
More precisely, we prove the following proposition. 

\begin{proposition}\label{prop:part1-2}
Let $(X,B,\bM.)$ be a generalized log Calabi--Yau pair of dimension $2$.
The following statements hold.
\begin{enumerate}
    \item The inequality $\hat{c}(X,B,\bM.)\geq 0$ holds.
    \item If $\hat{c}(X,B,\bM.)=0$, then $(X,\lfloor B\rfloor)$ is a toric pair.
\end{enumerate}
\end{proposition}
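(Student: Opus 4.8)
The strategy is to deduce Proposition~\ref{prop:part1-2} from the two special cases already established: the Fano type case (Proposition~\ref{prop:theorem-FT-case}) and the full-span case (Proposition~\ref{prop:LCY-case-with-full-span}). The point is that for an \emph{arbitrary} generalized log Calabi--Yau surface $(X,B,\bM.)$ with a decomposition $\Sigma$, either the components of $\Sigma$ already span $N^1_\rr(X)$, in which case we are in the situation of Proposition~\ref{prop:LCY-case-with-full-span}, or they span a proper subspace, and then we should be able to modify $X$ birationally so as to increase the span relative to the Picard rank without increasing the generalized complexity.

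\textbf{Key steps.}
First, fix an orbifold structure $n$ and a decomposition $\Sigma = (\Sigma_B, \Sigma_{\bM.})$ realizing (or approximating) $\hat{c}(X,B,\bM.)$; it suffices to bound $\hat{c}(X,B,\bM.;\Sigma)$ below by $0$ and to conclude toricity of $(X,\lfloor B\rfloor)$ when it equals $0$. Second, suppose $\rho(\Sigma) < \rho(X)$, i.e. the components $B_1,\dots,B_k,\pi_*M_1,\dots,\pi_*M_\ell$ fail to span $N^1_\rr(X)$. The plan is to run a suitable minimal model program / sequence of divisorial contractions $X \to X'$ (as in Step~2 of the proof of Proposition~\ref{prop:LCY-case-with-full-span}, invoking Lemma~\ref{lem:decomp-contraction}) that contracts a curve $C$ which is \emph{not} in the span of $\Sigma$; then $\rho(X)$ drops by one, $\rho(\Sigma)$ is unchanged, and $|\Sigma|$ is unchanged, so by Lemma~\ref{lem:decomp-contraction} the generalized complexity does not increase, and in fact $\hat{c}(X',B_{X'},\bM.;\Sigma_{X'}) \le \hat{c}(X,B,\bM.;\Sigma)$ with the gap between $\rho$ and $\rho(\Sigma)$ reduced. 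Iterating, one reaches a model on which either the decomposition spans the Picard group — apply Proposition~\ref{prop:LCY-case-with-full-span} — or the variety has become Fano type — apply Proposition~\ref{prop:theorem-FT-case}. In either case one gets $\hat{c} \ge 0$ downstairs, hence upstairs, proving $(1)$. For $(2)$, if $\hat{c}(X,B,\bM.) = 0$, then equality must propagate down the whole tower: each contraction preserves the value $0$, so by the ``furthermore'' clauses of Lemmas and Propositions every contracted curve is a generalized log canonical center, the terminal model is toric with $\lfloor\cdot\rfloor$ toric, and pulling back along a birational morphism preserves toricity of the pair $(X,\lfloor B\rfloor)$ (the image and preimage of a toric pair under an equivariant birational map is toric, and the contracted divisors being lc centers guarantees they are toric boundary components).

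\textbf{Main obstacle.}
The delicate point is the bookkeeping when $\rho(\Sigma) < \rho(X)$: I must ensure that the MMP or divisorial contraction I run actually contracts a curve \emph{outside} the span $\langle\Sigma\rangle$, rather than one inside it, since contracting a curve whose class lies in $\langle\Sigma\rangle$ could drop $\rho(\Sigma)$ as well and fail to improve the gap. This requires choosing the contraction carefully — e.g. perturbing the polarization by a small multiple of a divisor not in $\langle\Sigma\rangle$, or arguing that an extremal ray not meeting $\langle\Sigma\rangle$ must exist when the span is proper — and checking that the induced decomposition $\Sigma_{X'}$ still dominates $B_{X'}$ and $\bM{X'}.$ after push-forward. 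A secondary subtlety is the case where the MMP terminates in a Mori fiber space rather than a Picard rank one variety; there one must either restrict to the general fiber (ruling out all-horizontal components by a complexity-negativity argument on $\pp^1$, exactly as in Step~3 of Proposition~\ref{prop:LCY-case-with-full-span}) or find a further contraction showing the base is Fano type. Once the reduction is set up correctly, the conclusion is immediate from the two established special cases.
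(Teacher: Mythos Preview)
Your outline for part~(1) is essentially what the paper does: after passing to a $\qq$-factorial dlt modification one runs a $K_X$-MMP, the complexity does not increase along it by Lemma~\ref{lem:decomp-contraction}, and the endpoint is Fano type (either because $\rho=1$ or via the Mori-fiber-space argument of Step~3 in Proposition~\ref{prop:LCY-case-with-full-span}), so Proposition~\ref{prop:theorem-FT-case} gives the bound. The paper packages this as induction on $\rho(X)$ rather than as shrinking the gap $\rho(X)-\rho(\Sigma)$, but for the inequality the two are equivalent.

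Part~(2), however, has a genuine gap, and it sits exactly where you flagged the main obstacle --- but the difficulty is structural, not bookkeeping. The ``furthermore'' clause of Lemma~\ref{lem:decomp-contraction} carries the hypothesis $\rho(\Sigma)=\rho(X)$. When the contracted curve $C$ satisfies $[C]\notin\langle\Sigma\rangle$ (precisely the case you want to arrange in order to close the gap), one has $\rho(\Sigma_Y)=\rho(\Sigma)$ and $|\Sigma_Y|=|\Sigma|$, so $\hat c(Y)=\hat c(X)=0$ holds with $C$ neither a component of $\Sigma$ nor, a priori, a generalized log canonical center. Your assertion that ``every contracted curve is a generalized log canonical center'' therefore does not follow, and without it there is no way to pull toricity back from the terminal model to $X$: a blow-up of a toric surface at a point off the toric boundary is not toric. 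Nor can you sidestep this by instead seeking a contractible extremal curve whose class \emph{does} lie in $\langle\Sigma\rangle$; nothing prevents all $K_X$-negative extremal rays from lying outside a given proper subspace of $N^1_\rr(X)$.

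The paper handles this case by a different and substantially longer argument. After one contraction $X\to Y$ with $(Y,\lfloor B_Y\rfloor)$ toric by induction, and assuming $C$ is not in the support of $\Sigma$, it proves a Claim producing a further contraction $Y\to T$ to a toric surface with $\rho(T)\le 2$ (both extremal rays fibering when $\rho(T)=2$), together with a parallel contraction $X\to X'$ over $T$ built step by step. It then invokes the auxiliary Lemma~\ref{lem:curve-contraction} (which bounds $|\Sigma|\le 2$ for components meeting a contracted curve) and carries out a numerical analysis of how the components of $\Sigma'$ sit relative to the two fibrations of $T$, ultimately forcing either $\rho(\Sigma')=\rho(X')$ --- so that Proposition~\ref{prop:LCY-case-with-full-span} applies to $X'$ --- or $X'\to T$ to be crepant, so that $X'$ is toric directly. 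In either case $X'$ is Fano type and the contracted divisors of $X\to X'$ are log canonical places, whence $X$ is Fano type and Proposition~\ref{prop:theorem-FT-case} finishes. This geometric case analysis, not an MMP bookkeeping reduction, is the heart of the proof of (2).
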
 

We will need the following lemma
regarding contraction of curves.

\begin{lemma}\label{lem:curve-contraction} 
Let $(X,B,\bM.)$ be a generalized log canonical pair of dimension $2$.
Let $X\rightarrow Z$ be the contraction of an irreducible curve $C$.
Assume that $(X,B,\bM.)$ is generalized log Calabi--Yau over the image of $C$.
Let $\Sigma$ be a decomposition of the generalized pair. 
Assume the following conditions:
\begin{itemize}
\item Every component of $\Sigma$ intersects $C$ positively,
\item $C$ is not a component of $\Sigma$.
\end{itemize} 
Then, $|\Sigma|\leq 2$. 
\end{lemma}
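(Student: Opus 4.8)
\textbf{Proof plan for Lemma~\ref{lem:curve-contraction}.}

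The plan is to run a suitable minimal model program over $Z$ that contracts $C$ in a single step, thereby reducing to the situation of a log Calabi--Yau pair over $Z$ whose relative Mori cone is a single ray along which all components of $\Sigma$ are positive, and then exploit the relative version of the complexity/length inequalities. First I would pass to a $\qq$-factorial generalized dlt modification $(Y,B_Y,\bM.)$ of $(X,B,\bM.)$ over $Z$ using Lemma~\ref{lem:dlt-mod}; since we work relatively over $Z$ and over the image of $C$ the pair is generalized log Calabi--Yau, the norm $|\Sigma|$ can only go up or stay the same when we pull back the decomposition and add the exceptional divisors with coefficient one (as in Step~1 of the proof of Proposition~\ref{prop:LCY-case-with-full-span}), so it suffices to bound $|\Sigma_Y|$ from above by $2$ — wait, this goes the wrong way, so instead I would argue directly on $X$ without the dlt modification, or only use it to guarantee $\qq$-factoriality while being careful that the strict transform $\tilde C$ still satisfies the two hypotheses and that the relevant components still meet $\tilde C$ positively.

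The heart of the argument is the following local computation near the contracted curve $C\simeq \pp^1$. Since $(X,B,\bM.)$ is generalized log canonical and generalized log Calabi--Yau over the image point $z=f(C)$, we have $K_X+B+\bM X.\sim_{\rr,Z}0$, hence by adjunction on $C$ (generalized adjunction, \cite{BZ16}) we get
\[
(K_X+B+\bM X.)\cdot C = \deg\!\big(K_C + B_C + \bM C.\big) = 0,
\]
and since $C\simeq\pp^1$ this reads $-2 + \deg(B_C+\bM C.) = 0$, i.e.\ $\deg(B_C+\bM C.)=2$. Now each component $D$ of the decomposition $\Sigma$ appearing in the boundary part contributes to $B_C$: writing $\Sigma_B = \sum_P(1-\tfrac1{n_P})P + \sum_i b_i B_i \le B$, the divisor $\sum_i b_i (B_i\cdot C) + \sum_P (1-\tfrac1{n_P})(P\cdot C)$ is a sub-divisor of $B_C$ of the correct degree $\le 2$; and each component $M_{X,i}$ of the moduli part restricts (after descending $\bM.$ on a suitable model dominating $X$ and pushing forward) to a nonnegative contribution to $\bM C.$, again bounded by the total degree $2$. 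Using the hypothesis that every component of $\Sigma$ meets $C$ positively — so each intersection number $B_i\cdot C$, resp.\ the restricted degree of $M_{X,i}$, is at least $1$ (here one uses that $C$ is not among the components, so no self-intersection enters, and that these are Weil/Cartier enough after passing to the dlt/$\qq$-factorial model so the intersection numbers are positive integers or at least bounded below by a positive constant) — each of the $|\Sigma|$ many coefficients $b_i,\lambda_i$ gets multiplied by something $\ge 1$ in the degree-$2$ bound, so $|\Sigma| = \sum b_i + \sum \lambda_i \le \deg(B_C+\bM C.) = 2$.

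The main obstacle I anticipate is making the positivity statement ``each component meets $C$ positively implies its intersection number with $C$ is at least $1$'' precise: on a singular surface the $B_i$ need not be Cartier, so $B_i\cdot C$ could be a proper fraction, and likewise the restriction $\bM C.$ of the b-nef divisor is defined on a model where $\bM.$ descends and one must track torsion components (those $M_i$ with $M_i|_{S_Y}$ torsion, in the language of Proposition~\ref{prop:gen-comp-under-adj}) which contribute to $\bM C.$ with degree zero yet count in $|\Sigma|$ with their coefficient — except that such components cannot meet $C$ positively in the required numerical sense, which is precisely what the first hypothesis rules out. So the careful bookkeeping is: torsion-on-$C$ moduli components are excluded by hypothesis; non-torsion moduli components and boundary components each restrict to $C$ with degree $\ge 1$ because they are $\qq$-Cartier of bounded index after the dlt modification and meet $C$ positively; summing the generalized adjunction identity $\deg(B_C+\bM C.)=2$ then yields $|\Sigma|\le 2$, which is the claim.
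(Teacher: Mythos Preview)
Your plan has a genuine gap at the step where you assert that each component of $\Sigma$ contributes at least $1$ to the degree on $C$. The components $B_i$ are orbifold Weil divisors and the pushforwards $\pi_*M_i$ are only $\qq$-Cartier on $X$; on a singular surface there is no reason for $B_i\cdot C$ or $(\pi_*M_i)\cdot C$ to be a positive integer. Your proposed fix via a dlt modification does not help: bounded Cartier index $m$ only places the intersection numbers in $\tfrac{1}{m}\zz$, not in $\zz$, and as you yourself noted, passing to a dlt modification and adding the exceptional divisors to the decomposition increases $|\Sigma|$, which goes the wrong way for an upper bound. There is also a prior issue: the generalized adjunction identity $(K_X+B+\bM X.)\cdot C=\deg(K_C+B_C+\bM C.)$ as you have written it only makes sense when $C$ is a component of $B$ with coefficient one; the hypothesis is merely that $C$ is not a component of the decomposition $\Sigma$, which says nothing about the coefficient of $C$ in $B$.

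The paper's argument addresses exactly these difficulties by first arranging that $C$ \emph{does} lie in the boundary with coefficient one. Assuming $|\Sigma|>2$ for contradiction, one may assume (after adding to $B$ the pull-back of a divisor through the image point on $Z$) that there is a generalized log canonical center on $C$; one then extracts a log canonical place $E$ over this center via $\phi\colon Y\to X$, checks via the projection formula that the strict transform $C_Y$ is still extremal with $(K_Y+B_Y)\cdot C_Y<0$, and contracts $C_Y$ to obtain $Y\to X'$. The image $E'\subset X'$ now plays the role of $C$: it has coefficient one in $B'$, every component of the induced decomposition $\Sigma'$ still meets it positively, $E'$ itself is not among those components, and $|\Sigma'|=|\Sigma|$. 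Genuine generalized adjunction to $E'\simeq\pp^1$ then yields a generalized log Calabi--Yau pair on $\pp^1$ with a decomposition of norm exceeding $2$, contradicting the already-established one-dimensional instance of Proposition~\ref{prop:theorem-FT-case}. The reduction to this one-dimensional complexity inequality is precisely what absorbs the orbifold and fractional-index subtleties that your direct intersection computation cannot handle.
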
 

\begin{proof}
Assume that $|\Sigma|>2$.
We may assume that $(X,B,\bM.)$ has a log canonical center on $C$. 
Otherwise, we can pull back an effective divisor passing through the image of $C$ on $Z$ and add it to $B$.
Since we are adding a divisor that is pull-back from $Z$ this does not change
the relative Calabi--Yau condition.
Let $\phi\colon Y\rightarrow X$ be a projective birational morphism that extracts a log canonical place $E$ of $(X,B,\bM.)$ whose log canonical center is contained in $C$. 
Let $B_Y$ be the strict transform 
of $B$ on $Y$ and $C_Y$ be the strict transform of $C$ on $Y$.
By the projection formula, we have that
\[
0 > C^2 = \phi^*(C)\cdot C_Y = (C_Y+\alpha E)\cdot C_Y > C_Y^2,
\]
so $C_Y^2$ is negative and $C_Y$ is an extremal curve.
On the other hand, by the log Calabi--Yau structure, we have that 
\[
0 = (K_Y+B_Y+E+\bM Y.) \cdot C_Y \geq   
(K_Y+B_Y+E) \cdot C_Y > 
(K_Y+B_Y) \cdot C_Y.
\]
We conclude that $C_Y$ is an extremal
$(K_Y+B_Y)$-negative curve.
Then, we may contract $C_Y$.
Let $Y\rightarrow X'$ be the contraction of $C_Y$.
Denote by $E'$ the image of $E$ on $X'$.
Note that $X'\rightarrow Z$ is a projective birational morphism.
Let $B'$ be the push-forward of $B_Y$ to $X'$.
Note that $\rho(X/Z)=1$,
so we conclude that 
$\rho(X'/Z)=1$.
In particular, each component of $\Sigma'$ intersects $E'$ positively.
We may replace $(X,B,\bM.)$ with
$(X',B',\bM.)$ and $C$ with $E'$.
We replace $\Sigma$ with the induced decomposition $\Sigma'$ on $(X',B',\bM.)$.
By doing so, we may assume that 
$C$ appears with coefficient one in $B$. Although, it does not appear in $\Sigma$.
By performing adjunction of $(X,B,\bM.)$ to $C$, we obtain a generalized log Calabi--Yau pair
$(\pp^1,B_{\pp^1},\bM \pp^1.)$ 
and a decomposition $\Sigma_{\pp^1}$ 
for which $|\Sigma_{\pp^1}|>2$.
This contradicts Proposition~\ref{prop:theorem-FT-case}.
Indeed, we get
$c(\pp^1,B_{\pp^1},M_{\pp^1})<0$.
\end{proof}

\begin{proof}[Proof of Proposition~\ref{prop:part1-2}]
Assume we know the statement for $\qq$-factorial klt varieties.
Let $(X,B,\bM.)$ be a generalized pair as in the statement. 
Let $(Y,B_Y,\bM.)$ be a generalized $\qq$-factorial dlt modification.
Note that $Y$ is klt and $\qq$-factorial.
As in the proof of Step 1 in the proof of Proposition~\ref{prop:LCY-case-with-full-span}, we may find a decomposition
$\Sigma_Y$ of $(Y,B_Y,\bM.)$ for which 
\[
\hat{c}(Y,B_Y,\bM.)\leq \hat{c}(X,B,\bM.).
\]
By the $\qq$-factorial klt case, we conclude $\hat{c}(X,B,\bM.)\geq 0$.
If $\hat{c}(X,B,\bM.)=0$, then
$\hat{c}(Y,B_Y,\bM.)=0$ and 
by the $\qq$-factorial klt case
we conclude that $(Y,\lfloor B_Y\rfloor)$ is a toric pair.
This implies that $(X,\lfloor B\rfloor)$ is a toric pair.
Thus, from now on, we need to prove the statement assuming that $X$ is $\qq$-factorial and klt.

We proceed by induction on the Picard rank on $X$.
If $\rho(X)=1$, then the statement follows from Proposition~\ref{prop:theorem-FT-case}.
Indeed, first, note that in this case $X$
is $\qq$-factorial.
If $B=0$ and $\bM X.=0$, then 
$\hat{c}(X,B,\bM.)$ is automatically positive.
On the other hand, if either $B\neq 0$
or $\bM X.\neq 0$, then $K_X$ must be anti-ample and hence $X$ is Fano type.
Thus, from now on, we may assume that $\rho(X)\geq 2$ and the statement holds for varieties with lower Picard rank.

We run a $K_X$-MMP. 
If the MMP starts with a Mori fiber space to a curve, 
then the same argument in Step 3 in the proof of Proposition~\ref{prop:LCY-case-with-full-span}
implies that $X$ is Fano type.
Then, we conclude again by Proposition~\ref{prop:theorem-FT-case}.
Thus, we may assume that the first step of the MMP
is a divisorial contraction $X\rightarrow Y$.
Let $C$ be the contracted curve. 
Let $(Y,B_Y,\bM.)$ be the induced log Calabi--Yau pair structure on $Y$.
Let $\Sigma_Y$ be the decomposition on $(Y,B_Y,\bM.)$ 
obtained by pushing-forward the decomposition $\Sigma$ on $X$.
By Lemma~\ref{lem:decomp-contraction},
we have that 
\begin{equation}\label{eq:ineq-comp}
\hat{c}(Y,B_Y,\bM.;\Sigma_Y)
\leq \hat{c}(X,B,\bM.;\Sigma).
\end{equation} 
This shows the first statement.

Now, assume that $\hat{c}(X,B,\bM.;\Sigma)=0$.
Then, $\hat{c}(Y,B_Y,\bM.;\Sigma_Y)=0$
so $(Y,\lfloor B_Y\rfloor)$ is toric by the induction hypothesis.
If $\Sigma$ contains $C$ in its support, then the previous paragraph implies that $C$ is a log canonical center of $(Y,B_Y,\bM.)$.
In this case, $X$ is Fano type, so
the statement follows from Proposition~\ref{prop:theorem-FT-case}.
Thus, we may assume that $C$ does not appear in $\Sigma$.
Furthermore, we know that in this case
$\rho(\Sigma_Y)=\rho(Y)$.\\

\textit{Claim} There exists a projective birational morphism $Y\rightarrow T$ for which either:
\begin{enumerate}
\item The Picard rank of $T$ equals one; or
\item The Picard rank of $T$ is two and both extremal contractions define Mori fiber spaces.
\end{enumerate}
Furthermore, there exists a projective birational morphism
$X\rightarrow X'$ which contract the strict transforms of all the curves in ${\rm Ex}(Y/T)$.\\

\begin{proof}[Proof of the Claim]
Let $y\in Y$ be the image of $C$ on $Y$.
First, assume that $y$ is contained in a prime component $S_Y$ of $\lfloor B_Y\rfloor$.
We run a $(K_Y+B_Y-S_Y+\bM Y.)$-MMP. 
Let $Y=:T_0\rightarrow \dots \rightarrow T_k$ be the divisorial contractions of this MMP and $T_k\rightarrow Z$ be the Mori fiber space. 
Let $X_0:=X$.
Inductively on $i$, we construct a projective birational morphism $X\rightarrow X_i$ which contracts the strict transforms of all the curves in ${\rm Ex}(Y/T_i)$. 
Assume $X_i$ is already constructed.
Let $(X_i,B_i,\bM.)$ be the induced generalized log Calabi--Yau structure.
Let $\phi_i\colon X_i\rightarrow T_i$ be the projective morphism contracting $C_i$ the strict transform of $C$ on $X_i$.
Let $E_i$ be the irreducible exceptional divisor of $T_i\rightarrow T_{i+1}$.
By Lemma~\ref{lem:decomp-contraction}, the curve $E_i$ is a log canonical center of $(T_i,B_{T_i},\bM T_i.)$.
The curve $E_i$ intersects the strict transform of $S_Y$ in $T_i$.
Thus, the dual complex $\mathcal{D}(T_i,B_{T_i},\bM T_i.)$ is positive dimensional.
Hence, the dual complex
$\mathcal{D}(X_i,B_i,\bM X_i.)$
is also positive-dimensional.
Let $F_i$ be the strict transform of $E_i$ in $X_i$.
Note that $F_i$ is extremal. Indeed, by the projection formula, we have that
\[
0>F_i^2=\phi_i^*(E_i)\cdot F_i \geq F_i^2.
\]
By connectedness of generalized log canonical centers, there exists a component $\Gamma_i$ of $\lfloor B_i\rfloor$ that intersects $F_i$ positively (see, e.g.,~\cite[Theorem 1.1]{FS20}).
Indeed, the strict transform of $S_Y$ on $X_i$ and $F_i$ appear in $B_i$ with coefficient one.
Then, $F_i$ is a $(K_{X_i}+B_i-\Gamma_i+\bM X_i.)$-negative curve.
Thus, we may contract $F_i$ to produce the model $X_{i+1}$.

If $\rho(T_k)=1$, then we are in the first case. 
If $\rho(T_k)=2$ and the second extremal contraction of $T_k$ is a Mori fiber space, then we are in the second case.
Assume that $\rho(T_k)=2$ and the second extremal contraction $T_k\rightarrow T_{k+1}$ of $T_k$ is birational. Let $E_k$ be the irreducible exceptional divisor.
Let $S_k$ be the strict transform of $S$ on $T_k$.
If $E_k\neq S_k$ and $E_k\cap S_k\neq \emptyset$, then we may contract $F_k$ by the same argument of the previous paragraph.
If $E_k\neq S_k$ and $E_k\cap S_k=\emptyset$, then $S_k$ and $E_k$ are sections of a Mori fiber space structure $T_k\rightarrow \pp^1$.
There exists a component of $B_{T_k}$
or $\bM T_k.$ that intersects $E_k$ positively.
Since the exceptional locus of
$X_k\rightarrow T_k$ is disjoint from $E_k$, then there exists a component $\Gamma_k$ of $B_k$ or $\bM X_k.$ that intersects $F_k$ positively.
Then, $F_k$ is an extremal curve and
$(K_{X_k}+B_k+\bM X_k.-\epsilon \Gamma_k)$-negative for $\epsilon>0$ small enough. 
Hence, we may contract the curve $F_k$.

From now on, we assume $E_k=S_k$.
Let $\Sigma_k$ be the induced decomposition on $X_k$
and $\Sigma_{T_k}$ be the induced decomposition on $T_k$.
By Lemma~\ref{lem:curve-contraction}, the sum of the coefficients of the components of $\Sigma_k$, different from $\phi^{-1}_*(S_k)$ and intersecting $C_k$ positively, is at most one.
By restricting to a general fiber of $T_k\rightarrow Z$, we observe that  the sum of the coefficients of the components of $\Sigma_{T_k}$ that intersect $E_k$ trivially is at most one.
Since $|\Sigma_k|=4$, we conclude that there is a component of $\Sigma_k$ intersecting $S_k$ positively.
Let $\Gamma_k$ be such a component.
We conclude that $S_k$ is an extremal
$(K_{X_k}+B_k-\Gamma_k+\bM X_k.)$-negative curve, so we may contract it and obtain the model $X_{k+1}=:X'$.

Finally, assume that $y\in Y$ is not contained in any prime component of $\lfloor B_Y\rfloor$.
Let $Y\rightarrow T$ be any morphism to a toric surface satisfying either $(1)$ or $(2)$.
Then, by Lemma~\ref{lem:decomp-contraction}, we conclude that
${\rm Ex}(Y/T)$ is contained in $\lfloor B_Y\rfloor$.
Thus, we conclude that $y$ is not contained in ${\rm Ex}(Y/T)$.
In this case, it is clear that $X\rightarrow X'$ exists.
\end{proof}

We let $(X',B',\bM.)$ be the induced generalized log Calabi--Yau pair on $X'$. 
Let $\Sigma'$ be the decomposition on $(X',B',\bM.)$ induced by Lemma~\ref{lem:decomp-contraction}.
Then, we have that
$\hat{c}(X',B',\bM.;\Sigma')=0$.
By construction, the push-forward $C'$ of $C$ in $X'$
does not appear in the decomposition $\Sigma'$.
Thus, we have a commutative diagram of log Calabi--Yau pairs with decompositions:
\begin{align*}
\xymatrix{
(X,B,\bM.;\Sigma)\ar[r]\ar[d] &
(X',B',\bM.;\Sigma') \ar[d]^-{\psi}\\
(Y,B_Y,\bM.;\Sigma_Y)\ar[r] &
(T,B_T,\bM.;\Sigma_T). 
}
\end{align*} 
Furthermore, all the decompositions have associated orbifold complexity equal to $0$.
It suffices to show that $X'$ is a toric variety. 
Indeed, if $X'$ is a toric variety, then it is Fano type.
As all the divisors extracted by $X\rightarrow X'$
have log discrepancy $0$ with respect to $(X',B',\bM.)$, this implies that $X$ is of Fano type.
We will proceed in two cases, depending on the Picard rank of $T$.\\

\textit{Case 1:} In this case, we assume that the Picard rank of $T$ is one.\\

In this case, we have that $\rho(X')=2$.
If $\rho(\Sigma')=\rho(X')$, then the statement follows
from Proposition~\ref{prop:LCY-case-with-full-span}.
Hence, we may assume that $\rho(\Sigma')=1$.
In particular, all the divisors that appear in the decomposition of $\Sigma'$ are $\rr$-proportional.
Note that $|\Sigma'|=3$.
If a component of $\Sigma'$ intersects $C'$ positively, then we get a contradiction by Lemma~\ref{lem:curve-contraction}.
Hence, we may assume that no component of $\Sigma'$ intersects $C'$.
Thus, $X'\rightarrow T$ is crepant.
So $X'$ must be a toric variety.\\

\textit{Case 2:} In this case, we assume that the Picard rank of $T$ equals $2$ and both extremal contractions correspond to Mori fiber spaces.\\

First, note that some component of $\Sigma'$ must intersect $C'$ positively, otherwise $\psi\colon X'\rightarrow T$ is crepant and $T$ is toric.
Let $\pi_1\colon T\rightarrow C_1$ and $\pi_2\colon T\rightarrow C_2$ be the two Mori fiber space structures of $T$.
Let $\phi_1\colon X'\rightarrow C_1$ and
$\phi_2\colon X'\rightarrow C_2$ be the two corresponding fibrations.
We prove this case in four steps.\\

\textit{Step 1:} In this step, we show that every component of $\Sigma_T$ is $\rr$-proportional to a fiber of either $\pi_1$ or $\pi_2$.\\

We can write $\Sigma_T=\sum_{i=1}^\ell b_iW_i$ where each $b_i$ is a positive real number and each $W_i$ is a Weil divisor.
Here, the $W_i$'s are the components of $\Sigma_T$.
Since $|\Sigma_T|=4$, we have that
\begin{equation}
\label{eq:sum4}
\sum_{i=1}^\ell b_i=4.
\end{equation} 
The cone of effective divisors of $T$ is spanned by $f_1$ and $f_2$ which are reduced fibers of $\phi_1$ and $\phi_2$ respectively.
In particular, we have 
$W_i\sim a_{i,1}f_1+a_{i,2}f_2$ for each $i$, where the $a_{i,j}$'s are non-negative integers.
On the other hand,
we have that $-K_T\sim 2f_1+2f_2$ by~\cite[Theorem 8.2.3]{CLS11}.
Thus, we conclude that 
\[
\sum_{i=1}^{\ell} b_ia_{i,1} =2
\text{ and }
\sum_{i=1}^{\ell} b_ia_{i,2} =2
\]
Hence, the following equality holds
\begin{equation}\label{eq:total-sum}
\sum_{i=1}^{\ell} b_i(a_{i,1}+a_{i,2}) =4.
\end{equation}
Comparing~\eqref{eq:sum4}
and~\eqref{eq:total-sum}, we conclude that $a_{i,1}+a_{i,2}=1$ for each $i$, which means that each $W_i$ is linearly equivalent to either $f_1$ or $f_2$.\\

\textit{Step 2:} In this step, we show that every component of $\Sigma_{\bM X'.}$ is $\rr$-proportional 
to a fiber of either $\phi_1$ or $\phi_2$.\\

By the first step, every component of $\Sigma_T$ is $\rr$-proportional to a fiber of either Mori fiber space. 
We conclude that every component of $\Sigma'$
is $\rr$-linearly equivalent to a divisor of the form
$\lambda \psi_{*}^{-1}\pi_i^*(p) +\mu C'$ 
for $\lambda \geq 0$, $i\in \{1,2\}$, $p\in \pp^1$, and $\mu \in \rr$. 
A component of $\Sigma_{\bM X'.}$ is nef and
$\rr$-linearly equivalent to 
a divisor of the form
\[
\lambda \psi_{*}^{-1}\pi_i^*(p) +\mu C'
\]
for $\lambda \geq 0, i\in \{1,2\}, p\in \pp^1,$
and $\mu \in \rr$.
Set $q=\phi_i(C')$.
If $p\neq q$ and $\mu>0$, then 
\[
(\lambda \psi_{*}^{-1}\pi_i^*(p) +\mu C')\cdot C' = \mu (C')^2 <0, 
\]
leading to a contradiction.
If $p\neq q$ and $\mu<0$, then 
\[
(\lambda \psi_{*}^{-1}\pi_i^*(p) +\mu C')\cdot 
\psi_{*}^{-1}\pi_i^*(q)<0,
\]
leading to a contradiction.
We conclude that if $p\neq q$, then $\mu=0$ and 
the component of $\Sigma_{\bM X'.}$ is $\rr$-proportional to a fiber of $\phi_i$.
If $p=q$, then we can find $\epsilon\in \rr$ for which the divisor 
\begin{equation}\label{eq:nef-div-rel}
\lambda \psi_{*}^{-1}\pi_i^*(p) +\mu C' - \epsilon \phi_i^*(p), 
\end{equation} 
is not supported on the whole fiber over $p$ with respect to $\phi_i$. 
The divisor~\eqref{eq:nef-div-rel} is nef over $C_i$ if and only if it is trivial. 
Hence, we conclude that the component is $\rr$-equivalent to $\epsilon \phi_i^*(p)$. This finishes the proof of the second step.\\

\textit{Step 3:} In this step, we show that some component of $\Sigma'$ is $\rr$-proportional to a fiber of $\phi_i$ for each $i\in \{1,2\}$.\\ 

We show that some component of $\Sigma'$ is $\rr$-proportional to a fiber of $\phi_1$. 
The other statement is analogous.
If a component of $\Sigma_{B_T}$, that is vertical over $C_1$, does not contain $\psi(C')$ in its support, then we are done.
Thus, we may assume that every component of $\Sigma_{B_T}$, that is vertical over $C_1$, contains $\pi(C')$ on its support. 
In particular, the sum of the coefficients of the components of $\Sigma_{B_T}$, that are vertical over $C_1$, is at most $1$.
This follows from the log canonical condition of the pair $(T,B_T)$.
Thus, by the second step, there is a component of $\Sigma_{\bM X'.}$ that is $\rr$-proportional to a fiber of $\phi_1$. 
Otherwise, the intersection of $K_{X'}$ with a general fiber of $\phi_2$ is larger than $-2$, leading to a contradiction.\\ 

\textit{Step 4:} In this step, we finish the proof of the second case.\\ 

Recall that some component of $\Sigma'$ intersects $C'$ positively.
By the third step, 
$\Sigma'$ contains at least two further components;
one that is $\rr$-proportional to a fiber of $\phi_1$
and one that is $\rr$-proportional to a fiber of $\phi_2$.
We conclude that $\rho(\Sigma')=3$.
By Proposition~\ref{prop:LCY-case-with-full-span}, we have that $X'$ is toric. 
\end{proof}

\subsection{Toric case}
\label{subsec:toric case}
In this subsection, we finish the proof of the main theorem.
In order to do so, it suffices to show $(3)$ and $(4)$ of Theorem~\ref{thm:gen-log-CY-comp}.
We will use the following two lemmas from toric geometry.

\begin{lemma}\label{lem:toric-rho=2}
Let $X$ be a projective toric surface with $\rho(X)=2$.
Assume that $X$ is not a product of projective lines.
There exists a projective birational morphism
$Y\rightarrow X$ extracting at most one divisor
$E$ with $a_E(X)\in (0,1]$
and a projective birational contraction 
$Y\rightarrow Z$ onto a variety of Picard rank $1$.
\end{lemma}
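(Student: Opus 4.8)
The plan is to analyze the two extremal rays of the Mori cone of $X$ and, in the one troublesome case, to blow up a singular point. Since every complete toric surface is $\qq$-factorial and klt, the cone $\overline{NE}(X)$ is strongly convex, rational polyhedral, and two-dimensional, hence spanned by exactly two extremal rays $R_1,R_2$, each generated by a torus-invariant rational curve. First I would run the contraction $\mathrm{cont}_{R_i}\colon X\to Z_i$ of each ray: it drops the Picard rank by one, so $Z_i$ is either a curve --- necessarily $\simeq\pp^1$, as $X$ is toric --- with $\mathrm{cont}_{R_i}$ a Mori fiber space, or a normal surface with $\rho(Z_i)=1$, in which case $\mathrm{cont}_{R_i}$ is birational, hence divisorial because a surface admits no small contractions.

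If at least one of $R_1,R_2$ is of divisorial type, giving a contraction $X\to Z$ onto a surface with $\rho(Z)=1$, then I would simply take $Y=X$: no divisor is extracted over $X$, so the first requirement is satisfied vacuously, and $Y=X\to Z$ is the desired contraction. It therefore remains to treat the case in which both $\mathrm{cont}_{R_1}$ and $\mathrm{cont}_{R_2}$ are Mori fiber spaces $X\to\pp^1$. Among smooth projective toric surfaces of Picard rank $2$ only $\pp^1\times\pp^1$ admits two such fibrations --- the others being the Hirzebruch surfaces $\Sigma_n$, $n\ge 1$, whose negative section spans a divisorial extremal ray --- so the hypothesis that $X$ is not a product of projective lines forces $X$ to be singular. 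I would then pick a torus-fixed singular point $x\in X$ and let $\varphi\colon Y\to X$ extract a single exceptional divisor $E$ over $x$, for instance the first divisor of the minimal resolution of the corresponding cone; then $a_E(X)\in(0,1]$, since $X$ is klt and $x$ is singular. Now $\rho(Y)=3$ and $Y$ is a projective rational toric surface, and running a sequence of divisorial contractions on $Y$ --- choosing at the Picard-rank-$2$ stage a ray whose deletion keeps the two extremal contractions from both being of fiber type --- I reach a toric surface $Z$ with $\rho(Z)=1$, so that $Y\to Z$ is the required birational contraction.

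I expect the last step --- checking at the level of the fan that, after blowing up one singular point of a "fake $\pp^1\times\pp^1$", one can always contract down to Picard rank one --- to be the only real content: it amounts to verifying that among the (at least five) rays of $Y$ there is one that can be deleted so that the resulting coarser fan is still a complete strongly convex fan, and then repeating, together with the observation that deleting the ray lying across the exceptional ray of $\varphi$ destroys the antipodal symmetry that characterizes a fake $\pp^1\times\pp^1$. Everything else is routine two-dimensional Mori theory plus the automatic discrepancy bound $a_E(X)\in(0,1]$ for a divisor extracted over a singular point of a klt toric surface. One could alternatively carry out the whole argument combinatorially, starting from the classification of complete fans with four rays in $\zz^2$.
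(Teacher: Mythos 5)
Your proof follows essentially the same route as the paper: dispose of the case where $X$ already admits a divisorial contraction by taking $Y=X$, observe that otherwise both extremal rays are Mori fiber spaces so $\Sigma(1)=\{\pm v_1,\pm v_2\}$, rule out the smooth case as $\pp^1\times\pp^1$, and then add one ray over a singular cone and contract down. The reduction steps and the discrepancy bound $a_E(X)\in(0,1]$ are all correct.

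The step you flag as ``the only real content'' --- contracting from the blow-up $Y$ down to Picard rank one --- is where your sketch goes wrong. You suggest ``deleting the ray lying across the exceptional ray of $\varphi$,'' but if $v_3\in\operatorname{relint}\langle v_1,v_2\rangle$ is the new ray, the rays across from it are $-v_1$ and $-v_2$, and deleting either one merges two adjacent cones into a half-plane (for instance $\langle v_2,-v_1\rangle\cup\langle -v_1,-v_2\rangle$ becomes $\langle v_2,-v_2\rangle$), which is not strongly convex, so this is not a toric contraction. The correct move is the opposite one, and it is what the paper does: contract the two rays $v_1$ and $v_2$ \emph{adjacent} to $v_3$. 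Each deletion is legitimate because the merged cone spans an angle strictly less than $\pi$ (since $v_3$ sits strictly between $v_1$ and $v_2$), and the result is the complete three-ray fan $\{v_3,-v_1,-v_2\}$, of Picard rank one. That one-line fan computation is exactly what closes the gap you acknowledge leaving open, and it is worth writing out.
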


\begin{proof}
If $X$ itself admits a divisorial contraction, we simply take $X=Y$ and $X\rightarrow Z$ such divisorial contraction.
Hence, assume that $X$ admits no divisorial contractions.
This means that both extremal rays of the cone of curves of $X$ define Mori fiber spaces.
If $\Sigma$ is the fan defining $X$, then
$\Sigma(1)=\{v_1,v_2,-v_1,-v_2\}$. 
If $X$ is smooth, then $X$ is isomorphic to $\pp^1\times \pp^1$.
Thus, we may assume that $X$ is not smooth.
Let $Y\rightarrow X$ be a divisorial contraction that extracts a divisor with log discrepancy in $(0,1]$. 
Then, the exceptional divisor of $Y\rightarrow X$ corresponds to a ray subdivision of the fan $\Sigma$.
Without loss of generality, we may assume that 
the fan of $Y$ is obtained by adding a ray $v_3$ in the relative interior of $\langle v_1,v_2\rangle$. 
Then, we may contract the divisors corresponding to $v_1$ and $v_2$ and obtain a toric variety $Z$ of Picard rank $1$ whose fan corresponds to $\{-v_1,-v_2,v_3\}$. 
\end{proof}

\begin{lemma}\label{lem:toric-rho=1-non-canonical}
Let $X$ be a projective toric surface
of Picard rank one.
Assume that $X$ is not canonical.
Assume that $X$ is not isomorphic to $F_n$.
Then, there exists a projective birational morphism $Y\rightarrow X$ extracting a divisor with $a_E(X)\in (0,1)$ and a projective birational morphism $Y\rightarrow Z$ onto a variety of Picard rank $1$ that does not contract $E$. 
\end{lemma}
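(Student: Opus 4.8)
The plan is to argue directly with the fan $\Sigma$ of $X$ inside the lattice $N\cong\zz^2$. Since $\rho(X)=1$, the fan $\Sigma$ has exactly three rays, with primitive generators $v_1,v_2,v_3$ subject to a primitive relation $a_1v_1+a_2v_2+a_3v_3=0$ with all $a_i>0$, and its maximal cones are $\langle v_1,v_2\rangle,\langle v_2,v_3\rangle,\langle v_1,v_3\rangle$. For a primitive lattice vector $v'\in\relint\langle v_i,v_j\rangle$, writing $v'=\alpha v_i+\beta v_j$ with $\alpha,\beta>0$, the toric prime divisor $E$ over $X$ defined by $v'$ satisfies $a_E(X)=\alpha+\beta$; hence $a_E(X)\in(0,1)$ exactly when $v'$ lies in the interior of the triangle $T_{ij}$ with vertices $0,v_i,v_j$. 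Non-canonicity of $X$ says that at least one such $v'$ exists. Call $v'$ \emph{good} if, in addition, $v'\neq -v_\ell$, where $v_\ell$ is the remaining ray. The two steps are: (i) produce a good $v'$ unless $X\cong F_n$; (ii) from a good $v'$, construct $Y$ and $Z$.

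For step (ii), take a good $v'$ and relabel so that $v'\in\relint\langle v_1,v_2\rangle$; put $v_4:=v'$ and let $Y$ be the star subdivision of $\Sigma$ at $v_4$. Then $Y\to X$ is a projective birational morphism whose only exceptional divisor is $E=D_{v_4}$, with $a_E(X)\in(0,1)$ by construction. The rays of $Y$, in cyclic order, are $v_1,v_4,v_2,v_3$. Because $v'$ is good, $v_3\neq -v_4$, so $\langle v_3,v_4\rangle$ is strongly convex, and exactly one of $v_1,v_2$ lies in its relative interior --- say $v_1$ does. Removing the ray $v_1$ from the fan of $Y$ leaves the strongly convex cones $\langle v_3,v_4\rangle=\langle v_3,v_1\rangle\cup\langle v_1,v_4\rangle$, $\langle v_4,v_2\rangle$, $\langle v_2,v_3\rangle$, which together cover $N_\rr$; this is the fan of a projective toric surface $Z$ with $\rho(Z)=1$, and the induced projective birational morphism $Y\to Z$ contracts $D_{v_1}$ and is an isomorphism elsewhere. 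In particular $Y\to Z$ does not contract $E$, so $(Y,Z)$ is as required. (The reason we need $v'\neq -v_\ell$: if $v_4=-v_3$, then $\langle v_3,v_4\rangle$ degenerates to a line, $Y$ carries a $\pp^1$-fibration over $\pp^1$, and its only birational contraction to a surface of Picard rank $1$ contracts $E$ itself.)

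Step (i) is, I expect, the main obstacle. Suppose no good $v'$ exists, i.e., every primitive lattice point lying in the interior of some $T_{ij}$ equals $-v_\ell$ for the opposite ray $v_\ell$. From the relation $a_1v_1+a_2v_2+a_3v_3=0$ one checks that $-v_\ell\in\relint T_{ij}$ iff $a_i+a_j<a_\ell$, and two of these three inequalities cannot hold simultaneously; so at most one of $-v_1,-v_2,-v_3$ lies in its triangle's interior, say $-v_3\in\relint T_{12}$ while $-v_1\notin\relint T_{23}$ and $-v_2\notin\relint T_{13}$. Then the cones $\langle v_2,v_3\rangle$ and $\langle v_1,v_3\rangle$ are canonical, and $-v_3$ is the \emph{unique} primitive lattice point of $\relint T_{12}$, so every lattice point of $\relint T_{12}$ lies on the ray $\rr_{\geq 0}(-v_3)$. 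A lattice-point computation (via Pick's theorem, or via Hirzebruch--Jung resolutions of the cone) shows that a non-canonical two-dimensional cone with this property is a cyclic quotient singularity of type $\tfrac1n(1,1)$ for some $n\geq 3$. Hence $\langle v_1,v_2\rangle$ is of this type; writing it in coordinates adapted to $-v_3$, the two canonical cones $\langle v_2,v_3\rangle$ and $\langle v_1,v_3\rangle$ are forced to be smooth, and one then identifies $\Sigma$ with the fan of $F_n$, a contradiction. All of this is routine toric geometry except the lattice-point bookkeeping in the last step, on which I would concentrate the effort.
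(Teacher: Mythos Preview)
Your proposal is correct and takes essentially the same route as the paper. Both proofs split on whether one can find a primitive $v'$ with $a_E(X)\in(0,1)$ that is \emph{not} the antipode of the opposite ray; if so, a single star subdivision followed by one contraction produces $Z$, and if not, the fan is forced into the $F_n$ shape. The only organizational difference is that the paper phrases the second case iteratively (extract $v_4=-v_1$, then look for a further exceptional $v_5$ inside a sub-cone $\langle v_2,v_4\rangle$ or $\langle v_3,v_4\rangle$, which is automatically ``good''), whereas you argue by contradiction that the absence of any good $v'$ pins down the cone directly.

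Your ``lattice-point computation'' in step~(i)---that a non-canonical cone whose triangle has all interior lattice points on a single ray is of type $\tfrac1n(1,1)$---is exactly the paper's sub-cone smoothness check in disguise, and is shorter than you anticipate: if $\langle v_1,-v_3\rangle$ were not smooth, any exceptional $w$ of its minimal resolution would satisfy $a_w(X)=a+b(\gamma+\delta)<a+b=a_w(\langle v_1,-v_3\rangle)\leq 1$ (writing $w=av_1+b(-v_3)$ and $-v_3=\gamma v_1+\delta v_2$ with $\gamma+\delta<1$), so $w$ would be a second primitive point in $\relint T_{12}$, contradicting uniqueness. Hence both sub-cones are smooth; taking $-v_3=(1,0)$ forces $v_1=(m,1)$, $v_2=(n,-1)$, and then $\langle v_1,v_3\rangle$ and $\langle v_2,v_3\rangle$ have determinant $\pm1$ automatically. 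No Pick-type count is needed.
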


\begin{proof}
Let $\Sigma(1)=\{v_1,v_2,v_3\}$ be the 
rays of the fan of $X$.
Let $Y_0\rightarrow X$ be a projective birational morphism extracting a divisor $F$ with $a_F(X)\in (0,1)$.
Let $v_4$ be the ray corresponding to $F$ which we may assume to be contained in the relative interior of $\langle v_2,v_3\rangle$.
If $v_4\neq -v_1$, then we can contract a prime divisor corresponding to either $v_2$ or $v_3$. 
Thus, we may assume that $v_4=-v_1$.
Without loss of generality, we may assume that 
$v_1=(-1,0)$.
If the cones $\langle v_2,v_4\rangle$ 
or $\langle v_3,v_4\rangle$ are not smooth, then we can extract a second divisor $G$ with
$a_G(X)\in (0,1)$ whose corresponding ray $v_5$ belongs to either of these cones.
If $v_5\in {\rm relint}(\langle v_2,v_4\rangle)$, then we may contract $v_2$ and $v_4$.
Otherwise, we may contract $v_3$ and $v_4$.
In both cases, we obtain a toric variety of Picard rank one. 
Finally, we may assume that
both cones $\langle v_2,v_4\rangle$ 
and $\langle v_3,v_4\rangle$ are smooth.
Thus, we reduced to the case in which
$v_1=(-1,0),v_4=(1,0),v_2=(m,1)$, and $v_3=(n,-1)$ for some integers $m$ and $n$.
In this case, $X$ is isomorphic to $F_{m+n}$.
\end{proof}

\begin{proof}[Proof of Theorem~\ref{thm:gen-log-CY-comp}]
Part $(1)$ and $(2)$ follow from Proposition~\ref{prop:part1-2}.
We aim to prove $(3)$ and $(4)$.
We will proceed in four cases, depending 
on the Picard rank and the singularities of $X$.
Let $(X,\bM.)$ be a generalized log Calabi--Yau pair
for which $X$ is a projective toric surface. 
We are assuming that $\hat{c}(X,\bM.)=0$.
Let $\Sigma_{\bM.}$ be a decomposition that computes the orbifold complexity.
We may assume that $\rho(\Sigma)=\rho(X)$.\\

\noindent\textit{Case 1:} In this case, we assume that $\rho(X)\geq 3$.\\

If $\rho(X)\geq 3$, then there exists a non-trivial projective birational contraction $X\rightarrow Y$.
Then, the generalized pair
$(Y,\bM.)$ is a generalized log Calabi--Yau pair with negative complexity.
This leads to a contradiction.\\

\noindent\textit{Case 2:} In this case, we assume that $\rho(X)=2$.\\

Assume that $X$ is not isomorphic to $\pp^1\times \pp^1$.
By Lemma~\ref{lem:toric-rho=2}, there exists a projective birational morphism 
$\pi\colon Y\rightarrow X$ extracting at most one divisor $E$ with $a_E(X)\in (0,1]$ and a projective birational contraction $\phi \colon Y\rightarrow Z$ onto a variety of Picard rank $1$.
Write
\[
\pi^*(K_X+\bM X.)=
K_Y+(1-a_E(X))E+\bM Y. 
\]
Then, the pair
$(Y,(1-a_E(X))E,\bM.)$ a generalized log Calabi--Yau pair.
Let $E_Z$ be the push-forward of $E$ in $Z$. 
Then, the generalized pair
\[
(Z,(1-a_E(X))E_Z,\bM.) 
\]
is generalized log Calabi--Yau
with negative orbifold complexity
as $\rho(Z)=1$.
We conclude that if
$\rho(X)=2$, then $X\simeq \pp^1\times\pp^1$.\\

\noindent\textit{Case 3:} In this case, we assume that $\rho(X)=1$ and $X$ is not canonical.\\

We proceed in two subcases depending whether $X$ is isomorphic to $F_n$ or not.\\

\noindent\textit{Case 3.1:}
In this case, we assume that $\rho(X)=1$ and $X$ is not isomorphic to $F_n$.\\ 

By Lemma~\ref{lem:toric-rho=1-non-canonical}, there exists:
\begin{itemize}
\item a projective birational morphism $Y\rightarrow X$ extracting a divisor $E$ with $a_E(X)\in (0,1)$, and 
\item a projective birational morphism $Y\rightarrow Z$ onto a variety $Z$ of Picard rank $1$ that does not contract $E$.
\end{itemize}
Let $(Y,(1-a_E(X))E,\bM.)$
be the log pull-back of
$(X,\bM.)$ to $Y$.
Note that $1-a_E(X)>0$.
Let $E_Z$ be the push-forward of $E$ to $Z$.
Then, we obtain a generalized log Calabi--Yau pair
$(Z,(1-a_E(X))E_Z,\bM.)$ of negative orbifold complexity.
This leads to a contradiction.\\ 

\noindent\textit{Case 3.2:}
In this case, we assume that $X\simeq F_n$.\\ 

Let $(\Sigma_n,\alpha C_0,\bM.)$ be the log pull-back to $\Sigma_n$. 
Here, $C_0$ denotes the $(-n)$-curve.
Let $f$ be a fiber of 
the Mori fiber space structure
$\Sigma\rightarrow \pp^1$.
The cone of effective curves
of $\Sigma_n$ is spanned by $C_0$ and $f$.
Recall that 
\[
-K_{\Sigma_n}\sim 2C_0 + (n+2)f.
\]
A curve in $\Sigma_n$ is nef if and only if it is linearly equivalent to
$\lambda f$ with some positive integer $\lambda$
or it has the form
$aC_0+bf$ with $b\geq na$.
Thus, we may write the push-forward of $\Sigma_{\bM.}$ on $X$ as 
\[
\sum_{i=1}^k\lambda_i(a_iC_0+b_if)
+
\sum_{i=1}^\ell\mu_i(c_if), 
\]
where the $a_i,b_i$, and $c_i$'s are positive integers,
$b_i\geq na_i$ for each $i$, and 
the $\lambda_i$ and $\mu_i$ are positive real numbers.
Thus, we conclude that the following equalities hold:
\begin{align}
\sum_{i=1}^k \lambda_i +
\sum_{i=1}^\ell \mu_i &=3\\
\sum_{i=1}^k \lambda_ia_i+\alpha &=2\\
\sum_{i=1}^k \lambda_ib_i+\sum_{i=1}^\ell \mu_ic_i &= n+2.
\end{align}
By $(3.10)$, $(3.11)$, 
the fact that $c_i\geq 1$,
and the fact that $b_i\geq na_i$, 
we conclude that 
\begin{align}
2-n+\alpha n \geq \sum_{i=1}^\ell \mu_i.
\end{align}
On the other hand, we have that
\begin{align}
2-\alpha \geq \sum_{i=1}^k \lambda_i.
\end{align}
Plugging $(3.12)$ and $(3.13)$ in $(3.9)$, we conclude that 
$2-n+\alpha n+2-\alpha \geq 3$
which implies
$1-\alpha \geq n(1-\alpha)$.
This can only happen if $\alpha=1$.
Thus, $(\Sigma_n,C_0;\bM.)$ is not generalized klt. 
Hence, $(F_n,\bM.)$ is not generalized klt.
This proves $(3)$ and $(4)$ 
in the case that $X$ is not canonical and $\rho(X)=1$.\\

\noindent\textit{Case 4:} In this case, we assume that
$\rho(X)=1$ and $X$ is canonical.\\ 

There are only $5$ canonical toric surfaces of Picard rank one.
This follows from the classification
of toric canonical Fano surfaces in~\cite[Proposition 4.1]{Nil05}.
One of these cases is $\pp^2$.
The case $F_2$ is already proved in the previous case. 
Thus, we proceed in three cases depending on the fan $\Sigma$ of $X$.
In the following three cases, we show that there is no generalized log Calabi--Yau pair structure $(X,\bM.)$ by deriving a contradiction.\\

\noindent\textit{Case 4.1:} In this case, we assume that $\Sigma=\{(-2,1),(1,1),(1,-2) \}$.\\

The minimal resolution $Y$ of $X$ has associated
fan
\[
\Sigma_Y=
\{
(-2,1),
(-1,1), 
(0,1),
(1,1),
(1,0),
(1,-1),
(1,-2),
(0,-1),
(-1,0)
\}.
\]
Consider $Z_1$ be the toric surface with associated fan
$\Sigma_1=\{(-1,1),(1,0),(0,-1)\}$
and $Z_2$ be the toric surface with associated fan 
$\Sigma_2=\{(-1,0),(0,1),(1,-1)\}$.
Then, we have that $Z_1\simeq Z_2\simeq \pp^2$.
If $E$ is any exceptional of $Y\rightarrow X$,
then $E$ is not contracted either on $Z_1$ or $Z_2$.
Assume that $a_E(X,\bM.)<1$ for some $E$ on $Y$.
Assume that the push-forward $E_{Z_1}$ of $E$ on $Z_1$
is a divisor.
Then, the generalized log Calabi--Yau pair
$(Z_1,(1-a_E(X,\bM.))E_{Z_1},\bM.)$ has negative complexity.
This leads to a contradiction.
We conclude that $a_E(X,\bM.)=1$ for every 
$E\subset Y$ prime exceptional over $X$.
Since $a_E(X)=1$, this implies that every component 
of $\bM Y.$ intersects $E$ trivially.
Since $Y$ is smooth, we conclude that each component 
of $\bM X.$ is Cartier. 
Hence, each component of $\bM X.$ is ample Cartier.
This contradicts Kobayashi-Ochiai Theorem for surfaces.\\

\noindent\textit{Case 4.2:}
In this case, we assume that $\Sigma=\{(0,1),(-2,-1),(2,-1)\}$.\\

The minimal resolution $Y$ has associated fan
\[
\Sigma_Y=\{
(0,1),(1,0),(2,-1),(1,-1),(0,-1),(-1,-1),(-2,-1),(-1,0)
\}. 
\]
Consider the toric surfaces $Z_1$ and $Z_2$ given by the fans
$\{(-1,0),(1,-1),(0,1)\}$
and $\{(-1,-1),(1,0),(0,1)\}$.
Then, if $a_E(X,\bM.)<1$ for some $E\in \{(-1,0),(1,0),(-1,-1),(1,-1)\}$, then we can consider the log pull-back of $(X,\bM.)$ to $Y$
and push it forward to either $Z_1$ or $Z_2$ to derive a contradiction.
Indeed, by doing so, we obtain a generalized log Calabi--Yau pair with negative complexity.
We conclude that
$a_E(X,\bM.)=1$ for every $E\in \{(-1,0),(1,0),(-1,-1),(1,-1)\}$
and $a_E(X,\bM.)<1$ for $E=(0,-1)$. 
Let $Z$ be the projective toric surface with associated fan
$\Sigma_Z=\{(-1,0),(0,1),(1,0),(0,-1)\}$.
Then, each component of $\bM Z.$ is nef Cartier.
We write $C_0$ for the exceptional curve of $Z\rightarrow X$.
Let $(Z,\alpha C,\bM.)$ be the pull-back of $(X,\bM.)$ to $Z$.
Note that $Z$ admits a Mori fiber space structure
$Z\rightarrow \pp^1$ for which $C_0$ is a section.
We let $f$ be a fiber of this Mori fiber space. 
Then, the cone of curves of $Z$ is spanned by $C_0$ and $f$. 
A divisor is nef Cartier on $Z$ if and only if 
it is linearly equivalent to either 
$2cf$ for some positive integer $c$
or $2aC_0+2bf$ for some positive integers $a$ and $b$ with $b\geq 2a$.
Then, we can write the push-forward of the decomposition $\Sigma_{\bM.}$ on $Z$ as follows:
\[
\Sigma_{\bM Z.}=
\sum_{i=1}^k \lambda_i(2a_iC_0+2b_if)+
\sum_{i=1}^\ell \mu_i(2c_if).
\]
Hence, we conclude that the following equations hold:
\begin{align}
\sum_{i=1}^k \lambda_i +
\sum_{i=1}^\ell \mu_i &=3\\
\sum_{i=1}^k 2\lambda_ia_i+ \alpha &=2\\
\sum_{i=1}^k 2\lambda_ib_i+\sum_{i=1}^\ell 2\mu_ic_i &= 4.
\end{align}
The first and last equalities lead to a contradiction.\\ 

\noindent\textit{Case 4.3:} 
In this case, we assume that $\Sigma=\{(-1,0),(0,1),(3,-2)\}$.\\ 

The minimal resolution $Y$ has associated fan
\[
\Sigma_Y=\{
(0,1),(1,0),(2,-1),(3,-2),(1,-1),(-1,0)
\}. 
\]
Consider the toric surfaces $Z_1$ and $Z_2$ given by the fans
$\{(-1,0),(0,1),(2,-1)\}$
and $\{(-1,0),(0,1),(1,-1)\}$.
Then, if $a_E(X,\bM.)<1$ for some $E\in \{(1,-1),(1,0)\}$, then we can take the log pull-back of $(X,\bM.)$ to $Y$
and push it forward to either $Z_1$ or $Z_2$ to derive a contradiction.
Indeed, in this case, we obtain a generalized log Calabi--Yau pair of negative orbifold complexity.
We conclude that
$a_E(X,\bM.)=1$ for every $E\in \{(1,0),(1,-1)\}$
and $a_E(X,\bM.)<1$ for $E$
the divisor corresponding to the ray $(2,-1)$. 
Let $Z$ be the projective toric surface with associated fan
$\Sigma_Z=\{(-1,0),(0,1),(3,-2),(2,-1)\}$.
Then, each component of $\bM Z.$ is nef Cartier.
We write $C_0$ for the exceptional curve of $Z\rightarrow X$.
Let $(Z,\alpha C,\bM.)$ be the pull-back of $(X,\bM.)$ to $Z$.
Then, the cone of curves of $Z$ is generated by $f$ and $C_0$, where $f$ is a fiber of the Mori fiber space structure $Z\rightarrow \pp^1$ that passes through an $A_1$ singularity.
A divisor is nef Cartier on $Z$ if and only if 
it is linearly equivalent to either
$2cf$ for some positive integer $c$
or $2aC_0+2bf$ for some positive integers $a$ and $b$ with $b\geq 3a$.
Then, we can write the push-forward of the decomposition $\Sigma_{\bM.}$ on $Z$ as follows:
\[
\Sigma_{\bM Z.}=
\sum_{i=1}^k \lambda_i(2a_iC_0+2b_if)+
\sum_{i=1}^\ell \mu_i(2c_if).
\]
Hence, we conclude that the following equations hold:
\begin{align}
\sum_{i=1}^k \lambda_i +
\sum_{i=1}^\ell \mu_i &=3\\
\sum_{i=1}^k 2\lambda_ia_i+ \alpha &=2\\
\sum_{i=1}^k 2\lambda_ib_i+\sum_{i=1}^\ell 2\mu_ic_i &= 6.
\end{align}
The first and last equalities lead to a contradiction.\\ 

Putting cases $4.1,4.2,$ and $4.3$ together,
we conclude the following. If $\hat{c}(X,\bM.)=1$
and $X$ is a canonical toric surface with $\rho(X)=1$,
then $X$ is either $F_2$ or $\pp^2$.
This concludes the proof.
\end{proof}

Now, we turn to prove the corollary.

\begin{proof}[Proof of Corollary~\ref{introcor:descend-p2}]
The fact that $\sum_{i=1}^k\lambda_i\leq 3$ follows from Theorem~\ref{thm:gen-log-CY-comp}.(1). 
If $\sum_{i=1}^k\lambda_i=3$, then
$B=0$ by Theorem~\ref{thm:gen-log-CY-comp}.(1).
By Theorem~\ref{thm:gen-log-CY-comp}.(3), 
we conclude that $X$ is either isomorphic to
$\pp^2,\pp^1\times\pp^1$ or $X\simeq F_n$.
In the two latter cases, there is a component of $\bM.$ which is not big in the model where it descends.
This follows from the proof of Theorem~\ref{thm:gen-log-CY-comp}.
We conclude that $X\simeq \pp^2$.
By~\cite[Theorem 1.8]{Fil18b}, we conclude that $\bM.$ descends on $\pp^2$.
Indeed, in this case, we know that
each component $\bM \pp^2.$ is a line, 
so its self-intersection is $1$.
\end{proof} 

\section{Generalized complexity for threefold singularities}

In this section, we prove our main theorem regarding 
threefold singularities.
First, we prove the following lemma.

\begin{lemma}\label{lem:lifting-div}
Let $(X,B,\bM.)$ be a generalized pair of dimension $3$ with a decomposition $\Sigma$ for which $\hat{c}(X,B,\bM.;\Sigma)=0$.
Let $S$ be a projective prime component of $\lfloor B\rfloor$ which is normal.
Assume that the restriction of every component of $\Sigma$ to $S$ is not numerically trivial.
Assume that $S$ appears on $\Sigma$ with coefficient $1$.
Then, the restriction of every irreducible component
of $B$ to $S$ is irreducible.
\end{lemma}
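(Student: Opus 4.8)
The plan is to transfer the statement to the surface $S$ by adjunction and then combine the surface complexity bound of Theorem~\ref{thm:gen-log-CY-comp} with the strict refinement of adjunction recorded in Remark~\ref{rem:irreducible-comp}; the argument is short once those tools are in place.

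First I would apply Proposition~\ref{prop:gen-comp-under-adj} to the prime divisor $S$. Its three hypotheses --- that $S$ is normal, that every component of $\Sigma$ has numerically non-trivial restriction to $S$, and that $S$ appears with coefficient $1$ in $\Sigma_B$ --- are exactly the hypotheses of the lemma. This produces a decomposition $\Sigma_S$ of the adjunction pair $(S,B_S,\bM S.)$ with $\hat{c}(S,B_S,\bM S.;\Sigma_S)\le\hat{c}(X,B,\bM.;\Sigma)=0$. Since $(X,B,\bM.)$ is generalized log Calabi--Yau, adjunction gives $K_S+B_S+\bM S.\sim_\qq 0$, so $(S,B_S,\bM S.)$ is a generalized log Calabi--Yau pair of dimension $2$. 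Theorem~\ref{thm:gen-log-CY-comp}.(1) then gives $\hat{c}(S,B_S,\bM S.;\Sigma_S)\ge 0$, so in fact $\hat{c}(S,B_S,\bM S.;\Sigma_S)=0$, and the closing assertion of that theorem forces $\rho(\Sigma_S)=\rho(S)$, i.e.\ the components of $\Sigma_S$ span $N^1_\rr(S)$.

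Next I would argue by contradiction. Suppose some prime component $B_i$ of $B$, which we may regard as one of the components of $\Sigma_B$, has reducible restriction $B_i|_S$. In the construction of $\Sigma_S$ carried out inside the proof of Proposition~\ref{prop:gen-comp-under-adj}, the divisor $B_i|_S$ occurs as a (now reducible) component of the boundary decomposition $\Sigma_{B_S}$, with coefficient the one $B_i$ carries in $\Sigma_B$. Since $\rho(\Sigma_S)=\rho(S)$, Remark~\ref{rem:irreducible-comp} yields a decomposition $\Sigma_S'$ of $(S,B_S,\bM S.)$ with $\hat{c}(S,B_S,\bM S.;\Sigma_S')<\hat{c}(X,B,\bM.;\Sigma)=0$, contradicting Theorem~\ref{thm:gen-log-CY-comp}.(1). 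Hence $B_i|_S$ is irreducible for every prime component $B_i$ of $B$, as claimed.

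I expect the delicate point to be this last step: one must verify that Remark~\ref{rem:irreducible-comp} genuinely produces a strict drop of the orbifold complexity, i.e.\ that when $B_i|_S$ breaks into two or more prime divisors, these can legitimately be fed into a new decomposition of $(S,B_S,\bM S.)$ so that $|\Sigma_S'|$ strictly exceeds $|\Sigma_S|$ with no compensating increase of $\rho(\Sigma_S')$ --- and this is exactly where the full-span equality $\rho(\Sigma_S)=\rho(S)$ is used. A secondary nuisance is the bookkeeping inherited from Proposition~\ref{prop:gen-comp-under-adj} --- the orbifold structure $m$ on $S$, the set $\mathcal{S}$ of components of $\lfloor B_S\rfloor$ produced by pairs of indices with $n_P=2$, and the correction terms ${\pi_S}_*(E_i|_{S_Y})$ coming from the moduli components whose restriction to $S$ is torsion --- which has to be tracked carefully enough to confirm that $B_i|_S$ really appears in $\Sigma_{B_S}$ as a component, so that Remark~\ref{rem:irreducible-comp} applies to it.
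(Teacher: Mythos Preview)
Your proposal is correct and follows essentially the same route as the paper: apply Proposition~\ref{prop:gen-comp-under-adj} to obtain $\Sigma_S$ with $\hat c\le 0$, invoke Theorem~\ref{thm:gen-log-CY-comp} to get $\hat c=0$ and $\rho(\Sigma_S)=\rho(S)$, and conclude via Remark~\ref{rem:irreducible-comp}. Your write-up is in fact more explicit than the paper's (which compresses steps (1)--(3) into two sentences and has a mislabelled reference), and your closing paragraph correctly isolates the one genuinely delicate point, namely that Remark~\ref{rem:irreducible-comp} is stated for components $B_i$ of the decomposition $\Sigma_B$ rather than for arbitrary prime components of $B$; the paper glosses over this identification just as you do.
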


\begin{proof}
By Proposition~\ref{prop:gen-comp-under-adj}, we may find a decomposition
$\Sigma_S$ of $(S,B_S,\bM S.)$ with 
$\hat{c}(S,B_S,\bM S.;\Sigma_S)=0$.
By Theorem~\ref{prop:gen-comp-under-adj}, we conclude
that $\rho(\Sigma_S)=\rho(S)$.
Then, the statement follows from Remark~\ref{rem:irreducible-comp}.
\end{proof}

\begin{proof}[Proof of Theorem~\ref{introthm:3-fold-sing}]
Let $(X;x)$ be a klt $3$-fold singularity.
Let $(X,B,\bM.;x)$ be a generalized log canonical singularity.
Let $\Sigma$ be a decomposition
of this generalized pair
for which $c(X,B,\bM.;x)<0$.
Without loss of generality, 
we may assume that $(X,B,\bM.;x)$ is generalized log
canonical
and $\{x\}$ is a generalized log canonical center.
Let $Y\rightarrow X$ be a plt blow-up for $(X;x)$
for which the exceptional divisor
is a log canonical place of $(X,B,\bM.;x)$.
We let $E$ be the exceptional divisor
of $Y\rightarrow X$.
Let $(Y,B_Y+E,\bM.)$ be the log pull-back of
$(X,B,\bM.;x)$ to $Y$.
Let $\Sigma_Y$ be the induced decomposition
on $(Y,B_Y+E,\bM.)$ obtained by adding
with coefficient $1$ the divisor $E$ to the boundary decomposition.
Then, we have that
\[
c(Y,B_Y+E,\bM.;\Sigma_Y)
\leq 
c(X,B,\bM.;\Sigma).
\]
Let $(E,B_E,\bM E.)$ be the generalized pair
obtained by adjunction
of $(Y,B_Y+E,\bM.)$ to $E$.
By Proposition~\ref{prop:gen-comp-under-adj}, 
we know that there exists a
decomposition
$\Sigma_E$ of $(E,B_E,\bM E.)$ for which
\[
\hat{c}(E,B_E,\bM E.)
\leq
c(Y,B_Y+E,\bM.;\Sigma_Y).
\] 
Note that $(E,B_E,\bM.E)$
is a generalized log Calabi--Yau surface.
By Theorem~\ref{thm:gen-log-CY-comp},
we know that
\[
\hat{c}(E,B_E,\bM E.)\geq 0.
\]
This leads to a contradiction.
We conclude that
$c(X,B,\bM.;\Sigma)\geq 0$.

Now, assume that $c(X,B,\bM.;\Sigma)=0$.
In the previous construction, we obtain
that $\hat{c}(E,B_E,\bM E.)=0$.
By Theorem~\ref{thm:gen-log-CY-comp}, 
we conclude that $E$ is a projective toric variety.
Let $\Sigma_{\bM E.}=\sum_{i=1}^k \lambda_i M_i$ 
be the decomposition of the moduli divisor $\bM E.$.
We write $M_{E,i}$ for the push-forward of $M_i$ on $E$.
By Step 3 in the proof of Proposition~\ref{prop:theorem-FT-case}, we conclude that for each 
$i\in \{1,\dots,k\}$ there exists a boundary divisor
$\Gamma_i\sim M_{E,i}$ satisfying the following.
The pair $(E,B_E+\sum_{i=1}^k \lambda_i \Gamma_i)$ is log Calabi--Yau.
Furthermore,
we have that
$\hat{c}(E,B_E+\sum_{i=1}^k \lambda_i\Gamma_i)=0$.
By~\cite[Theorem 4.5]{MS21}, every prime torus invariant divisor
of $E$ is either a component of $B_E$
or a component of $\sum_{i=1}^k \lambda_i\Gamma_i$.
For a prime divisor $F$ of $E$, we denote by $n_F$ the Cartier index of $E$ at $F$.
By Lemma~\ref{lem:lifting-div}, we conclude that for every prime torus invariant prime divisor $F$ of $E$, there either exists:
\begin{itemize}
    \item a component $B_{Y,i}$ of $B_Y$ for which
    $B_{Y,i}|_E = \frac{F}{n_F}$, or 
    \item a component ${\bM Y,i.}$ of $\bM Y.$ for which
    ${\bM Y,i.}|_E\sim \frac{F}{n_F}$.
\end{itemize}
Note that $(\bM Y,i.-E)-K_Y$ is ample over $X$. 
Thus, by Kawamata-Viehweg vanishing, we conclude
that there exists $0\leq G_i \sim \bM Y,i.$ for which
$G_i|_E = \frac{F}{n_F}$.
By~\cite[Lemma 5.1]{MS21}, every divisor $F$ with $n_F>1$ is torus invariant. 

Up to reordering the divisors, 
we may assume that the restrictions of 
$B_{Y,1},\dots,B_{Y,s}$
and 
$G_1,\dots,G_r$ 
are all the prime torus invariant divisors of $E$.
By~\cite[Theorem 4.5]{MS21}, we may assume that each
prime component of $\lfloor B\rfloor$
appears among the divisors $B_{Y,1},\dots,B_{Y,s}$.
By inversion of adjunction,
we conclude that the pair
\begin{equation}\label{eq:lcy-pair}  
\left(
Y,\sum_{i=1}^s B_{Y,i} +\sum_{i=1}^r G_i +E
\right)
\end{equation} 
is log Calabi--Yau over $X$.
Furthermore, the components
$B_{Y,1},\dots, B_{Y,s}, G_1,\dots,G_r$
span a space of dimension at most $r+s-n-1$.
Thus, the log Calabi--Yau pair~\ref{eq:lcy-pair}
has complexity $0$.
Let $(X,\sum_{i=1}^s B_{X,i}+\sum_{i=1}^r G_{X,i};x)$
be the push-forward of~\eqref{eq:lcy-pair} to $X$.
Then, this log pair has complexity $0$
and $\lfloor B\rfloor \subset \supp(\sum_{i=1}^s B_{X,i})$.
By~\cite[Theorem 1]{MS21}, we conclude that $(X,\lfloor B\rfloor)$ is formally toric on a neighborhood of $x$.
\end{proof}

\section{A local version of Kobayashi-Ochiai Theorem}

In this section, we prove a local version of Kobayashi-Ochiai Theorem.
First, we prove a version of Kobayashi-Ochiai theorem
for generalized pairs.

\begin{proof}[Proof of Theorem~\ref{introthm:KO-global}]
Assume that $\sum_{i=1}^k \lambda_i \geq n+1$.
First, we show that the divisor
$K_{X'}+\bM X'.$ is a nef divisor.
First, note that $\bM X'.$ is an ample divisor, 
so every $(K_{X'}+\bM X'.)$-negative curve must be
$K_{X'}$-negative.
By the cone Theorem, every $K_{X'}$-negative
extremal ray is generated by the class of an integral curve 
$C$ which satisfies $K_{X'}\cdot C \geq -(n+1)$.
Thus, we conclude that $(K_{X'}+\bM X'.)\cdot C\geq 0$.
Thus, if $\sum_{i=1}^k \lambda_i \geq n+1$,
then $(K_{X'}+\bM X'.)$ is nef
and if $\sum_{i=1}^k \lambda_i > n+1$,
then $(K_{X'}+\bM X'.)$ is ample.

Let $\pi\colon X'\rightarrow X$ be the associated
projective birational morphism.
Write
\[
\pi^*(K_X+B+\bM X.)= K_{X'}+B'+\bM X'.
\]
By the negativity lemma, we conclude that $B'$ is an effective divisor.
Then, the statement follows from~\cite[Corollary 1.3]{FM21}. 
\end{proof}

\begin{proof}[Proof of Theorem~\ref{introthm:KO-local}]
The first statement follows from Theorem~\ref{introthm:3-fold-sing}.
Hence, we may assume that $\sum_{i=1}^k \lambda_i=3$.
Since $X$ is $\qq$-factorial, 
the exceptional locus of
$\pi\colon X'\rightarrow X$ is divisorial.
Let $C$ be a $K_{X'}$-negative extremal curve over $X$.
Let $X'\rightarrow Y$ be the induced contraction.
Since $X'$ is smooth, this contraction
must be birational.
By~\cite[Theorem 1.32]{KM98}, we know that its exceptional locus $E\subset X'$ is smooth.
By adjunction to $E$, we conclude that 
$K_{X'}+\bM X'.$ intersects $C$ positively.
Indeed, $K_{X'}+E+\bM X'.$ is $C$-positive
and $E\cdot C<0$. 
We deduce that $K_{X'}+\bM X'.$ is ample over $X$.
By the negativity lemma, we conclude that
\[
\pi^*(K_X+B+\bM X.)= K_{X'}+B'+\bM X'.,
\]
satisfies that $B'$ is effective.
Furthermore, the divisor $B'$ must contain every exceptional divisor of $X'\rightarrow X$ on its support. 
Let $\alpha$ be the coefficient of $E$ on $B'$.
We know that the curve
$C$ intersects $K_{X'}+E+\bM X'.$ non-negatively.
Hence, it must intersect 
\[
K_{X'}+E+\bM X'. \sim_{\qq} (1-\alpha)E - (B'-\alpha E).
\]
The curve $C$ intersects $(B'-\alpha E)$ non-negatively and $E$ negatively.
We conclude that $\alpha=1$. 
Thus, $E$ appears in $B'$ with coefficient one. 
Hence, the generalized pair
\[
K_E+B_E+\bM E. \sim (K_X+B'+\bM X'.)|_E,
\]
is generalized log Calabi--Yau.
By Theorem~\ref{introthm:KO-global}, 
we conclude that the following conditions hold:
\begin{itemize}
\item $E$ is isomorphic to $\pp^2$, 
\item the divisor $B_E$ is trivial, and 
\item the generalized pair $(E,\bM E.)$ is generalized terminal.
\end{itemize} 
In particular, no component of $B'_{>0}$ (different than $E$) intersects $E$. 
This implies that $E={\rm Ex}(X'\rightarrow X)$.
Hence, we have that $X'\rightarrow X$ is a resolution of singularities
and its exceptional $E\simeq \pp^2$.
This proves the statement. 
\end{proof}

\section{Examples and Questions}

In this section, we collect some examples and questions.
We start showing that the conditions
on the local Kobayashi-Ochiai theorem are optimal.

\begin{example}
\label{ex:big-nef}
{\em 
In this example, we show that the statement of Theorem~\ref{introthm:KO-local}
fails in dimension $2$ if we drop
the ampleness condition. 
Let $(T;t)$ be any toric surface singularity.
Let $B_1$ and $B_2$ be the two torus invariant divisors
through $t$.
Let $Y\rightarrow T$ be a minimal resolution. 
Write $B_{Y,1}$ and $B_{Y,2}$ be the strict transforms
of $B_1$ and $B_2$, respectively.
Set $\bM Y.:= B_{Y,1}+B_{Y,2}$
be a b-nef divisor.
Then, we have that
$(T,\bM .; t)$ is generalized log canonical.
Furthermore, by construction,
each component of $\bM.$ is big and nef on the smooth model where they descend.
However, $(T;t)$ is not the cone over a rational curve.

Now, we turn to show that the statement
of Theorem~\ref{introthm:KO-local}
fails in dimension $3$ if we drop the ampleness condition. 
Let $(T;t)$ be a toric $3$-fold singularity
such that 
$T\simeq \mathbb{A}^1 \times X$
where $X$ is a toric surface. 
Let $B_1,B_2$, and $B_3$ be the torus invariant divisors of $T$.
Let $Y\rightarrow T$ be a minimal resolution of $(T;t)$.
Then, the strict transforms
$B_{T,1},B_{T,2}$, and $B_{T,3}$
are nef and big over $T$.
We set $\bM Y.:=\sum_{i=1}^3 B_{T,i}$. 
Hence, the pair $(T,\bM.;t)$ is generalized log canonical,
$|\bM.|=3$.
However, $T$ is not a cone over $\pp^2$.
}
\end{example} 

The following is the simplest generalized log Calabi--Yau structure $(F_n,\bM.)$ on $F_n$
with generalized complexity $0$.
Recall that, by Theorem~\ref{thm:gen-log-CY-comp}, this pair must be generalized log canonical
but not generalized klt.

\begin{example}
\label{ex:Fn}
{\em 
Consider the Hirzebruch surface 
$\Sigma_n$
with its Mori fiber space
structure
$\Sigma_n\rightarrow \pp^1$.
Let $S_0$ be the section with self-intersection $-n$.
Let $S_1$ be the section with self-intersection $n$.
Let $F_0$ and $F_1$ be the two torus invariant fibers. 
Consider the boundary divisor
\[
B_{\Sigma_n}:=S_0.
\]
Consider the b-nef divisor defined by
\[
\bM.:= F_0+F_1+S_1.
\]
Note that each of the divisors
$F_0,F_1$ and $S_1$ are nef Cartier. 
Furthermore, the divisor $S_1$ is big and nef.
However, the divisors
$F_0$ and $F_1$ are not big.
Let $(F_n,\bM.)$ be the induced generalized log Calabi--Yau pair structure induced on $F_n$.
Then, we have that 
$|\bM.|=3$, so the generalized complexity is $0$.
}
\end{example}

The following example shows
that in Theorem~\ref{introthm:KO-global}, if we weaken the ampleness hypothesis, 
then the moduli divisor may not descend on $X$ itself.

\begin{example}
\label{ex:not-descend-on-P^2}
{\em
In this example, we show that 
even if $X\simeq \pp^2$
and $(X,\bM.)$ has generalized complexity $0$, then $\bM.$ may not descend on $X$ itself.
This shows that the conditions
on Corollary~\ref{introcor:descend-p2}
are optimal.

Consider $X=\pp^2$ and $\pi\colon Y\rightarrow X$ be the blow-up of $X$ at a point $x$.
Let $E$ be the exceptional divisor.
Let $L$ be the strict transform of a line passing through $x$.
Let $L_1$ and $L_2$ be two lines that do not pass through $x$.
We identify $L_1$ and $L_2$ with their strict transforms on $Y$.
Consider the divisors
$L_1,L_2,L$.
Note that these three divisors are nef Cartier divisors on $Y$.
However, the divisor $L$ is not big.
We consider the b-nef divisor
$\bM.:=L_1+L_2+L$.
Note that $L$ does not descend on $X$.
Indeed, we have that 
$\pi^*\pi_*L=L+E$. 
The generalized pair
$(X,\bM.)$ is generalized log Calabi--Yau and generalized klt.
}
\end{example}

We finish the article with two questions that can lead to further research.

In the statement of Conjecture~\ref{conj:gen-complexity}, it is expected that $(X,\bM.)$ generalized klt log Calabi--Yau of generalized complexity $0$ implies that $X$ is a product of projective spaces.
However, the statement of Theorem~\ref{introthm:surface} shows that this statement already fails if we replace gklt with glc. 
At any rate, something interesting still happens in the $F_n$ example.
There exists a generalized dlt modification of $(F_n,\bM.)$ which is again toric and on which $\bM.$ descends.
However, we are not certain about which toric varieties admit generalized complexity $0$ structures. The following seems like a natural question: 

\begin{question}
{\em 
Can we classify projective toric $3$-folds $T$ for which $(T,\bM.)$ is generalized log Calabi--Yau with complexity $0$?
}
\end{question}

Conjecture~\ref{conj:gen-complexity} connects two important theorems in the literature: Kobayashi-Ochiai and 
the characterization of toric varieties.
There is a third conjecture in the literature, the generalized Mukai conjecture, which seems very similar to the aforementioned theorems.
The generalized Mukai conjecture asserts that for a $d$-dimensional Fano variety, we have that:
\[
\rho(X)(i_X-1)\leq d,
\]
where $i_X$ is the index of the Fano variety.
This invariant is defined as the minimum anti-canonical degree of a curve on $X$.
Furthermore, it is expected that the equality only holds for 
$(\pp^{i_X-1})^\rho$.
We expect the index to behave quite similarly to $\hat{c}(X,\bM.)$ for a generalized log Calabi--Yau structure.
Thus, it is natural to expect an inequality of the form
\[
\rho(X)(\hat{c}(X,\bM.)-1)\leq d
\]
for generalized klt log Calabi--Yau pairs. This leads to the question: 

\begin{question}
{\em
Is it possible to enrich Conjecture~\ref{conj:gen-complexity} so that it also implies the generalized Mukai conjecture? 
}
\end{question}

\bibliographystyle{habbvr}
\bibliography{bib}

\end{document}